\newcommand{\tensor}{\otimes}
\newcommand{\colim}{\operatorname{colim}}
\newcommand{\Spec}{\operatorname{Spec}}
\newcommand{\isomto}{{\stackrel{\sim}{\;\longrightarrow\;}}}
\newcommand{\isomt}{{\stackrel{{\scriptscriptstyle{\sim}}}{\;\rightarrow\;}}}
\newcommand{\sma}{{\scriptstyle{\wedge}}}
\newcommand{\Singaone}{\operatorname{Sing}^{\aone}\!\!}
\renewcommand{\hom}{\operatorname{Hom}}
\newcommand{\Z}{{\mathbb Z}}
\newcommand{\N}{{\mathbb N}}
\newcommand{\aone}{{\mathbb A}^1}
\newcommand{\gm}[1]{{{\mathbf G}_{m}^{#1}}}
\newcommand{\MW}{\mathrm{MW}}
\newcommand{\ho}[1]{\mathscr{H}({#1})}
\newcommand{\hop}[1]{\mathscr{H}_{\bullet}({#1})}
\newcommand{\bpi}{\bm{\pi}}
\newcommand{\Nis}{\operatorname{Nis}}
\newcommand{\CH}{{\widetilde{CH}}}
\newcommand{\Sm}{\mathrm{Sm}}
\newcommand{\map}{\mathrm{Map}}
\newcommand{\Spc}{\mathrm{Spc}}
\newcommand{\Spt}{\mathrm{Spt}}
\newcommand{\K}{{{\mathbf K}}}
\newcommand{\F}{{\mathcal F}}
\newcommand{\id}{\mathrm{Id}}
\newcommand{\Laone}{{\mathrm L}_{\aone}}
\newcommand{\Addresses}{{
 \bigskip
 \footnotesize

 A.~Asok, Department of Mathematics, University of Southern California, 3620 S. Vermont Ave.,
  Los Angeles, CA 90089-2532, United States; \textit{E-mail address:} \url{asok@usc.edu}

  \medskip

 J.~Fasel, Institut Fourier - UMR 5582, Universit\'e Grenoble Alpes, CS 40700, F-38058 Grenoble; France; \textit{E-mail address:} \url{jean.fasel@gmail.com}
 
 \medskip
 
 M.K.~Das, Stat-Math Unit, Indian Statistical Institute, 203 B. T. Road, Kolkata 700108 India; \textit{E-mail address:} \url{mrinal@isical.ac.in}

}}
\newcounter{intro}
\theoremstyle{plain}
\newtheorem{thm}{Theorem}[subsection]
\newtheorem{lem}[thm]{Lemma}
\newtheorem{cor}[thm]{Corollary}
\newtheorem{prop}[thm]{Proposition}
\newtheorem*{claim*}{Claim}  
\newtheorem*{thm*}{Theorem}
\newtheorem*{problem*}{Problem}
\newtheorem{thmintro}{Theorem}
\theoremstyle{definition}
\newtheorem{defn}[thm]{Definition}
\newtheorem{construction}[thm]{Construction}
\theoremstyle{remark}
\newtheorem{rem}[thm]{Remark}
\newtheorem{remintro}[thmintro]{Remark}
\numberwithin{equation}{section}
\begin{document}
\pagestyle{fancy}
\renewcommand{\sectionmark}[1]{\markright{\thesection\ #1}}
\fancyhead{}
\fancyhead[LO,R]{\bfseries\footnotesize\thepage}
\fancyhead[LE]{\bfseries\footnotesize\rightmark}
\fancyhead[RO]{\bfseries\footnotesize\rightmark}
\chead[]{}
\cfoot[]{}
\setlength{\headheight}{1cm}

\author{Aravind Asok\thanks{Aravind Asok was partially supported by National Science Foundation Awards DMS-1254892 and DMS-1802060.} \;\;\;\;\;\;\; \& \;\;\;\;\;\;\; Jean Fasel \\ \small{\MakeLowercase{with an appendix by} Mrinal Kanti Das}
}
\title{{\bf Euler class groups and motivic stable cohomotopy}}
\date{}
\maketitle

\begin{abstract}
We study maps from a smooth scheme to a motivic sphere in the Morel--Voevodsky $\aone$-homotopy category, i.e., motivic cohomotopy sets.  Following Borsuk, we show that, in the presence of suitable hypotheses on the dimension of the source, motivic cohomotopy sets can be equipped with functorial abelian group structures.

We then explore links between motivic cohomotopy groups, Euler class groups \`a la Nori--Bhatwadekar--Sridharan and Chow--Witt groups. We show that, again under suitable hypotheses on the base field $k$, if $X$ is a smooth affine $k$-variety of dimension $d$, then the Euler class group of codimension $d$ cycles coincides with the codimension $d$ Chow--Witt group; the identification proceeds by comparing both groups with a suitable motivic cohomotopy group.  As a byproduct, we describe the Chow group of zero cycles on a smooth affine $k$-scheme as the quotient of the free abelian group on zero cycles by the subgroup generated by reduced complete intersection ideals; this answers a question of S. Bhatwadekar and R. Sridharan.
\end{abstract}

\begin{footnotesize}
\tableofcontents
\end{footnotesize}

\section*{Introduction}
Suppose $k$ is a field, and $X$ is a smooth affine $k$-scheme.  The goal of this paper is to establish concrete connections between ``obstruction groups" attached to $X$ in the sense of M. Nori--S. Bhatwadekar--R. Sridharan (e.g., Euler class or weak Euler class groups) and ``motivic groups" attached to $X$ (e.g., Chow or Chow--Witt groups).  In brief, we will (i) show that obstruction groups can be interpreted as ``cohomotopy groups" and (ii) using an algebro-geometric analog of the classical comparison between cohomotopy and cohomology groups, identify ``obstruction groups" with motivic groups in a number of cases.

We begin by discussing the historical setting for the problems we consider.  Suppose $X$ has dimension $d$ and $\mathscr{E}$ is a rank $d$ vector bundle on $X$.  By means of the dictionary of Serre \cite{Serre55}, one views this setup as analogous to that of a rank $d$ {\em real} vector bundle over a smooth manifold of dimension $d$.  In the topological setup, there is precisely one cohomological obstruction to writing a given rank $d$ vector bundle as a Whitney sum of a bundle of rank $d-1$ and trivial bundle of rank $1$, i.e., the triviality of the Euler class (taking values in integral cohomology, twisted by an orientation character associated with the bundle) \cite{Milnor74}.

In parallel with the situation in topology, one would like to develop a theory of Euler classes in algebraic geometry allowing one to answer the question: what is the (primary) obstruction to $\mathscr{E}$ splitting as the sum of a rank $d-1$ vector bundle and a trivial bundle of rank $1$?  When $k$ is algebraically closed, this question was completely answered by M.P. Murthy \cite{Murthy94}, building on earlier work \cite{MurthySwan,Mohan82}: under these hypotheses, $\mathscr{E}$ splits if and only if $0 = c_d(\mathscr{E}) \in CH^d(X)$.  If $k$ is not algebraically closed, it has been known for a long time that triviality of the top Chern class does not necessarily guarantee existence of a splitting (e.g., the tangent bundle on the real algebraic sphere of dimension $2$ provides a counterexample).

In response to the splitting problem above, different ``Euler class theories" have been constructed.  Motivated by the work of Murthy and following ideas of M.V. Nori, S. Bhatwadekar and R. Sridharan \cite{Bhatwadekar99, Bhatwadekar00, Bhatwadekar02} developed what we will call an ``algebraic" approach to Euler classes.  They gave an explicit description, using generators and relations, of an Euler class group $\mathrm{E}^d(X)$.  Under suitable assumptions, this group houses an Euler class that provides an obstruction to splitting.  More precisely, if $\mathscr{E}$ is an oriented vector bundle (i.e., an algebraic vector bundle on $X$ equipped with a fixed trivialization of the determinant), then there is an associated class $e(\mathscr{E}) \in \mathrm{E}^d(X)$ whose vanishing is equivalent to $\mathscr{E}$ splitting as a sum of an oriented vector bundle of rank $d-1$ and a trivial bundle of rank $1$.

J. Barge and F. Morel gave a ``cohomological" version of the Euler class theory on smooth affine schemes: they introduced Chow--Witt groups and an Euler class for any oriented vector bundle taking values in the top dimensional Chow--Witt group \cite[\S 2.1]{Barge00}.  Establishing the link with the splitting problem is more difficult: Barge--Morel proved that their Euler class governed the splitting problem for varieties of dimension $\leq 2$ \cite[Th\'eor\`eme 2.3]{Barge00} and conjectured that the same result held for higher dimensional smooth affine varieties \cite[\S 2.1 Conjecture]{Barge00}.  In contrast, the approach of Barge--Morel was much more computationally satisfying: Chow--Witt groups do underlie a cohomology theory on smooth schemes having a number of good formal properties.

Barge and Morel observed that there is a comparison map from the Bhatwadekar--Sridharan Euler class groups to Chow--Witt groups \cite[Remarque 2.4]{Barge00} when both groups are defined.  However, beyond this link, the two theories could not be more different.  The Bhatwadekar--Sridharan approach has the benefit of being very explicit and thus making it relatively easy to see that vanishing of the Euler class does in fact control the splitting problem (and moreover gives a theory for possibly singular affine schemes).  However, the groups $\mathrm{E}^d(X)$ are, practically speaking, very difficult to compute: for example, the groups $\mathrm{E}^d(X)$ do not obviously underlie a cohomology theory.

The Barge--Morel conjecture was eventually resolved by F. Morel \cite[Theorem 1.32]{MField} (see also \cite[Theorem 6]{Fasel09c} for the case of varieties of dimension $3$) by introducing a third ``homotopical" approach to Euler classes (the main result of \cite[Theorem 1]{AFComparison} shows that the homotopical approach coincides with the cohomological approach).  Proving that the various approaches to the theory of the Euler class coincide was an interesting and difficult problem (cf. \cite[Remark 1.33(2)]{MField}).

Our approach to the problem of comparing Euler class groups and Chow--Witt groups involves relating both with a third ``cohomotopy" group.  To explain our techniques, we begin by recalling some classical homotopy theory.  Borsuk showed \cite{Borsuk} that, if $M$ is a manifold of dimension $d \leq 2n-2$, the set of free homotopy classes of maps $[M,S^n]$ admits a (functorial) abelian group structure; this set is called the $n$-th cohomotopy group of $M$.  The Hopf classification theorem \cite{Hopf} states that if $\dim M = n$, then the group $[M,S^n]$ coincides with the cohomology group $H^n(M,\Z)$; the isomorphism is induced by a (dual) Hurewicz map $[M,S^n] \to H^n(M,\Z)$ \cite{SpanierCohomotopy}, which we describe momentarily.

In fact, Borsuk showed that if $X$ is any $(n-1)$-connected space and $M$ has dimension $d \leq 2n-2$ as above, then the set $[M,X]$ admits an abelian group structure, functorially in $X$.  Granting this, observe that the first stage of the Postnikov tower for $S^n$ yields a map of $(n-1)$-connected spaces $S^n \to K(\Z,n)$.  One then shows:
\begin{itemize}[noitemsep,topsep=1pt]
\item the (dual) Hurewicz map is simply the map $[M,S^n] \to [M,K(\Z,n)] = H^n(M,\Z)$ induced by functoriality in Borsuk's construction, and
\item the Hopf classification theorem can be deduced by an elementary obstruction theory argument.
\end{itemize}
Via the Freudenthal suspension theorem, one may view Borsuk's group structure as a ``stable" phenomenon in the sense of stable homotopy theory and thus one may view the cohomotopy groups as part of a cohomology theory.

We analyze similar ideas in the algebro-geometric setting.  The idea of studying algebro-geometric cohomotopy groups goes back (at least) to van der Kallen's group law on orbit sets of unimodular rows \cite{vdKallen83}.  We work in the setting of the Morel--Voevodsky $\aone$-homotopy category \cite{MV} in order to access basic homotopic constructions.  Let $Q_{2n}$ be the even-dimensional smooth affine quadric in ${\mathbb A}^{2n+1}$ given by the equation $\sum_{i=1}^n x_iy_i = z(1-z)$.  We observed in \cite[Theorem 2.2.5]{ADF} that $Q_{2n}$ is a sphere from the standpoint of the Morel-Voevodsky $\aone$-homotopy theory \cite{MV}.  As a consequence, if $X$ is a smooth scheme, by analogy with the ideas of Borsuk, we will call the set $[X,Q_{2n}]_{\aone}$ of morphisms in the $\aone$-homotopy category a {\em motivic cohomotopy set} (see Definition~\ref{defn:motiviccohomotopy}).

Paralleling our discussion of cohomotopy above, our goals in this paper are (i) to equip the set of free $\aone$-homotopy classes of maps $[X,Q_{2n}]_{\aone}$ with a functorial abelian group structure, (ii) to study algebro-geometric analogs of the Hurewicz homomorphism and Hopf classification theorem and (iii) to describe a presentation of $[X,Q_{2n}]_{\aone}$ with explicit generators and relations.  Write $\widetilde{CH}^n(X)$ for the Chow--Witt groups defined by J. Barge--F. Morel \cite{Barge00} and studied in detail in \cite{Fasel08a} (see Section~\ref{ss:motivicspheresprimer} for more precise references).  If $X$ has dimension $d \leq 2n-2$, then write $\mathrm{E}^n(X)$ for the Euler class group of Bhatwadekar--Sridharan \cite{Bhatwadekar02} mentioned above (see Definition~\ref{defn:eulerclassgroup} and Remark~\ref{rem:eulerclassgroups}).  

\begin{thmintro}[See Theorems~\ref{thm:abelian}, \ref{thm:comparison} and \ref{thm:EulertoChow} and Proposition~\ref{prop:surjectivityofsegreclasshomom}]
\label{thmintro:cohomotopy}
Suppose $k$ is a field having characteristic not equal to $2$, $n$ and $d$ are integers, $n \geq 2$, and $X$ is a smooth affine $k$-scheme of dimension $d \leq 2n-2$.  
\begin{enumerate}[noitemsep,topsep=1pt]
\item The set $[X,Q_{2n}]_{\aone}$ has a functorial abelian group structure;
\item there is a functorial ``Hurewicz" homomorphism $[X,Q_{2n}]_{\aone} \to \widetilde{CH}^n(X)$, which is an isomorphism if $d \leq n$; and
\item there is a functorial and surjective ``Segre class" homomorphism $s: \mathrm{E}^n(X) \to [X,Q_{2n}]_{\aone}$.
\item If, furthermore, $k$ is infinite and $d \geq 2$, then the morphism $s$ is an isomorphism.
\end{enumerate}
In particular, under the hypotheses in \textup{Point (4)}, if $X$ is a smooth affine $k$-scheme of dimension $d$, then there is a functorial isomorphism
\[
\mathrm{E}^d(X) \isomto \widetilde{CH}^d(X).
\]
\end{thmintro}

\begin{remintro}
Theorem \ref{thmintro:cohomotopy} essentially completely resolves the comparison problem of Barge--Morel mentioned above; see Remark \ref{rem:assumptions} for more detailed discussion of assumptions on the base field.  We say ``essentially" because Chow--Witt groups as used above are well-behaved for smooth schemes over a field.  Recently, M. Schlichting introduced another homotopical approach to the obstruction problem for commutative Noetherian rings with infinite residue fields \cite[Theorem 6.18]{SchlichtingEuler}.  Schlichting's obstruction group coincides with the Chow--Witt group for smooth schemes over an infinite perfect field \cite[Remark 6.19]{SchlichtingEuler}.  Therefore, it makes sense to ask whether the Bhatwadekar--Sridharan Euler class group can be compared with Schlichting's obstruction group for a commutative Noetherian ring with infinite residue fields.
\end{remintro}

Our construction clarifies the functorial and cohomological properties (e.g., pullbacks, Mayer--Vietoris type sequences, products) of Euler class groups developed in \cite{Mandal10} and \cite{Mandal12}; see Remark~\ref{rem:improvedfunctoriality} for more details.  Finally, our approach explains why Euler class groups, \`a la Bhatwadekar--Sridharan, are {\em not} suitable for the study of splitting problems for bundles of rank below the dimension: in fact, for such vector bundles there is no reason one should even be able to attach an ``Euler class" with values in this group.  In contrast, Morel's homotopic approach does yield a suitable obstruction theory for the splitting problem for bundles of rank below the dimension and this theory has been studied in detail in \cite{Asok12c}:  in analogy with the topological situation there are further obstructions beyond the primary Euler class obstruction.

The above result has a concrete ``classical" consequence that can be deduced from the comparison, which we believe is of independent interest.  Let $Z_0(X)$ be the group of zero cycles on $X$, $CI_0(X) \subset Z_0(X)$ be the subgroup generated by zero-dimensional {\em reduced} complete intersections in $X$, and set $\mathrm{E}_0(X) := Z_0(X)/CI_0(X)$ (see Definition~\ref{defn:weakeulerclassgroup}).  Cycles in the subgroup $CI_0(X)$ are known to be rationally equivalent to zero, and there is an induced (surjective) homomorphism $\mathrm{E}_0(X) \to CH_0(X)$ \cite[Lemma 2.5]{Bhatwadekar99}.

\begin{thmintro}[See Theorem~\ref{thm:weakEulerisChow}]
\label{thmintro:mainchow}
If $k$ is an infinite field having characteristic not equal to $2$ and $X$ is a smooth affine $k$-scheme of dimension $d \geq 2$, then the map $\mathrm{E}_0(X) \to CH_0(X)$ is an isomorphism.
\end{thmintro}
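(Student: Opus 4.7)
The plan is to derive Theorem~\ref{thmintro:mainchow} as a corollary of the paper's main identification of the codimension-$d$ Euler class group with the Chow--Witt group of zero cycles, i.e.\ an isomorphism $E^d(X) \isomto \CH^d(X)$ provided by the Euler class. Granted this, producing an isomorphism $\mathrm{E}_0(X) \isomto CH_0(X)$ reduces to matching the ``forget the orientation'' quotient on the Euler side with the ``forget the quadratic form'' quotient $\CH^d(X) \twoheadrightarrow CH^d(X) = CH_0(X)$ on the Chow--Witt side.

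Concretely, I would assemble the commutative square
\[
\begin{CD}
E^d(X) @>{\sim}>> \CH^d(X) \\
@VV{\pi_E}V @VV{\pi_{CH}}V \\
\mathrm{E}_0(X) @>>> CH_0(X)
\end{CD}
\]
where $\pi_E$ is the forgetful map sending $[(I,\omega)]$ to the underlying cycle $[V(I)]$ modulo $CI_0(X)$, and $\pi_{CH}$ is the canonical surjection induced by the rank map of unramified Nisnevich sheaves $\K^{MW}_d \to \K^M_d$. The map $\pi_E$ is surjective because every zero-dimensional closed subscheme of a smooth affine $d$-fold is a local complete intersection and therefore admits an orientation, and $\pi_{CH}$ is surjective by the fundamental exact sequence $H^d(X,\mathbf{I}^{d+1}) \to \CH^d(X) \to CH^d(X) \to 0$. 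The top arrow is an isomorphism by the paper's main theorem, and commutativity amounts to the compatibility of the Euler class construction with taking ranks.

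The crucial remaining step is to identify the kernels of the two vertical arrows under the top isomorphism. The kernel of $\pi_E$ is generated by ``change-of-orientation'' relations $[(I,\omega)] - [(I,\omega')]$ for two orientations of a common ideal $I$; the kernel of $\pi_{CH}$ is the image of $H^d(X,\mathbf{I}^{d+1})$. Two orientations differ by multiplication by a unit $u$ modulo $I$, and locally the resulting difference of Euler classes is governed by $\<u\>-1 \in \mathbf{I} \subset \mathrm{GW}$, which confirms the easy containment: change-of-orientation classes go to $H^d(X,\mathbf{I}^{d+1})$. The main obstacle I anticipate is the reverse inclusion, namely showing that every class in the image of $H^d(X,\mathbf{I}^{d+1})$ is actually realized by a change-of-orientation relation. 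This requires producing, from a given Gersten-level $\mathbf{I}^{d+1}$ cohomology class, an explicit pair $(I, \omega), (I, \omega')$ whose Euler class difference realizes it, essentially a moving/transfer statement, and this is where the dimension hypothesis $d \geq 2$, the infiniteness of $k$, and the cohomotopy-theoretic machinery developed earlier in the paper enter in an essential way. Once the kernels are matched, a short diagram chase yields the desired isomorphism $\mathrm{E}_0(X) \isomto CH_0(X)$.
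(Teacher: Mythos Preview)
Your overall architecture---set up a commutative square with $E^d(X)\isomto\CH^d(X)$ on top and the two ``forget'' surjections on the sides, then match kernels---is the same shape as the paper's argument. But the paper identifies the kernels by a different and more direct device: both $\ker\pi_E$ and $\ker\pi_{CH}$ are realized as the image of the \emph{same} group $Um_{d+1}(R)/E_{d+1}(R)$, via maps $\phi$ and $\phi'$ constructed in \cite{Das15} (alternatively \cite{vdKallen15}) and \cite{Fasel08b} respectively, and one checks $s\circ\phi=\phi'$. The five lemma (really a diagram chase on the two right-exact sequences) then finishes. So the paper never touches $H^d(X,\mathbf{I}^{d+1})$ or the sheaf sequence $\K^{MW}_d\to\K^M_d$ at this stage; the kernel comparison is entirely packaged into pre-existing unimodular-row results.

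Your proposal has a genuine gap exactly where you flag it: the reverse inclusion, i.e.\ that every class in the image of $H^d(X,\mathbf{I}^{d+1})\to\CH^d(X)$ arises as the Euler-class image of a change-of-orientation difference. You suggest that ``the cohomotopy-theoretic machinery developed earlier in the paper'' handles this, but it does not: that machinery is used solely to produce the top isomorphism $E^d(X)\isomto\CH^d(X)$ and plays no role in the kernel analysis. What actually closes the gap is the external input from \cite{Das15}/\cite{vdKallen15} (that $\ker\pi_E$ is exhausted by unimodular rows) together with \cite[Theorem 4.9]{Fasel08b} (the analogous statement for $\CH^d\to CH^d$). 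Without invoking these, your reverse inclusion is a substantive statement about moving $\mathbf{I}^{d+1}$-cycles that you have not proved. Note also that even your description of $\ker\pi_E$ as ``generated by change-of-orientation relations'' is not obvious from the definitions and is essentially the content of \cite[Theorem 3.8]{Das15}.
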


\begin{remintro}
The idea that $CH_0(X)$ should be related to complete intersection subvarieties goes back to the work of M. Pavaman Murthy and R. Swan in the 1970s \cite[Theorem 2]{MurthySwan} (see also \cite{WeibelComplete} and the references therein).  The question of whether the homomorphism in Theorem~\ref{thmintro:mainchow} is an isomorphism was posed explicitly by S. Bhatwadekar--R. Sridharan \cite[Remark 3.13]{Bhatwadekar99} (see also the survey of Murthy \cite[Question 5.3]{MurthySurvey}).  That question was already known to have a positive answer if: (i) $k$ is algebraically closed \cite[Theorem 5.2]{MurthySurvey}, (ii) if $k$ is the field of real numbers \cite[Theorem 5.5]{Bhatwadekar99}, or (iii) if $\dim X \leq 2$ (unpublished work of Bhatwadekar).
\end{remintro}

\subsubsection*{Overview}
Section~\ref{s:cohomotopy} is devoted to equipping the set $[X,Q_{2n}]_{\aone}$ with a functorial abelian group structure and studying the properties of this group in detail.  The existence of a functorial abelian group structure on this set of homotopy classes of maps is largely formal--at least given $\aone$-analogs of classical connectivity results.  The necessary connectivity results follow from F. Morel's unstable $\aone$-connectivity theorem and his $\aone$-analog of Freudenthal's classical suspension theorem \cite[\S 6]{MField}.

We give two equivalent constructions of the abelian group structure; the first, using the aforementioned suspension theorem, is useful for analyzing various functorial properties of the group structure, and the second, modeled on Borsuk's original construction, is useful for giving a geometric interpretation of the composition.  In particular, we observe that the abelian group structure on $[X,Q_{2n}]_{\aone}$ arises via an identification with stable cohomotopy groups.  Since stable cohomotopy is a ring cohomology theory, we obtain Mayer--Vietoris-type exact sequences and product structures on these groups.

As mentioned above, the variety $Q_{2n}$ is a motivic sphere, and we appeal to the computations of $\aone$-homotopy sheaves of spheres due to F. Morel \cite[Chapter 5]{MField}.  In particular, one knows that the first non-vanishing $\aone$-homotopy sheaf of $Q_{2n}$ is the $n$-th, and this sheaf is the unramified Milnor--Witt K-theory sheaf \cite[Chapter 3]{MField} (see Proposition~\ref{prop:propertiesofspheres}).  If $K(\K^{MW}_n,n)$ is the Eilenberg--Mac Lane space representing sheaf cohomology \cite[Chapter 8.3 p. 212]{JardineLHT}, then the first non-trivial stage of the $\aone$-Postnikov tower as described in \cite[\S 6.1]{Asok12a} yields a morphism $Q_{2n} \to K(\K^{MW}_n,n)$.  One may define the $n$-th Chow--Witt group of a smooth scheme $X$ as $H^i(X,\K^{MW}_n)$ and the functoriality of our abelian group structure immediately yields the Hurewicz homomorphism in the statement.  Techniques of obstruction theory as studied, e.g., in \cite{Asok12a} or \cite{Asok12c} imply the isomorphism statement.

In Section \ref{s:geometricgroupstructure} we provide a concrete description of the abelian group structure on $[X,Q_{2n}]_{\aone}$; this uses two tools.  First, we appeal to the affine representability results of \cite{AsokHoyoisWendtII}; these results allow us to identify the abstract set $[X,Q_{2n}]_{\aone}$ in terms of ``naive" $\aone$-homotopy classes of morphisms of $k$-schemes $X \to Q_{2n}$.  Then, we use the relationship between naive $\aone$-homotopy classes of morphisms with target $Q_{2n}$ and complete intersection ideals studied in \cite{Fasel15b}.  In short, morphisms $f: \Spec R \to Q_{2n}$ determine (non-uniquely) pairs $(I,\omega_I)$, where $I \subset R$ is an ideal, and $\omega_I: R/I^{\oplus n} \to I/I^2$ is a surjection; the naive $\aone$-homotopy class of $f$ essentially only depends on $I$.  Combining these ideas, we give an explicit ``ideal-theoretic" description of the product in Theorem~\ref{thm:concretetau}.

Finally, Section \ref{s:Euler} studies applications of the above ideas.  Using our concrete description of the product in cohomotopy, the existence of the ``Segre class" homomorphism is straightforward and described in Section~\ref{s:Euler} after some recollections on Euler class groups (we slightly recast the definition to fit more naturally in our context).  Surjectivity in the case of interest follows from a moving lemma and holds with minimal hypotheses.  In order to establish injectivity of the Segre class homomorphism, we define an explicit inverse.  In order to define the inverse homomorphism, we appeal to homotopy invariance of Euler class groups.  Finally, Theorem~\ref{thmintro:mainchow} is deduced from the comparison of Euler class groups and Chow--Witt groups, which appeal to results of Das--Zinna \cite{Das15}.  Appendix \ref{s:properties} provides streamlined treatment of homotopy invariance for Euler class groups (see Theorem~\ref{homotopy}) and some variations on the techniques of \cite{Das15} (see Proposition~\ref{prop:dzhomomrevisited}) tailored for this paper.  With these results in mind, Remark~\ref{rem:assumptions} explains the presence of the hypotheses in the statements above.  

\subsubsection*{Note on related work}
The first version of this paper was posted to the ArXiv in January 2016.  The original argument for injectivity of the map $s$ in Theorem~\ref{thmintro:cohomotopy}(4) contained a gap; this version closes that gap.  In October 2016, S. Mandal and B. Mishra posted a preliminary version of \cite{MandalMishra}, the results of which have some overlap with this one.  Using our notation, and under suitable hypotheses on $R$, they equip $\pi_0(\Singaone Q_{2n}(R))$ with an explicit group structure \cite[Theorem 6.5]{MandalMishra} defined ideal-theoretically (cf. Theorem~\ref{thm:concretetau} below); comparing \cite[Definition 6.1]{MandalMishra} and Construction~\ref{construction:geometriccomposition} below, one deduces that their group structure coincides with all the variants studied in this paper.  Again under suitable hypotheses on $R$, Mandal and Mishra construct \cite[\S 7]{MandalMishra} a surjective map $\mathrm{E}_n(R) \to \pi_0(\Singaone Q_{2n}(R))$ (cf. Proposition~\ref{prop:surjectivityofsegreclasshomom} below).  Furthermore, they prove (see \cite[Theorem 7.3]{MandalMishra}) that the preceding map is an isomorphism (cf. Theorem~\ref{thm:comparison} below).  Nevertheless, the techniques of \cite{MandalMishra} differ rather significantly from those used in this paper.

\subsubsection*{Notation/Preliminaries}
In this paper, the word ring will mean always mean commutative unital ring.  If $R$ is a ring and $a=(a_1,\ldots,a_m)\in R^m$, we write $\langle a\rangle\subset R$ for the ideal generated by $a_1,\ldots,a_m$.  If $I$ is an ideal, we write $\mathrm{ht}(I)$ for the height of $I$.

Fix a base field $k$ and write $\Sm_k$ for the subcategory of schemes over $\Spec k$ that are separated, smooth and have finite type over $\Spec k$.  We consider the categories $\Spc_k$ and $\Spc_{k,\bullet}$ of simplicial and pointed simplicial presheaves on $\Sm_k$; objects of these categories will be called {\em (pointed) $k$-spaces}; if $k$ is clear from context, we will simply call objects of this category (pointed) spaces.

We equip the category of (pointed) simplicial presheaves on $\Sm_k$ with its usual injective Nisnevich local model structure \cite[\S 5.1]{JardineLHT}; the associated homotopy category, which we denote by $\mathscr{H}^{\Nis}_s(k)$ ($\mathscr{H}^{\Nis}_{s,\bullet}(k)$), will be referred to as the {\em (pointed) simplicial homotopy category}.  If $(\mathscr{X},x)$ and $(\mathscr{Y},y)$ are pointed spaces, we set $[\mathscr{X},\mathscr{Y}]_s := \hom_{\mathscr{H}^{\Nis}_s(k)}(\mathscr{X},\mathscr{Y})$ and $[(\mathscr{X},x),(\mathscr{Y},y)]_s := \hom_{\mathscr{H}^{\Nis}_{s,\bullet}(k)}(\mathscr{X},\mathscr{Y})$.  An element of $[\mathscr{X},\mathscr{Y}]_s$ will also be called a free simplicial homotopy class of maps.

The category of (pointed) simplicial presheaves can be further localized to obtain the Morel-Voevodsky $\aone$-homotopy category $\ho{k}$ ($\hop{k}$); this localization is a left Bousfield localization of $\Spc_k$ \cite{MV}.  In particular, there is an endo-functor $\Laone$ of the category of (pointed) simplicial presheaves, together with a natural transformation $\theta: \id \to \Laone$ such that if $\mathscr{Y}$ is a space, then $\mathscr{Y} \to \Laone \mathscr{Y}$ is a cofibration and $\aone$-weak equivalence and $\Laone \mathscr{Y}$ is simplicially fibrant and $\aone$-local.  We refer the reader to \cite[Proposition 2.2.1]{AsokWickelgrenWilliams} for a convenient summary of properties of the $\aone$-localization functor and note in passing that $\Laone$ commutes with the formation of finite products.  We set $[\mathscr{X},\mathscr{Y}]_{\aone} := \hom_{\ho{k}}(\mathscr{X},\mathscr{Y})$ and $[(\mathscr{X},x),(\mathscr{Y},y)]_{\aone} := \hom_{\hop{k}}(\mathscr{X},\mathscr{Y})$.  Similarly, an element of $[\mathscr{X},\mathscr{Y}]_{\aone}$ will be called a free $\aone$-homotopy class of maps.

We begin by recalling some notation.  Write $S^1$ for the simplicial circle and $S^i$ for the simplicial $i$-sphere, i.e., the $i$-fold smash product of $S^1$ with itself.  More generally, set $S^{i+j,j} := S^i \sma \gm{\sma j}$.  If $\mathscr{Y}$ is any pointed space, then we write $\Sigma^i \mathscr{Y}$ for the smash product $S^i \sma \mathscr{Y}$ and $\Omega^i \mathscr{Y}$ for the derived $i$-fold loop space i.e., $\hom_{\bullet}(S^i,\mathscr{Y}^f)$, where $(-)^f$ is a functorial fibrant replacement functor.  Looping and suspension are adjoint, and the unit map of the loop-suspension adjunction yields a functorial morphism $\mathscr{Y} \to \Omega^i \Sigma^i \mathscr{Y}$ for any integer $i$.

A pointed space $(\mathscr{X},x)$ will be called simplicially $m$-connected if its stalks are all $m$-connected simplicial sets.  Similarly, we will say that $\mathscr{X}$ is $\aone$-$m$-connected, if $\Laone \mathscr{X}$ is simplicially $m$-connected.  We freely use K. Brown's homotopy-theoretic formulation of sheaf cohomology; we refer the reader to \cite[Part III]{JardineLHT} for more details.  For example, if $\bpi$ is a Nisnevich sheaf of abelian groups on $\Sm_k$ there are Eilenberg--Mac Lane objects $K(\bpi,n)$ \cite[p. 212]{JardineLHT} such that $[X,K(\bpi,n)]_s \cong H^n_{\Nis}(X,\bpi)$ \cite[Theorem 8.25]{JardineLHT}.  Unless we mention otherwise, sheaf cohomology will always be taken with respect to the Nisnevich topology.

\subsubsection*{Acknowledgments}
The authors would like to thank Satya Mandal for helpful conversations and correspondence and Chuck Weibel and Pavaman Murthy for helpful correspondence. We also would like to thank Yong Yang for helpful conversations. Moreover, we thank the participants of the AIM workshop on ``Projective modules and $\aone$-homotopy theory" for providing a very stimulating environment. Theorem~\ref{thmintro:cohomotopy} answers Question 2 from \url{http://aimath.org/pastworkshops/projectiveA1problems.pdf}, which was one of the main motivating questions at the workshop.  Finally, we thank the referees of previous versions of this paper for numerous suggestions to improve correctness and clarity of the presentation.

\section{Motivic stable cohomotopy}
\label{s:cohomotopy}
In this section, we establish an analog in the $\aone$-homotopy category of a result of Borsuk: if $U \in \Sm_k$ has Krull dimension $\leq 2n-2$ and $(\mathscr{X},x)$ is a pointed $\aone$-$(n-1)$-connected space, then, if $n \geq 2$, the set $[U,\mathscr{X}]_{\aone}$ of free $\aone$-homotopy classes of maps admits an abelian group structure, functorial in both inputs.  Section~\ref{ss:abgroupmapping} studies the properties of the construction in this generality.  In Section~\ref{ss:cohomotopy} we specialize these results to the cases of interest.  The main results used from this section in subsequent sections are Theorems~\ref{thm:naivevsrefined} and \ref{thm:abelian}.

\subsection{A primer on motivic spheres}
\label{ss:motivicspheresprimer}
Let $Q_{2n-1}$ be the smooth quadric in ${\mathbb A}^{2n}_{\Z}$ defined by the equation $\sum_{i=1}^n x_iy_i = 1$.  Let $Q_{2n}$ be the smooth quadric in ${\mathbb A}^{2n+1}_{\Z}$ defined by the equation $\sum_{i=1}^n x_i y_i = z(1-z)$.  In \cite{ADF}, the notation $Q_{2n}$ was used for an isomorphic quadric, namely change variables $x_i \mapsto -x_i$, and $z \mapsto -z$ for the isomorphism, and we implicitly use this isomorphism to appeal to the results of \cite{ADF} below.  We equip $Q_{2n}$ with the base-point given by the class of $x_1 = \cdots = x_n = y_1 = \cdots = y_n = 0,z = 1$.  Our goal in this section is to recall various facts about the geometry and $\aone$-homotopy theory of such quadrics.

\subsubsection*{Naive homotopy classes}
If $R$ is a ring, write as usual $\Delta^{\bullet}_R$ for the cosimplicial affine space over $R$, i.e.,
\[
\Delta^n_R := \Spec R[t_0,\ldots,t_n]/(\sum_i t_i = 1),
\]
equipped with the usual coface and codegeneracy morphisms.  If $Y$ is a (pointed) smooth $k$-scheme, then the assignment sending any $k$-algebra $R$ to
\[
\Singaone Y(R) := Y(\Delta^{\bullet}_R)
\]
defines a presheaf of simplicial sets on the category of smooth affine schemes.  We refer the reader to \cite[p. 87]{MV} for more details; this construction extends in an evident way to a presheaf of simplicial sets on $\Sm_k$.

Since $\Singaone Y$ is a presheaf of simplicial sets, we may take connected components to obtain the naive connected components presheaf
\[
\pi_0(\Singaone Y)(R) := \pi_0(\Singaone Y(R)).
\]
The presheaf $\pi_0(\Singaone Y)$ extends in an evident way to a presheaf of sets on $\Sm_k$ that is pointed if $Y$ is pointed.  The set $\pi_0(\Singaone Y)(R)$ is also called the set of naive $\aone$-homotopy classes of maps $\Spec R \to Y$.  The following result connects the set of naive homotopy classes of maps to ``true" $\aone$-homotopy classes (see \cite[Definition 2.1.1]{AsokHoyoisWendtII} for the definition of $\aone$-naive simplicial presheaves).

\begin{thm}
\label{thm:naivevsrefined}
Assume $k$ is a field, $X = \Spec R$ is a smooth affine $k$-scheme and $n \geq 0$ is an integer; if $n$ is even and $\geq 8$ assume further that $k$ has characteristic different from $2$.  The quadric $Q_n$ is $\aone$-naive; in particular, the map
\[
\pi_0(\Singaone Q_{n}(R)) \longrightarrow [X,Q_{n}]_{\aone}
\]
is a bijection, functorially in $X$.
\end{thm}

\begin{proof}
See {\cite[Theorems 4.2.1 and 4.2.2.]{AsokHoyoisWendtII}} for the case where either $n$ is odd or $n$ is even and $\leq 4$.  If $n = 6$, then combine \cite[Theorem 2.3.5 and Proposition 3.1.1]{AHWOctonion} with \cite[Theorem 2.2.4]{AsokHoyoisWendtII}.  Finally to treat the case where $k$ is finite, we appeal to \cite[Theorem 2.11]{AsokHoyoisWendtIII}.
\end{proof}

\begin{rem}
Below, we will routinely use the following consequence of this result: every element of $f \in [X,Q_{n}]_{\aone}$ can be represented by an actual morphism of schemes $f: X \to Q_{n}$.  In particular, building off this result and Proposition~\ref{prop:comparingcompositions}, we will give a geometric description of the abelian group structure on $[X,Q_{n}]_{\aone}$ from Proposition~\ref{prop:groupstructureI}.
\end{rem}

\subsubsection*{Base change and perfect subfields}
We refer the reader to \cite[Chapter 8.3 p. 212]{JardineLHT} for a discussion of sheaf cohomology formulated in terms of the local homotopy theory and \cite[\S 3.3]{AFComparison} for some complementary results. Recall that a Nisnevich sheaf of groups $\mathbf{G}$ is called {\em strongly $\aone$-invariant} if the simplicial classifying space $B\mathbf{G}$ is $\aone$-local.  A Nisnevich sheaf of abelian groups $\mathbf{A}$ is called {\em strictly $\aone$-invariant} if the cohomology presheaves $U\mapsto H^i(U,\mathbf{A})$ are homotopy invariant for any $i\geq 0$.  A sheaf of abelian groups $\mathbf{A}$ is strictly $\aone$-invariant if and only if the Eilenberg-Mac Lane spaces $K(\mathbf{A},i)$ are $\aone$-local for every $i \geq 0$.  We use the following variant of Morel's unstable $\aone$-connectivity property (see \cite[\S 2.2-2.3]{AsokWickelgrenWilliams} for an axiomatic treatment).

\begin{thm}[Morel]
\label{thm:unstableconnectivity}
Assume $k$ is a field, and $F \subset k$ is a perfect subfield.  Suppose $(\mathscr{X},x) \in \Spc_{k,\bullet}$ is a pointed space that is pulled back from $F$.  The following statements hold:
\begin{enumerate}[noitemsep,topsep=1pt]
\item For any integer $i \geq 1$ (resp. $i \geq 2$), the homotopy sheaves $\bpi_i^{\aone}(\mathscr{X})$ are strongly $\aone$-invariant (resp. strictly $\aone$-invariant).
\item If furthermore, $\bpi_1^{\aone}(\mathscr{X})$ is abelian, then it is strictly $\aone$-invariant.
\end{enumerate}
\end{thm}

\begin{proof}
The results hold for $F$ by appeal to \cite[Theorem 6.1]{MField} and \cite[Theorem 5.46]{MField}.  The results for $k$ follow immediately by standard base-change results \cite[Lemmas A.2 and A.4]{HoyoisHM}.
\end{proof}

\subsubsection*{Connectivity and cohomology of quadrics}
We now introduce a notion of ``cohomological dimension" for a space by restricting attention to strictly $\aone$-invariant sheaves.

\begin{defn}
If $X \in \Sm_k$, then say that $X$ has {\em $\aone$-cohomological dimension $\leq d$} if, for any strictly $\aone$-invariant sheaf $\mathbf{B}$, $H^i(X,\mathbf{B})$ vanishes for $i > d$.  Likewise, say $X$ has $\aone$-cohomological dimension $d$ (write $d = cd_{\aone}(X)$) if $d$ is the smallest integer such that $X$ has $\aone$-cohomological dimension $\leq d$.
\end{defn}

We now collect some results describing the sense in which the varieties $Q_i$ are motivic spheres and the sense in which they behave, cohomologically, like spheres in classical homotopy theory.

\begin{prop}
\label{prop:propertiesofspheres}
Assume $k$ is a field.  The following statements hold:
\begin{enumerate}[noitemsep,topsep=1pt]
\item the quadric $Q_i$ is a smooth affine scheme of dimension $i$;
\item $Q_{2n-1}$ is $\aone$-weakly equivalent to $\Sigma^{n-1}\gm{\sma n}$ and $Q_{2n}$ is $\aone$-weakly equivalent to $\Sigma^n \gm{\sma n}$; thus
    \begin{enumerate}[noitemsep,topsep=1pt]
    \item the quadric $Q_{i}$ is at least $\aone$-$(\lfloor \frac{i}{2} \rfloor - 1)$-connected;
    \item the first non-vanishing homotopy sheaf of $Q_i$ appears in degree $\lfloor \frac{i}{2} \rfloor$ and, if $i = 2n-1$ or $i = 2n$, is isomorphic to $\K^{MW}_n$;
    \end{enumerate}
\item if $n > 0$, for any strictly $\aone$-invariant sheaf $\mathbf{M}$,
\[
\begin{split}
H^j(Q_{2n},\mathbf{M}) = \begin{cases} \mathbf{M}(k) & \text{ if } j = 0; \\ \mathbf{M}_{-n}(k) & \text{ if } j = n; \\ 0 & \text{ otherwise;}\end{cases}\;\;\;\;
H^j(Q_{2n+1},\mathbf{M}) = \begin{cases} \mathbf{M}(k) & \text{ if } j = 0; \\ \mathbf{M}_{-n-1}(k) & \text{ if } j = n; \\ 0 & \text{ otherwise.}\end{cases}
\end{split}
\]
\end{enumerate}
\end{prop}

\begin{proof}
The first statement is immediate from the definition.  The second statement is \cite[Theorem 2.2.5]{ADF}; the substatements follow from Morel's unstable $\aone$-connectivity theorem \cite[Theorem 6.38]{MField} since an $i$-fold simplicial suspension is at least simplicially $(i-1)$-connected and Morel's computation of the first non-vanishing $\aone$-homotopy sheaf of a motivic sphere \cite[Theorem 6.40]{MField}; we implicitly appeal to Theorem \ref{thm:unstableconnectivity} to make statements over an arbitrary field (since spheres are defined over $\Spec \Z$).   Since the map $Q_{2n+1} \to {\mathbb A}^{n+1} \setminus 0$ (say projecting onto the $x$-variables) is an $\aone$-weak equivalence, the final statement follows from \cite[Lemma 4.5]{Asok12b} and the suspension isomorphism for cohomology.
\end{proof}

\subsubsection*{Chow--Witt groups of spheres}
Chow--Witt groups were initially defined in \cite{Barge00} (though they were called ``oriented Chow groups" there) and studied in detail in \cite{Fasel07,Fasel08a}.  The original definition is via cohomology of an explicit ``twisted Gersten complex" (see, e.g., \cite[D\'efinition 10.2.16]{Fasel08a}).  In \cite[Theorem 2.11 (p. 611)]{AFComparison}, we showed that for any line bundle $\mathscr{L}$ on a smooth scheme $X$, there is a twisted version of the Milnor--Witt K-theory sheaf $\K^{MW}(\mathscr{L})$ on $X$ and an identification
\[
H^n(X,\K^{MW}_n(\mathscr{L})) \cong \widetilde{CH}^n(X,\mathscr{L})
\]
that respects the contravariant functoriality on $\Sm_k$ of each side.  (Note: a priori the twisted Chow--Witt groups on the right hand side are Zariski cohomology groups, among other things, Zariski and Nisnevich cohomology groups of (twisted) strictly $\aone$-invariant sheaves on smooth schemes coincide).

Chow--Witt groups also are equipped with localization sequences: if $Z \subset X$ is a closed immersion of smooth schemes of codimension $c$ and normal bundle $\nu$, then there is a localization long exact sequence
\[
\cdots \widetilde{CH}^{i-c}(Z,\det \nu^{\vee}) \longrightarrow \widetilde{CH}^i(X) \longrightarrow \widetilde{CH}^i(X \setminus Z) \longrightarrow \cdots
\]
where $\det \nu$ is the determinant of $\nu$ and the right hand map is not in general surjective \cite[Corollaire 10.4.11]{Fasel08a}.

The subvariety $Z_n \subset Q_{2n}$ defined by $x_1 = \cdots = x_n = z = 0$ is isomorphic to ${\mathbb A}^n$ and comes equipped with a trivialization of the normal bundle.  The complement $X_{2n} := Q_{2n} \setminus Z_n$ is $\aone$-contractible by \cite[Theorem 3.1.1]{ADF}.  Using the localization sequence for this inclusion and $\aone$-homotopy invariance of Chow--Witt groups \cite[Corollaire 11.3.3]{Fasel08a}, there is a distinguished isomorphism $H^n(Q_{2n},\K^{MW}_n) \cong \K^{MW}_0(k)$.

We want to write an explicit generator for $H^n(Q_{2n},\K^{MW}_n)$; we do this in terms of the Gersten--Schmid resolution of $\K^{MW}_n$ \cite[\S 5.1 and Corollaries 5.43-5.44]{MField}.  The generic point of $Z_n$ is an element $z_n$ of $Q_{2n}^{(n)}$.  The maximal ideal of $\mathscr{O}_{Q_{2n},z_n}$ is generated by the classes of $x_1,\ldots,x_n$ and we can consider $x_1\wedge\ldots\wedge x_n\in \wedge^n(\mathfrak m_{z_n}/\mathfrak m_{z_n}^2)$. Finally, we obtain an element $\alpha = \langle 1\rangle\otimes x_1\wedge\ldots\wedge x_n$ in $\mathbf{K}^{MW}_{0}(k(z_n),\wedge^n (\mathfrak m_{z_n}/\mathfrak m_{z_n}^2))$; by unwinding the devissage isomorphism, one may establish the following result, which shows that the element just described provides an explicit generator for $\CH^n(Q_{2n})$.

\begin{lem}[{\cite[Lemma 4.2.6]{ADF}}]
\label{lem:explicitgenerator}
The element $\alpha_n\in \mathbf{K}^{MW}_{0}(k(z_n),\wedge^n (\mathfrak m_{z_n}/\mathfrak m_{z_n}^2))$ is a cycle in the Gersten--Schmid complex and its class generates $\CH^n(Q_{2n})$ as a $\mathbf{K}_0^{MW}(k)$-module.
\end{lem}

\subsection{Abelian group structures on mapping sets}
\label{ss:abgroupmapping}
In this section, we give two equivalent constructions of a functorial abelian group structure on sets of homotopy classes of maps.  The second is essentially Borsuk's classical construction, transplanted in our context; this version has the benefit that it can be made very explicit and will be used to derive concrete formulas for the composition in special cases (see Proposition~\ref{prop:groupstructureII}).  The first construction we present is a modernized version of Borsuk's construction (see Proposition~\ref{prop:groupstructureI}); this version has the benefit of rendering various properties of the group structure entirely formal.  The equivalence of the two constructions is established in Proposition~\ref{prop:comparingcompositions} and various functorial properties, including the existence of the Hurewicz homomorphism, are studied in Proposition~\ref{prop:functorialityinthetarget}.

\subsubsection*{Group structures on mapping sets: a modern approach}
Recall the notion of $h$-cogroup \cite[Definition 2.2.7]{Arkowitz} and $h$-group \cite[Definition 2.2.1]{Arkowitz}.  The quintessential example of an $h$-cogroup is given by the classical sphere $S^i_{top}$, $i \geq 1$, and the fold map $S^i_{top} \to S^i_{top} \vee S^i_{top}$.  By choosing a suitable simplicial model of $S^i_{top}$, one obtains an $h$-cogroup structure on the simplicial sphere $S^i$.  It follows that the constant simplicial presheaf $S^i$ has the structure of an $h$-co-group object in the category of simplicial presheaves, inherited from the sectionwise $h$-cogroup structure of the simplicial set $S^i$.  More generally, one obtains a functorial in $\mathscr{Y}$ $h$-cogroup structure on $\Sigma^i \mathscr{Y}$ for any pointed space $\mathscr{Y}$.

By adjunction, and appeal to the corresponding facts for simplicial sets, the space $\Omega^i \mathscr{Y}$ then carries a functorial $h$-group structure; if $i \geq 2$ it has a functorial structure of homotopy commutative $h$-group.  In particular, for any $U \in \Sm_S$, the set of pointed maps $[U_+,\Omega^i \Sigma^i \mathscr{Y}]$ has the structure of a group, functorially in both inputs; this group is automatically abelian whenever $i \geq 1$.

By appeal to the loop-suspension adjunction, for any pointed space $\mathscr{Y}$, there are maps $\mathscr{Y} \to \Omega^i \Sigma^i \mathscr{Y}$ for every $i \geq 0$.  The Freudenthal suspension theorem identifies the connectivity of the map $\mathscr{Y} \to \Omega^i \Sigma^i \mathscr{Y}$ at least under assumptions on the connectivity of $\mathscr{Y}$.   By appeal to obstruction theory arguments, we will now transport the group structure on $[U_+,\Omega^i \Sigma^i \mathscr{Y}]$ to one on $[U,\mathscr{Y}]_{\aone}$, at least under suitable hypotheses on the Krull dimension of $U$ and on the connectivity of $\mathscr{Y}$.

\begin{prop}
\label{prop:groupstructureI}
Assume $k$ is a field, and $n \geq 2$ is an integer.  Suppose $\mathscr{X}$ is a pointed $k$-space that is $\aone$-$(n-1)$-connected and pulled back from a perfect subfield of $k$ (e.g., the prime field).  If $U \in \Sm_k$ has $\aone$-cohomological dimension $d \leq 2n-2$, then for any integer $i \geq 1$, the map
\[
[U,\mathscr{X}]_{\aone} \cong [U_+,(\mathscr{X},x)]_{\aone} \longrightarrow [U_+,\Omega^i \Sigma^i (\mathscr{X},x)]_{\aone}
\]
induced by the pointed map $(\mathscr{X},x)\to \Omega^i \Sigma^i (\mathscr{X},x)$ is a bijection, functorial in both inputs.  Moreover, the group structure on $[U,\mathscr{X}]_{\aone}$ induced in this fashion is abelian, and functorial with respect to both inputs.
\end{prop}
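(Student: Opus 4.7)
The plan is to identify the claim as an instance of an obstruction-theoretic comparison, powered by Morel's $\aone$-analogue of the Freudenthal suspension theorem, and then to transport the homotopy-commutative $h$-group structure across the resulting bijection. The identification $[U,\mathscr{X}]_{\aone} \cong [U_+,(\mathscr{X},x)]_{\aone}$ is the standard disjoint-basepoint adjunction and requires no hypothesis, so the task reduces to showing that the unit map $\eta_i:(\mathscr{X},x) \to \Omega^i \Sigma^i(\mathscr{X},x)$ induces a bijection on $[U_+,-]_{\aone}$ under the stated hypotheses.

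For the connectivity input I would invoke Morel's $\aone$-Freudenthal theorem from \cite[\S 6]{MField}: since $\mathscr{X}$ is $\aone$-$(n-1)$-connected with $n\geq 2$, the unit $\eta_1$ is $\aone$-$(2n-1)$-connected, i.e.\ its homotopy fiber $F_1$ is $\aone$-$(2n-2)$-connected. A short induction on $i$, applying Freudenthal to the successively more connected suspensions $\Sigma^i(\mathscr{X},x)$ and then looping, shows that the homotopy fiber $F_i$ of $\eta_i$ is at least $\aone$-$(2n-2)$-connected for every $i\geq 1$. With this in hand I would climb the Postnikov tower of (a fibrant model of) $F_i$: each nontrivial stage is an $\aone$-local Eilenberg--Mac Lane space $K(\bpi_j^{\aone}(F_i),j)$ with $j\geq 2n-1$, where $\bpi_j^{\aone}(F_i)$ is strongly, hence by \cite[Theorem 5.46]{MField} strictly $\aone$-invariant thanks to the hypothesis that $k$ is infinite perfect. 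The obstructions to existence and uniqueness of lifts of a free $\aone$-homotopy class $U_+ \to \Omega^i \Sigma^i\mathscr{X}$ through the successive principal fibrations live in the groups $H^{j+1}_{\Nis}(U,\bpi_j^{\aone}(F_i))$ and $H^{j}_{\Nis}(U,\bpi_j^{\aone}(F_i))$ respectively, and the assumption $cd_{\aone}(U)\leq 2n-2 < j$ makes every such group vanish; hence $(\eta_i)_*$ is a bijection.

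Finally, for $i\geq 2$ the space $\Omega^i \Sigma^i(\mathscr{X},x)$ carries a natural homotopy commutative $h$-group structure \cite[Definition 2.2.1]{Arkowitz}, so $[U_+,\Omega^i \Sigma^i(\mathscr{X},x)]_{\aone}$ inherits an abelian group structure that is natural in both arguments; transporting this across the bijection produces the desired functorial abelian group structure on $[U,\mathscr{X}]_{\aone}$. Independence of $i$ follows from the compatibility of the units $\eta_i$ under the natural maps $\Omega^i\Sigma^i \to \Omega^{i+1}\Sigma^{i+1}$, and functoriality in $U$ is built in because every step is natural in the source. The main obstacle I expect is the bookkeeping for the inductive connectivity estimate of the fibers $F_i$, together with the verification that the Postnikov stages of $F_i$ are genuinely built from strictly $\aone$-invariant sheaves in the range where the $\aone$-cohomological dimension hypothesis is strong enough to annihilate every obstruction.
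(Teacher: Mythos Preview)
Your proposal is correct and follows essentially the same route as the paper: Morel's $\aone$-Freudenthal theorem for the connectivity of the unit maps $\eta_i$, an obstruction-theoretic argument exploiting the $\aone$-cohomological dimension hypothesis to obtain the bijection, and transport of the homotopy-commutative $h$-group structure from $\Omega^2\Sigma^2\mathscr{X}$. The paper's version differs only cosmetically, using the Moore--Postnikov factorization of $\mathscr{X}\to\Omega\Laone\Sigma\mathscr{X}$ rather than the Postnikov tower of the fiber, and being explicit about the compatibility $\Laone\Omega\Sigma\mathscr{X}\simeq\Omega\Laone\Sigma\mathscr{X}$ from \cite[Theorem~6.46]{MField} that you implicitly absorb into your invocation of Freudenthal.
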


\begin{proof}
Under the stated assumptions on $k$, we may appeal to the results of Morel in \cite[\S 6]{MField}; see \cite[\S 2-3]{AsokWickelgrenWilliams} for a more axiomatic treatment of these results.  We begin by collecting a number of connectivity statements.  By assumption, $\mathscr{X}$ is at least $\aone$-$(n-1)$-connected for some integer $n \geq 2$.  In particular, it is at least $\aone$-$1$-connected.  Note that $\Laone \mathscr{X}$ is then simplicially $(n-1)$-connected by definition, and thus $\Sigma^i \Laone \mathscr{X}$ is at least simplicially $(n+i-1)$-connected.  Morel's unstable $\aone$-connectivity theorem \cite[Theorem 6.38]{MField} implies that $\Sigma^i \Laone \mathscr{X}$ is also at least $\aone$-$(n+i-1)$-connected.  Since the map $\mathscr{X} \to \Laone \mathscr{X}$ is an $\aone$-weak equivalence, we conclude by appeal to \cite[\S 2 Lemma 2.13]{MV} that $\Sigma^i \mathscr{X}$ is also at least $\aone$-$(n+i-1)$-connected.

Since $n \geq 2$, we deduce that the canonical map $\Laone \Omega \Sigma \mathscr{X} \to \Omega \Laone \Sigma \mathscr{X}$ is a simplicial weak equivalence by \cite[Theorem 6.46]{MField} (see also \cite[Theorem 2.4.1]{AsokWickelgrenWilliams}).  For later use, the above connectivity estimates and an induction argument guarantee that the map $\Laone \Omega^i \Sigma^i \mathscr{X} \to \Omega^i \Laone \Sigma^i \mathscr{X}$ is a simplicial weak equivalence as well.  Since $\mathscr{X}$ is at least $\aone$-$(n-1)$-connected, the above connectivity estimates also allow us to conclude that $\Omega^i \Laone \Sigma^i \mathscr{X}$ is at least $\aone$-$(n-1)$-connected.

Morel's $\aone$-suspension theorem \cite[Theorem 6.61]{MField} states that, under the above hypotheses, the map $\mathscr{X} \to \Omega \Laone \Sigma \mathscr{X}$ is an $\aone$-$(2n-2)$-connected (i.e., the $\aone$-homotopy fiber $\mathscr{F}$ of this morphism is at least $\aone$-$(2n-2)$-connected).  Granted this, we may appeal to the $\aone$-Moore--Postnikov factorization of the map $\mathscr{X} \to \Omega \Laone \Sigma \mathscr{X}$ \cite[Theorem 6.1.1]{Asok12c} in order to analyze whether a pointed map $f: U_+ \to \Omega \Laone \Sigma \mathscr{X}$ lifts to $\mathscr{X}$.  Since the fundamental groups of $\mathscr{X} \to \Omega \Laone \Sigma \mathscr{X}$ are both trivial, the $\aone$-Moore--Postnikov factorization becomes a tower of principal fibrations (so the statement of \cite[Theorem 6.1.1]{Asok12c} is considerably simplified).

Now, suppose we have a morphism $f: U_+ \to \Omega \Laone \Sigma \mathscr{X}$.  Using the assumption on the $\aone$-cohomological dimension of $U$ and appealing to the lifting procedure described on \cite[p. 1055]{Asok12c}, one sees that $f$ lifts uniquely up to $\aone$-homotopy to a map $\tilde{f}: U_+ \to \mathscr{X}$.  In more detail, the obstructions to lifting $U_+$ to a morphism $U_+ \to \mathscr{X}$ are inductively defined elements of the group $H^{i+1}(U,\bpi_i(\mathscr{F}))$.  If a given obstruction vanishes, then the space of lifts is parameterized by a quotient of $H^i(U,\bpi_i(\mathscr{F}))$.  Since $\bpi_i(\mathscr{F}) = 0$ for $i \leq 2n-2$ by assumption, and $H^i(U,\mathbf{M}) = 0$ for any strictly $\aone$-invariant sheaf $\mathbf{M}$ and any integer $i > 2n-2$ by the assumption on the $\aone$-cohomological dimension of $U$, it follows that the obstruction groups at every stage are always trivial, and that there is a uniquely defined lift at every stage.  Furthermore, by \cite[Lemma 2.1]{Asok12b}, since $\mathscr{X}$ is $\aone$-$1$-connected, the map $[U_+,\mathscr{X}]_{\aone} \to [U,\mathscr{X}]_{\aone}$ ``forgetting the base-point" and the corresponding map with $\mathscr{X}$ replaced by $\Omega \Sigma \mathscr{X}$ are bijections, functorially in both inputs.  By transport of structure, $[U,\mathscr{X}]_{\aone}$ inherits a group structure.

To see that the group structure on $[U,\mathscr{X}]_{\aone}$ described above is abelian is also straightforward.  The connectivity estimates of the first paragraph combined with Morel's simplicial suspension theorem imply that the map $\Sigma \mathscr{X} \to \Omega \Sigma^2 \mathscr{X}$ is $\aone$-$2n$-connected and the map $\Omega \Sigma \mathscr{X} \to \Omega^2 \Sigma^2 \mathscr{X}$ is $\aone$-$(2n-1)$-connected.  Since the map in the previous line is an $h$-map and $\Omega^2 \Sigma^2 \mathscr{X}$ is homotopy commutative, for $U$ as in the statement the induced map $[U,\Omega \Sigma \mathscr{X}]_{\aone} \to [U,\Omega^2 \Sigma^2 \mathscr{X}]_{\aone}$ is an isomorphism of groups and thus the former is necessarily abelian; this also establishes the result for $i = 1$, and also for $i = 2$.  For $i \geq 2$, one proceeds inductively and shows that the map $[U,\Omega^{i-1} \Sigma^{i-1} \mathscr{X}]_{\aone} \to [U,\Omega^i\Sigma^i \mathscr{X}]_{\aone}$ is always a bijection.
\end{proof}

We now show that set of free homotopy classes of maps $[U,\mathscr{X}]_{\aone}$ can be identified with maps in a stable homotopy category.  Write $\mathrm{SH}^{S^1}_{\aone}$ for the $S^1$-stable $\aone$-homotopy category (see, for example, \cite[Definition 4.1.1]{MStable}).  This category may be constructed using an ``injective" model structure similar to that we used for the unstable $\aone$-homotopy category.  However, filtered colimits of fibrant objects are not particularly well-behaved in the injective model structure, so it will be helpful to use different model structures, namely the motivic model structure of \cite[Theorem 2.12]{DundasRondigsOstvaer}; in this model structure filtered colimits in $\Spc_{k,\bullet}$ preserve fibrant objects by \cite[Corollary 2.16]{DundasRondigsOstvaer}.

Recall from \cite[Definition 2.3]{DundasRondigsOstvaer} that a (pointed) simplicial presheaf $\mathscr{F}$ is fibrant with respect to the motivic model structure if for every smooth scheme $X$ the following conditions hold: (i)$\mathscr{F}(X)$ is a Kan complex, (ii) the projection $X \times \aone \to X$ induces a weak equivalence of simplicial sets $\mathscr{F}(X) \to \mathscr{F}(X \times \aone)$, (iii) $\mathscr{F}$ satisfies Nisnevich excision, i.e., $\mathscr{F}$ takes Nisnevich distinguished squares to homotopy pullback squares and $\mathscr{F}(\emptyset)$ is contractible.  The category $\Spc_{k,\bullet}$ is a simplicial model category with the usual notion of simplicial mapping space $\map(\mathscr{X},\mathscr{Y})$ \cite[p. 47]{MV}.  The weak equivalences in the motivic model structure on $\Spc_{k,\bullet}$ coincide with the Morel--Voevodsky weak equivalences, and \cite[Theorem 2.17]{DundasRondigsOstvaer} shows that the identity functor induces a Quillen equivalence between this model category and the version of the unstable $\aone$-homotopy category we have used so far.

Write $\Spt_{k}$ for the category of $S^1$-spectra of motivic spaces: this is constructed just as in \cite[\S 2.2]{DundasRondigsOstvaer}, i.e., an $S^1$-spectrum is a sequence of motivic spaces $E_n$ together with structure maps $\sigma: \Sigma E_n \to E_{n+1}$, and morphisms of spectra are morphisms of sequences commuting with the structure maps.  An $S^1$-spectrum $E$ is fibrant if and only if it is levelwise fibrant and an $\Omega$-spectrum.  The category $\Spt_k$ is a simplicial model category as well, and we abuse notation slightly by writing $\map(-,-)$ for the simplicial mapping space in this category.  The category $\Spt_k$ is a stable simplicial model category (this follows from the fact that the threefold permutation on $S^1 \wedge S^1 \wedge S^1$ acts as the identity, which one deduces immediately from the corresponding fact for simplicial sets).

If $\mathscr{Y}$ is a pointed space, we write $\Sigma^{\infty}\mathscr{Y}$ for the $S^1$-suspension spectrum attached to $\mathscr{Y}$. There are then standard simplicial Quillen adjunctions
\[
\xymatrix{
\Sigma^{\infty}: \Spc_{k,\bullet} \ar@<+.2em>[r] & \ar@<+.2em>[l] \Spt_{k}: \Omega^{\infty}.
}
\]
If $E$ and $E'$ are $S^1$-spectra, we will abuse notation and write $[E,E']_{\aone} := \hom_{\mathrm{SH}^{S^1}_{\aone}}(E,E')$.  Our next result says that $[U,\mathscr{X}]_{\aone}$ can, under suitable hypotheses, be identified in terms of mappings in the $S^1$-stable homotopy category.

\begin{prop}
\label{prop:stablegroupstructure}
Suppose $k$ is a field, $\mathscr{X}$ is a pointed $k$-space pulled back from a perfect subfield, and fix an integer $n \geq 2$.  If $\mathscr{X}$ is $\aone$-$(n-1)$-connected space, and $U \in \Sm_k$  has $\aone$-cohomological dimension $d \leq 2n-2$, then the map
\[
[U,\mathscr{X}]_{\aone} \longrightarrow [\Sigma^{\infty} U_+,\Sigma^{\infty} \mathscr{X}]_{\aone}
\]
is a bijection, functorial in both inputs.
\end{prop}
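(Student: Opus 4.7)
The plan is to realize the stable mapping set as a colimit of unstable mapping sets and apply Proposition~\ref{prop:groupstructureI} termwise. By the construction of the $S^1$-stable $\aone$-homotopy category, one has the standard identification
\[
[\Sigma^{\infty} U_+, \Sigma^{\infty} \mathscr{X}]_{\aone} \;\cong\; \colim_{i \geq 0} [\Sigma^i U_+, \Sigma^i \mathscr{X}]_{\aone},
\]
with transition maps induced by $S^1$-suspension. Applying the loop-suspension adjunction rewrites each term as $[U_+, \Omega^i \Sigma^i \mathscr{X}]_{\aone}$, and the transition maps become post-composition with the canonical unit morphisms $\Omega^i \Sigma^i \mathscr{X} \to \Omega^{i+1} \Sigma^{i+1} \mathscr{X}$.

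Under the connectivity and cohomological dimension hypotheses of the statement, Proposition~\ref{prop:groupstructureI} asserts that for every $i \geq 1$ the unit map $\mathscr{X} \to \Omega^i \Sigma^i \mathscr{X}$ induces a bijection
\[
[U_+, \mathscr{X}]_{\aone} \;\longrightarrow\; [U_+, \Omega^i \Sigma^i \mathscr{X}]_{\aone},
\]
and moreover identifies $[U_+, \mathscr{X}]_{\aone}$ with $[U, \mathscr{X}]_{\aone}$ via the connectivity-induced basepoint-forgetting bijection. A two-out-of-three argument then shows that each transition map in the tower is a bijection as well, so the colimit stabilizes already at $i=1$. Stringing the identifications together gives the desired bijection; functoriality in both inputs is inherited from the functoriality enjoyed at each finite stage of the colimit.

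The only real subtlety is verifying that the bijection obtained in this way coincides with the canonical stabilization map $[U,\mathscr{X}]_{\aone} \to [\Sigma^{\infty} U_+, \Sigma^{\infty} \mathscr{X}]_{\aone}$ appearing in the statement. This is a naturality bookkeeping check: one must ensure that the unit maps $\mathscr{X} \to \Omega^i \Sigma^i \mathscr{X}$ used in Proposition~\ref{prop:groupstructureI} are compatibly normalized with the adjunction isomorphisms used to rewrite the colimit system, and that the map into the first term of the colimit agrees with the stabilization map under these adjunctions. Modulo this routine (but attention-requiring) compatibility verification, the proposition reduces entirely to Proposition~\ref{prop:groupstructureI} together with the colimit description of stable mapping sets.
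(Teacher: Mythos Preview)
Your approach is essentially the same as the paper's: reduce to the colimit formula $[\Sigma^{\infty} U_+,\Sigma^{\infty}\mathscr{X}]_{\aone}\cong \colim_i [\Sigma^i U_+,\Sigma^i\mathscr{X}]_{\aone}$ and then invoke Proposition~\ref{prop:groupstructureI} to see that the transition maps are bijections. The only difference is emphasis: what you call a ``standard identification'' is precisely what the paper spends most of its proof justifying, via an explicit compactness argument in the motivic model structure of Dundas--R\"ondigs--{\O}stv{\ae}r (showing $\Omega^{\infty}E\simeq \colim_n\Omega^n E_n$ for suitable fibrant replacements and then using $\omega$-compactness of $U_+$ to commute $\map(U_+,-)$ past the filtered colimit); if you are content to cite that formula as known, your argument is complete, but be aware that in the motivic setting this step is not entirely cost-free and is the actual technical content of the paper's proof.
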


\begin{proof}
This result is essentially a compactness argument, and we give a detailed outline of the proof; related results are established in \cite[\S 9]{DuggerIsaksenCellular}.  Let $(E_n)_{n \geq 0}$ be a level-wise fibrant replacement of $\Sigma^{\infty}\mathscr{X}$ (i.e., $E_n$ is a fibrant replacement of $\Sigma^n \mathscr{X}$) and let $E$ be a fibrant replacement of $\Sigma^{\infty} \mathscr{X}$.  Since filtered colimits in $\Spc_{k,\bullet}$ preserve fibrant objects, it follows that there is a simplicial weak equivalence of the form $\Omega^{\infty} E \cong \colim_{n} \Omega^n E_n$.

If $\mathscr{F}$ is a pointed fibrant space, then there is an identification of {\em unpointed} mapping spaces $\map(U,\mathscr{F}) = \F(U)$.  Thus, we conclude that there is an identification of pointed mapping spaces $\map(U_+,\F) = \F(U)$ as well.  Since $U_+$ is $\omega$-compact in $\Spc_{k,\bullet}$ we conclude that
\[
\map(U_+,\colim_n \Omega^n E_n) \cong \colim_n \map(U_+,\Omega^n E_n).
\]
Combining this fact with the evident adjunctions, we see that there is a sequence of homotopy equivalences of Kan complexes of the form
\[
\begin{split}
\map_{\Spt_k}(\Sigma^{\infty} U_+,\Sigma^{\infty} \mathscr{X}) &\cong \map_{\Spt}(\Sigma^{\infty}U_+,E) \\
&\cong \map(U_+,\Omega^{\infty} E) \\
&\cong \map(U_+,\colim_n \Omega^n E_n) \\
&\cong \colim_n \map(U_+,\Omega^n E_n) \\
&\cong \colim_n \map(\Sigma^n U_+,E_n)
\end{split}
\]
Since $\pi_0$ preserves filtered colimits of simplicial sets, we conclude that
\[
\colim_n [\Sigma^n U_+,\Sigma^n \mathscr{X}]_{\aone} \cong [\Sigma^{\infty} U_+,\Sigma^{\infty} \mathscr{X}]_{\aone}.
\]
By adjunction there are functorial identifications $[U_+,\Omega^n \Sigma^n \mathscr{X}]_{\aone} \cong [\Sigma^n U_+,\Sigma^n \mathscr{X}]_{\aone}$ and using Proposition~\ref{prop:groupstructureI} we conclude that the transition maps in $\colim_n [\Sigma^n U_+,\Sigma^n \mathscr{X}]_{\aone}$ are all bijections.
\end{proof}

\subsubsection*{Borsuk's original construction}
In this section, we give the analog of Borsuk's classical construction of the composition on cohomotopy.  If $\mathscr{Y}$ is a pointed space, we use the following notation: $\Delta: \mathscr{Y} \to \mathscr{Y} \times \mathscr{Y}$ is the diagonal map, $\nabla: \mathscr{Y} \vee \mathscr{Y} \to \mathscr{Y}$ is the fold map, and $\mathscr{Y} \vee \mathscr{Y} \to \mathscr{Y} \times \mathscr{Y}$ is the canonical cofibration.

If $\mathscr{X}$ is an $(n-1)$-connected space, $U \in \Sm_k$, and $f,g: U \to \mathscr{X}$ are morphisms, then we can contemplate the following diagram:
\[
\xymatrix{
                  &                                & \mathscr{X} \vee \mathscr{X}  \ar[d]\ar[r]^-{\nabla} & \mathscr{X} \\
U \ar[r]^-{\Delta} & U \times U \ar[r]^-{f \times g} & \mathscr{X} \times \mathscr{X}. &
}
\]
If we can lift the composite $(f \times g) \circ \Delta$ to a morphism $\overline{(f,g)}: U \to \mathscr{\mathscr{X} \vee \mathscr{X}}$, then composing with the fold map, we obtain a map $\nabla \overline{(f,g)}$ that we can think of as the product of $f$ and $g$.  In general, there is an obstruction to producing such a lift, and even if a lift exists it need not be unique.  However, with suitable dimension restrictions imposed on $U$, lifts exist and will be unique.

\begin{prop}
\label{prop:groupstructureII}
Assume $k$ is a field, $\mathscr{X}$ is a pointed $k$-space that is pulled back from a perfect subfield of $k$ and $n \geq 2$ is an integer.  If $\mathscr{X}$ is $\aone$-$(n-1)$-connected space and if $U \in \Sm_k$ has $\aone$-cohomological dimension $d \leq 2n-2$, then composition with the canonical map $\mathscr{X} \vee \mathscr{X} \to \mathscr{X} \times \mathscr{X}$ induces a bijection
\[
[U,\mathscr{X} \vee \mathscr{X}]_{\aone} \longrightarrow [U,\mathscr{X} \times \mathscr{X}]_{\aone}
\]
functorial in both $U$ and $\mathscr{X}$.
\end{prop}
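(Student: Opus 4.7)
The strategy is to show the wedge inclusion $\iota \colon \mathscr{X} \vee \mathscr{X} \hookrightarrow \mathscr{X} \times \mathscr{X}$ is a sufficiently highly $\aone$-connected map, and then to apply Moore--Postnikov obstruction theory along the lines of the proof of Proposition~\ref{prop:groupstructureI}.

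The first step is a connectivity computation. Since $\mathscr{X} \vee \mathscr{X}$ and $\mathscr{X} \times \mathscr{X}$ are both $\aone$-$(n-1)$-connected (in particular $\aone$-simply connected because $n \geq 2$), \cite[Lemma 2.1]{Asok12b} reduces the problem to one about pointed homotopy classes. The cofiber of $\iota$ is the smash product $\mathscr{X} \wedge \mathscr{X}$. Combining the classical fact that the smash of two $(n-1)$-connected pointed simplicial sheaves is $(2n-1)$-connected (applied to $\Laone \mathscr{X} \wedge \Laone \mathscr{X}$, which is $\aone$-equivalent to $\mathscr{X} \wedge \mathscr{X}$) with Morel's unstable $\aone$-connectivity theorem \cite[Theorem 6.38]{MField}, one concludes that $\mathscr{X} \wedge \mathscr{X}$ is $\aone$-$(2n-1)$-connected. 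A Blakers--Massey-type argument in the $\aone$-setting then yields that the homotopy fiber $F$ of $\iota$ is $\aone$-$(2n-2)$-connected.

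With the connectivity of $F$ in hand, the obstruction-theoretic argument parallels that of Proposition~\ref{prop:groupstructureI}. The $\aone$-homotopy sheaves of $F$ are strictly $\aone$-invariant by \cite[Corollary 6.2]{MField}. Analyzing the Moore--Postnikov factorization of $\iota$, the obstructions to lifting a pointed map $U_+ \to \mathscr{X} \times \mathscr{X}$ along $\iota$ live in $H^i_{\Nis}(U, \pi_{i-1}^{\aone}(F))$ for $i \geq 2n$, while the difference between any two lifts is controlled by $H^i_{\Nis}(U, \pi_i^{\aone}(F))$ for $i \geq 2n-1$. The hypothesis $d \leq 2n-2$ forces both families of cohomology groups to vanish, producing the required bijection.

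The main obstacle is making the Blakers--Massey-type deduction of the fiber's connectivity precise in the $\aone$-setting. One clean alternative bypasses it altogether by stabilizing via Proposition~\ref{prop:stablegroupstructure}: the cofiber sequence $\mathscr{X} \vee \mathscr{X} \to \mathscr{X} \times \mathscr{X} \to \mathscr{X} \wedge \mathscr{X}$ splits stably---the retraction of $\Sigma^{\infty}\iota$ is assembled from the projections $p_1, p_2 \colon \mathscr{X} \times \mathscr{X} \to \mathscr{X}$ via the coproduct structure in the $S^1$-stable $\aone$-homotopy category---so $[\Sigma^{\infty}\iota]_\ast$ becomes the inclusion of a direct summand. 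The complementary summand $[\Sigma^{\infty} U_+, \Sigma^{\infty}(\mathscr{X} \wedge \mathscr{X})]_{\aone}$ is, by another application of Proposition~\ref{prop:stablegroupstructure} (valid since $d \leq 2n-2 \leq 4n-2$), identified with $[U, \mathscr{X} \wedge \mathscr{X}]_{\aone}$, and this set is trivial by obstruction theory on the Postnikov tower of $\mathscr{X} \wedge \mathscr{X}$, which begins in degree $2n$, against the bound on the $\aone$-cohomological dimension of $U$.
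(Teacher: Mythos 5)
Your primary argument---establishing the $\aone$-connectivity of $\iota\colon \mathscr{X} \vee \mathscr{X} \to \mathscr{X} \times \mathscr{X}$ and then running obstruction theory as in Proposition~\ref{prop:groupstructureI}---is exactly the route the paper takes. The paper, however, disposes of the connectivity estimate in one stroke by citing \cite[Corollary 3.3.11]{AsokWickelgrenWilliams}, which is precisely the $\aone$-homotopy-theoretic version of the statement that the wedge-to-product map on $(n-1)$-connected spaces is highly connected. That reference does the work you flag as "the main obstacle": making the Blakers--Massey-type deduction rigorous in the $\aone$-setting. Your sketch of that step (compute the cofiber $\mathscr{X}\wedge\mathscr{X}$ is $\aone$-$(2n-1)$-connected, deduce the fiber is $\aone$-$(2n-2)$-connected via an $\aone$-Blakers--Massey argument) is correct in outline, but as written it is a citation-shaped hole rather than a proof; as it happens the paper itself invokes an $\aone$-Blakers--Massey theorem elsewhere (citing \cite{Asok13}), so the ingredients are available, but the step should not be waved at.

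Your alternative argument via stabilization is genuinely different from the paper's and is arguably the cleaner of the two. After passing to the $S^1$-stable category via Proposition~\ref{prop:stablegroupstructure} (applicable to both $\mathscr{X}\vee\mathscr{X}$ and $\mathscr{X}\times\mathscr{X}$, since both are $\aone$-$(n-1)$-connected), the wedge-to-product map splits: the retraction $\Sigma^\infty(\mathscr{X}\times\mathscr{X}) \to \Sigma^\infty\mathscr{X} \times \Sigma^\infty\mathscr{X} \simeq \Sigma^\infty\mathscr{X} \vee \Sigma^\infty\mathscr{X}$ exists because finite products and coproducts agree in a stable category. So $[\Sigma^\infty U_+, \Sigma^\infty\iota]$ is a split injection with complement $[\Sigma^\infty U_+, \Sigma^\infty(\mathscr{X}\wedge\mathscr{X})]$, and the latter vanishes since $\mathscr{X}\wedge\mathscr{X}$ is $\aone$-$(2n-1)$-connected while $\mathrm{cd}_{\aone}(U) \leq 2n-2$. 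This bypasses unstable Blakers--Massey entirely, at the cost of routing through the stable category. What the paper's approach buys instead is that it works purely unstably and therefore extends to situations where one cares about keeping track of a geometric, unstable model; what yours buys is that every step is a formal consequence of results already established earlier in the paper (Propositions~\ref{prop:groupstructureI} and \ref{prop:stablegroupstructure}) plus elementary stable homotopy theory.
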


\begin{proof}
It follows from \cite[Corollary 3.3.11]{AsokWickelgrenWilliams} that the map $\mathscr{X} \vee \mathscr{X} \to \mathscr{X} \times \mathscr{X}$ is an $\aone$-$(2n-2)$-equivalence and the argument is a straightforward obstruction theory argument completely parallel to that in Proposition~\ref{prop:groupstructureI}.
\end{proof}


\begin{defn}
\label{defn:secondcomposition}
Assume $k$ is a field, $n \geq 2$ is an integer, $\mathscr{X}$ is a pointed $\aone$-$(n-1)$-connected space pulled back from a perfect subfield of $k$, and $U \in \Sm_k$ has $\aone$-cohomological dimension $d \leq 2n-2$.  Given two elements $f,g \in [U,\mathscr{X}]_{\aone}$, we set $\tau(f,g) := \nabla \overline{(f,g)}$ where $\overline{(f,g)}: U \to \mathscr{X} \vee \mathscr{X}$ is the unique lift of $(f \times g) \circ \Delta: U \to \mathscr{X} \times \mathscr{X}$ guaranteed to exist by \textup{Proposition~\ref{prop:groupstructureII}}.
\end{defn}

\subsubsection*{Comparing the composition operations}
The next result shows that when the composition operations of Propositions~\ref{prop:groupstructureI} and \ref{prop:groupstructureII} are both defined, they coincide.

\begin{prop}
\label{prop:comparingcompositions}
Assume $k$ is a field and fix an integer $n \geq 2$.  If $\mathscr{X}$ is an $\aone$-$(n-1)$-connected space pulled back from a perfect subfield of $k$ and $U \in \Sm_k$ has $\aone$-cohomological dimension $d \leq 2n-2$, then given $f,g \in [U,\mathscr{X}]_{\aone}$, the class $\tau(f,g)$ coincides with the product of $f,g$ from \textup{Proposition~\ref{prop:groupstructureI}}.
\end{prop}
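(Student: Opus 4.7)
The plan is to exploit the naturality of the group structure of Proposition \ref{prop:groupstructureI} with respect to pointed maps of the target. Throughout I will write $+$ for the group operation on $[U,-]_{\aone}$ produced by Proposition \ref{prop:groupstructureI}. Since $\mathscr{X}$ is $\aone$-$(n-1)$-connected, so are $\mathscr{X}\vee\mathscr{X}$ and $\mathscr{X}\times\mathscr{X}$, and hence Proposition \ref{prop:groupstructureI} equips $[U,\mathscr{X}\vee\mathscr{X}]_{\aone}$ and $[U,\mathscr{X}\times\mathscr{X}]_{\aone}$ with abelian group structures. By the functoriality statement in that proposition, every pointed map of the target induces a group homomorphism; in particular the cofibration $i\colon \mathscr{X}\vee\mathscr{X}\to\mathscr{X}\times\mathscr{X}$, the fold $\nabla\colon\mathscr{X}\vee\mathscr{X}\to\mathscr{X}$, and the two projections $\pi_1,\pi_2\colon\mathscr{X}\times\mathscr{X}\to\mathscr{X}$ all induce homomorphisms. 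Moreover $(\pi_{1*},\pi_{2*})$ furnishes a natural isomorphism of groups $[U,\mathscr{X}\times\mathscr{X}]_{\aone}\isomto [U,\mathscr{X}]_{\aone}\times[U,\mathscr{X}]_{\aone}$, so that the right-hand side carries the componentwise structure.

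Next, given $f,g\in [U,\mathscr{X}]_{\aone}$, I will introduce the candidate
\[
\alpha \;:=\; i_1\circ f \;+\; i_2\circ g \;\in\; [U,\mathscr{X}\vee\mathscr{X}]_{\aone},
\]
where $i_1,i_2\colon \mathscr{X}\to\mathscr{X}\vee\mathscr{X}$ are the canonical inclusions. Using $\pi_j\circ i\circ i_j=\mathrm{id}$ and $\pi_j\circ i\circ i_k=0$ for $j\neq k$, together with the product-group identification of the previous paragraph, one computes $i_*(\alpha)=(f,g)$. But $(f,g)$ is precisely the class of $(f\times g)\circ\Delta$ in $[U,\mathscr{X}\times\mathscr{X}]_{\aone}$. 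By the bijectivity statement of Proposition \ref{prop:groupstructureII}, the lift $\overline{(f,g)}\in [U,\mathscr{X}\vee\mathscr{X}]_{\aone}$ of $(f\times g)\circ\Delta$ is unique, and hence $\overline{(f,g)}=\alpha$.

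Finally, since $\nabla\circ i_1=\nabla\circ i_2=\mathrm{id}_{\mathscr{X}}$, applying the homomorphism $\nabla_*$ to the equality $\overline{(f,g)}=\alpha$ yields
\[
\tau(f,g) \;=\; \nabla_*\bigl(\overline{(f,g)}\bigr) \;=\; \nabla_*(i_1\circ f) + \nabla_*(i_2\circ g) \;=\; f+g,
\]
which is the sum in the sense of Proposition \ref{prop:groupstructureI}. There is no substantive obstacle: the argument is formal, resting on (i) the naturality in the target of the group structure from Proposition \ref{prop:groupstructureI}, (ii) the uniqueness of the lift guaranteed by Proposition \ref{prop:groupstructureII}, and (iii) the direct identifications $\nabla\circ i_j=\mathrm{id}$ and $\pi_j\circ i\circ i_k=\delta_{jk}\cdot\mathrm{id}$. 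The only point that warrants a brief verification is that the connectivity and $\aone$-cohomological dimension hypotheses of Proposition \ref{prop:groupstructureI} apply equally to $\mathscr{X}\vee\mathscr{X}$ and $\mathscr{X}\times\mathscr{X}$, which is immediate from the connectivity of $\mathscr{X}$.
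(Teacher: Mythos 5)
Your argument is correct, and it takes a genuinely different route from the paper's. The paper's proof explicitly invokes the $h$-space unit diagram for $\Omega\Sigma\mathscr{X}$ and chases the class $\nabla\overline{(f,g)}$ forward through a ladder of squares relating the wedge-to-product map for $\mathscr{X}$ to the one for $\Omega\Sigma\mathscr{X}$, concluding directly that $\tau(f,g)$ maps to $m(f,g)$ in $[U,\Omega\Sigma\mathscr{X}]_{\aone}$. You instead manufacture a specific candidate lift $\alpha = i_1\circ f + i_2\circ g$ from the ambient functorial group structure, verify $i_*\alpha = (f,g)$ by elementary bookkeeping with $\pi_j\circ i\circ i_k$, match it to $\overline{(f,g)}$ by the uniqueness in Proposition~\ref{prop:groupstructureII}, and then apply $\nabla_*$ using only $\nabla\circ i_j = \mathrm{id}$. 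The net effect is that you never need to mention the $h$-space multiplication or the stabilization to $\Omega\Sigma\mathscr{X}$ at all: you only use the output of Proposition~\ref{prop:groupstructureI} as a black box (a group structure functorial in the pointed target) together with the formal relations among $i$, $i_1$, $i_2$, $\pi_1$, $\pi_2$, $\nabla$. This buys a slightly more self-contained and algebraic argument; the paper's version is a bit more transparent about where the product is actually coming from. One small point worth making explicit in your write-up: the constant map to the basepoint is the neutral element of the group structure from Proposition~\ref{prop:groupstructureI}, which you tacitly use when asserting $i_*(i_1\circ f) = (f,0)$. This follows since the unit of $\Omega^i\Sigma^i\mathscr{X}$ is the basepoint and the comparison map $\mathscr{X}\to\Omega^i\Sigma^i\mathscr{X}$ is pointed, but it deserves a sentence.
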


\begin{proof}
If $\mathscr{Y}$ is any pointed $h$-space with multiplication $m$, then the unit condition can be phrased as the existence of a homotopy commutative diagram of the form
\begin{equation}
\label{eqn:A}
\xymatrix{
\mathscr{Y} \vee \mathscr{Y} \ar[r]^-{\nabla}\ar[d]&  \mathscr{Y} \ar@{=}[d] \\
\mathscr{Y} \times \mathscr{Y} \ar[r]_-{m} & \mathscr{Y}.
}
\end{equation}
We now apply this observation with $\mathscr{Y} = \mathscr{X}$ and $\mathscr{Y} = \Omega \Sigma X$ where $\mathscr{X}$ is $\aone$-$(n-1)$-connected and contemplate some obvious diagrams.

First, there is a homotopy commutative diagram of the form
\[
\xymatrix{
\mathscr{X} \vee \mathscr{X} \ar[r]\ar[d] & \Omega \Sigma \mathscr{X} \vee \Omega \Sigma \mathscr{X} \ar[d] \\
\mathscr{X} \times \mathscr{X} \ar[r] & \Omega \Sigma \mathscr{X} \times \Omega \Sigma \mathscr{X}.
}
\]
Thus, if $h \in [U,\mathscr{X} \times \mathscr{X}]_{\aone}$ lifts to $\tilde{h} \in [U,\mathscr{X} \vee \mathscr{X}]_{\aone}$, then the induced class in $[U,\Omega \Sigma \mathscr{X}\times \Omega \Sigma \mathscr{X}]_{\aone}$ lifts to $[U,\Omega \Sigma \mathscr{X} \vee \Omega \Sigma \mathscr{X}]$ as well.

Now, the homotopy commutativity of Diagram~\ref{eqn:A} together with functoriality of the fold map yields a homotopy commutative diagram of the form:
\[
\xymatrix{
\mathscr{X} \vee \mathscr{X} \ar[r]\ar[d] &  \ar[d] \mathscr{X} \\
\Omega \Sigma \mathscr{X} \vee \Omega \Sigma \mathscr{X} \ar[r] \ar[d]& \Omega \Sigma \mathscr{X} \ar[d] \\
\Omega \Sigma \mathscr{X} \times \Omega \Sigma \mathscr{X} \ar[r]^-{m} & \Omega \Sigma \mathscr{X}.
}
\]
Given maps $f,g \in [U,\mathscr{X}]_{\aone}$, since the map $[U,\mathscr{X}]_{\aone} \to [U,\Omega\Sigma \mathscr{X}]_{\aone}$ is a bijection, the homotopy commutativity of the above diagram together with the lifting observation of the previous paragraph show that $\tau(f,g) = m(f,g)$.
\end{proof}

\subsubsection*{Further functoriality properties}
If $\mathscr{X}$ is an $\aone$-$(n-1)$-connected space, setting $\bpi = \bpi_n^{\aone}(\mathscr{X})$, the first non-trivial layer of the $\aone$-Postnikov tower provides an $\aone$-homotopy class of maps $\mathscr{X} \to K(\bpi,n)$.  The space $K(\bpi,n)$ is also an $\aone$-$(n-1)$-connected space.  If $U \in \Sm_k$ has Krull dimension $\leq 2n-2$, then the set $[U,K(\bpi,n)]_{\aone}$ {\em a priori} admits two abelian group structures: one coming from Proposition~\ref{prop:groupstructureI} (i.e., induced by the $h$-group structure on $\Omega \Sigma K(\bpi,n)$) and one from the $h$-space structure of Eilenberg-Mac Lane spaces.  The stabilization map $K(\bpi,n) \to \Omega \Sigma K(\bpi,n)$ is a map of $h$-groups, and thus in the range of dimensions under consideration, the two abelian group structures under consideration necessarily coincide.
The map $\mathscr{X} \to K(\bpi,n)$ yields a homomorphism
\[
[U,\mathscr{X}]_{\aone} \longrightarrow H^n(U,\bpi)
\]
arising from the functoriality clause of Proposition~\ref{prop:groupstructureI}; this homomorphism is often referred to as a Hurewicz homomorphism.  The map $\mathscr{X} \to K(\bpi,n)$ corresponds to a canonically defined cohomology class $\alpha_{\mathscr{X}} \in H^n(\mathscr{X},\bpi)$ (in fact, the coskeletal description of the Postnikov tower \cite[\S 1.2(vi)]{DwyerKan} provides a canonical representing cocycle) that we will refer to as a {\em fundamental class}.

\begin{prop}
\label{prop:functorialityinthetarget}
Assume $k$ is a field, $n \geq 2$ is an integer, $\mathscr{X}$ is a pointed $\aone$-$(n-1)$-connected space pulled back from a perfect subfield of $k$, $\bpi := \bpi_n^{\aone}(\mathscr{X})$, and $U \in \Sm_k$ is a smooth scheme of $\aone$-cohomological dimension $d \leq 2n-2$.
\begin{enumerate}[noitemsep,topsep=1pt]
\item The Hurewicz homomorphism
\[
[U,\mathscr{X}]_{\aone} \longrightarrow H^n(U,\bpi).
\]
is surjective if $d \leq n+1$ and an isomorphism if $d \leq n$.
\item If $f \in [U,\mathscr{X}]_{\aone}$ is an $\aone$-homotopy class of maps, and $\alpha_{\mathscr{X}} \in H^n(\mathscr{X},\bpi)$ is the fundamental class, then the image of $f$ under the Hurewicz homomorphism is $f^*\alpha_{\mathscr{X}}$, where $f^*: H^n(\mathscr{X},\bpi) \to H^n(U,\bpi)$ is the pullback by $f$.
\end{enumerate}
\end{prop}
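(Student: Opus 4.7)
The plan is to deduce both parts from obstruction theory applied to the first stage of the $\aone$-Postnikov tower of $\mathscr{X}$, exactly as was done in the course of Proposition~\ref{prop:groupstructureI}. For part (2), note that the Hurewicz homomorphism is defined, via the functoriality clause of Proposition~\ref{prop:groupstructureI}, as post-composition with the Postnikov stage map $\mathscr{X} \to K(\bpi,n)$. Under the identification $H^n(\mathscr{X},\bpi) = [\mathscr{X},K(\bpi,n)]_{\aone}$, this Postnikov map corresponds by construction to the fundamental class $\alpha_{\mathscr{X}}$. Thus if $f: U \to \mathscr{X}$ represents an element of $[U,\mathscr{X}]_{\aone}$, its image in $H^n(U,\bpi) = [U,K(\bpi,n)]_{\aone}$ is represented by the composite $U \to \mathscr{X} \to K(\bpi,n)$, which is precisely $f^*\alpha_{\mathscr{X}}$. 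This part is essentially tautological.

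For part (1), the approach is to analyze lifts of a given map $g : U \to K(\bpi,n)$ against the map $p: \mathscr{X} \to K(\bpi,n)$ using the Moore--Postnikov factorization of $p$, following the discussion on p.~1055 of \cite{Asok12c}. Since $\mathscr{X}$ is $\aone$-$(n-1)$-connected, the map $p$ is an isomorphism on $\bpi_i^{\aone}$ for $i \leq n$, so its $\aone$-homotopy fiber is $\aone$-$n$-connected, with homotopy sheaves $\mathbf{A}_i := \bpi_i^{\aone}(\mathscr{X})$ for $i \geq n+1$. Under the standing hypothesis on $k$, these sheaves are strongly $\aone$-invariant by \cite[Theorem 6.1]{MField}, hence strictly $\aone$-invariant by \cite[Theorem 5.46]{MField}. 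The Moore--Postnikov tower thus yields a sequence of principal $K(\mathbf{A}_i,i)$-fibrations, $i \geq n+1$, with $k$-invariants the classes determining the Postnikov stages.

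The obstruction to lifting $g$ to the $(i+1)$st stage, given a lift at stage $i$, lies in $H^{i+1}(U,\mathbf{A}_i)$, while the indeterminacy of such a lift is parametrized by $H^i(U,\mathbf{A}_i)$; see \cite[Theorem 6.1.1]{Asok12c}. By the hypothesis $cd_{\aone}(U) \leq d$, the groups $H^{i+1}(U,\mathbf{A}_i)$ vanish whenever $i+1 > d$, i.e., when $i \geq d$, and the groups $H^i(U,\mathbf{A}_i)$ vanish whenever $i > d$, i.e., when $i \geq d+1$. Since the relevant indices are $i \geq n+1$, all obstruction groups vanish as soon as $n+1 \geq d$, i.e., $d \leq n+1$, giving surjectivity; and all indeterminacy groups vanish as soon as $n+1 \geq d+1$, i.e., $d \leq n$, giving uniqueness and hence bijectivity. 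Passing to $\aone$-homotopy classes then yields the claimed statements, once one observes that the resulting bijection is a homomorphism (which, as noted in the paragraph preceding the proposition, follows from the compatibility of the $h$-structure on $K(\bpi,n)$ with that of $\Omega \Sigma K(\bpi,n)$ in the relevant range).

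The main technical obstacle is simply ensuring that the obstruction-theoretic machinery of \cite[\S 6]{Asok12c} applies cleanly in this setting, which reduces to confirming strict $\aone$-invariance of the homotopy sheaves $\mathbf{A}_i$ and the usual cohomological vanishing for $\aone$-cohomological dimension; both are available. The rest is simply bookkeeping of indices.
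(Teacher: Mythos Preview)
Your proof is correct. Part~(2) is handled identically to the paper: it is tautological from the definition of the fundamental class and the Hurewicz map as post-composition with $\mathscr{X} \to K(\bpi,n)$.

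For Part~(1), your approach differs mildly from the paper's. You run obstruction theory directly through the Moore--Postnikov tower of $p:\mathscr{X} \to K(\bpi,n)$, reading off the obstruction groups $H^{i+1}(U,\mathbf{A}_i)$ and indeterminacy groups $H^{i}(U,\mathbf{A}_i)$ for $i \geq n+1$ and checking index-by-index when they vanish. The paper instead considers the cofiber sequence $\mathscr{X} \to K(\bpi,n) \to \mathscr{C}$, invokes the $\aone$-Blakers--Massey theorem to see that $\mathscr{C}$ is $\aone$-$(n+1)$-connected (since the fiber $\mathscr{X}\langle n\rangle$ is $\aone$-$n$-connected), and then reads off surjectivity from the resulting long exact sequence in $[U_+,-]$. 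Your route is more elementary in that it avoids Blakers--Massey entirely and gives both the surjectivity and the bijectivity bounds in one uniform bookkeeping argument; the paper's route packages the same information into a single connectivity estimate on the cofiber but leaves the injectivity case somewhat implicit. Both arguments rest on the same inputs (strict $\aone$-invariance of the relevant homotopy sheaves and the definition of $\aone$-cohomological dimension), so the difference is one of presentation rather than substance.
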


\begin{proof}
For the first statement, begin by observing that $K(\bpi,n)$ is precisely the $n$-th stage of the $\aone$-Postnikov tower of $\mathscr{X}$.  Then, the obstructions to lifting to the $(n+i)$th stage of the $\aone$--Postnikov tower lie in $H^{n+i+1}(U,\bpi_{n+i}^{\aone}(\mathscr{X}))$, and the space parameterizing lifts are quotients of $H^{n+i}(U,\bpi_{n+i}^{\aone}(\mathscr{X}))$.  If $U$ has $\aone$-cohomological dimension $\leq n+1$, then all the obstruction groups vanish.  If $U$ furthermore has $\aone$-cohomological dimension $\leq n$, then the possible choices of lifts are necessarily unique.  Combining these observations yields the stated surjectivity and bijectivity assertions. 

The second statement follows immediately from the definition of the Hurewicz map and the fundamental class.
\end{proof}

\subsection{Motivic cohomotopy sets made concrete}
\label{ss:cohomotopy}
We now specialize the results of the previous section to the cases of interest in this paper, namely the quadrics $Q_{2n-1}$ and $Q_{2n}$.  In this context, we define motivic cohomotopy sets, observe in Theorem~\ref{thm:naivevsrefined} that these group structures have concrete algebro-geometric interpretations, and then study the Hurewicz homomorphism in great detail (Theorem~\ref{thm:abelian}).

\subsubsection*{Motivic cohomotopy groups}
The following definition is a motivic analog of the classical notion of cohomotopy (we have chosen indexing to agree with the indexing in motivic cohomology); by the observations just made, these cohomotopy sets can be identified as maps into suitable quadrics in certain cases.  We refer the reader to the preliminaries for our conventions regarding spheres.

\begin{defn}[Motivic cohomotopy]
\label{defn:motiviccohomotopy}
Assume $i,j$ are positive integers and $U \in \Sm_k$.  The {\em unstable motivic cohomotopy sets} of $U$ are defined by the formula:
\[
\pi^{i,j}(U) := [U,S^{i,j}]_{\aone}.
\]
The stable motivic cohomotopy groups are defined by the formula:
\[
\pi^{i,j}_s(U) := [\Sigma^{\infty}U_+,\Sigma^{\infty}S^{i,j}]_{\aone}.
\]
\end{defn}

\begin{thm}
\label{thm:excision}
Assume $k$ is a field.   The following statements are true.
\begin{enumerate}[noitemsep,topsep=1pt]
\item If $n \geq 2$, $i-j \geq n$, and $U \in \Sm_k$ has $\aone$-cohomological dimension $d \leq 2n-2$, then the evident map $\pi^{i,j}(U) \to \pi^{i,j}_s(U)$ is a bijection.
\item If $j: W \to X$ is an open immersion and $\varphi: V \to X$ is an \'etale morphism such that the pair $(j,\varphi)$ give rises to a Nisnevich distinguished square, i.e., if $\varphi^{-1}(X \setminus W)_{red} \to (X \setminus W)_{red}$ is an isomorphism, then there is a Mayer-Vietoris long exact sequence of the form
    \[
    \cdots \longrightarrow \pi^{i,j}_s(X) \longrightarrow \pi^{i,j}_s(W) \oplus \pi^{i,j}_s(V) \longrightarrow \pi^{i,j}_s(W \times_X V) \longrightarrow \pi^{i+1,j}_s(X) \longrightarrow \cdots
    \]
\end{enumerate}
\end{thm}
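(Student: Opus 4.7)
The plan is to treat both parts as direct applications of the framework built up in Section~\ref{ss:abgroupmapping} together with standard facts about the Nisnevich local injective model structure on $\Spc_k$.

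For part (1), the strategy is to invoke Proposition~\ref{prop:stablegroupstructure} with $\mathscr{X} = S^{i,j}$; the only nontrivial point is checking that $S^{i,j}$ is $\aone$-$(n-1)$-connected whenever $i - j \geq n$. Since $S^{i,j} = S^{i-j} \wedge \gm^{\wedge j}$, the $\aone$-weak equivalences $\mathbb{A}^n \setminus 0 \simeq S^{2n-1,n}$ and $Q_{2n} \simeq \Sigma(\mathbb{A}^n \setminus 0) \simeq S^{2n,n}$ (the latter from \cite[Theorem 2.2.5]{Asok14}), combined with Morel's unstable $\aone$-connectivity theorem \cite[Theorem 6.38]{MField} and the fact that simplicial suspension of a connected space raises $\aone$-connectivity by one, identify $S^{i,j}$ as an $\aone$-$(i-j-1)$-connected space. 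The hypothesis $i - j \geq n$ then supplies the required $(n-1)$-connectivity, and the hypothesis $d \leq 2n-2$ matches Proposition~\ref{prop:stablegroupstructure} exactly, so the evident stabilization map is a bijection.

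For part (2), the approach is to observe that any Nisnevich distinguished square is, by construction of the Nisnevich local injective model structure \cite[\S 5.1]{JardineLHT}, a homotopy cocartesian square in $\ho{k}$. Adding disjoint basepoints preserves this property, and the left Quillen functor $\Sigma^{\infty}:\Spc_{k,\bullet}\to \Spt_k$ from the proof of Proposition~\ref{prop:stablegroupstructure} preserves homotopy cocartesian squares. We therefore obtain a cofiber sequence
\[
\Sigma^{\infty}(W\times_X V)_+ \longrightarrow \Sigma^{\infty}W_+\vee \Sigma^{\infty}V_+ \longrightarrow \Sigma^{\infty}X_+
\]
in the $S^1$-stable $\aone$-homotopy category. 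In the stable setting this extends Puppe-style to the right by $S^1$-suspension. Applying the contravariant functor $[-, \Sigma^{\infty}S^{i,j}]_{\aone}$ and using the identification $\Sigma S^{i,j} \cong S^{i+1,j}$ (so that connecting maps take the form $\pi^{i,j}_s(W\times_X V)\to \pi^{i+1,j}_s(X)$) yields the desired Mayer--Vietoris long exact sequence.

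The only mild subtlety is the connectivity count in part (1); once that is in place, both statements reduce to invoking infrastructure already assembled in the preceding subsection and in the foundational literature on Nisnevich-local model structures and motivic spectra.
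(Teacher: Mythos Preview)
Your proof is correct and follows essentially the same approach as the paper: for part~(1) you establish $\aone$-$(n-1)$-connectivity of $S^{i,j}$ via Morel's unstable connectivity theorem and then invoke Proposition~\ref{prop:stablegroupstructure}, and for part~(2) you derive the Mayer--Vietoris sequence from the distinguished triangle associated to a Nisnevich square. Two minor remarks: first, the detour through the quadric models in your connectivity argument is unnecessary---it suffices to note that $S^{i,j} = \Sigma^{i-j}\gm^{\wedge j}$ is simplicially $(i-j-1)$-connected and then apply \cite[Theorem~6.38]{MField} directly; second, the paper's own proof cites Proposition~\ref{prop:groupstructureII} for part~(1), which appears to be a typo for Proposition~\ref{prop:stablegroupstructure}, so your reference is in fact the correct one.
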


\begin{proof}
Since spheres are defined over $\Spec \Z$ (in particular over the prime field) and $i-j \geq n$, Morel's unstable $\aone$-connectivity theorem \cite[Theorem 6.38]{MField} implies that the space $S^{i,j}$ is at least $\aone$-$(n-1)$-connected.  Bearing this in mind, the first statement is then an immediate consequence of Proposition~\ref{prop:groupstructureI}.

The second statement follows from the existence of a Mayer-Vietoris distinguished triangle:
\[
\Sigma^{\infty}(W \times_X V)_+ \longrightarrow \Sigma^{\infty}(W \sqcup V)_+ \longrightarrow \Sigma^{\infty}X_+.
\]
\end{proof}

\begin{rem}
\label{rem:externalproducts}
There are also external product maps for motivic cohomotopy sets.  For suitable choices of base-point there are pointed $\aone$-weak equivalences of the form $Q_{2m} \wedge Q_{2n} \cong Q_{2(n+m)}$.  Therefore, given morphisms $f: X \to Q_{2m}$ and $g: X \to Q_{2n}$, we can form the composite morphism
\[
X_+ \stackrel{\delta}{\longrightarrow} X_+ \sma X_+ \stackrel{f \sma g}{\longrightarrow} Q_{2m} \wedge Q_{2n} \isomto Q_{2(n+m)}.
\]
This composite defines a functorial morphism $\pi^{2m,m}(X) \times \pi^{2n,n}(X) \longrightarrow \pi^{2(m+n),m+n}(X)$.  The resulting composite depends on the chosen weak-equivalence $Q_{2m} \wedge Q_{2n} \cong Q_{2(n+m)}$ only up to isomorphism, and a straightforward computation of the $\aone$-homotopy class of the map switching factors \cite[Proof of Theorem 4.4.1]{AsokWickelgrenWilliams} shows that the resulting product is $(-\epsilon)$-graded commutative (in $m$ and $n$), where $-\epsilon = -\langle -1 \rangle \in GW(k)$ in the notation of \cite[\S 3.1]{MField}.
\end{rem}

\subsubsection*{The Hurewicz map made concrete}
We now analyze the Hurewicz map of Proposition~\ref{prop:functorialityinthetarget} in the case where $\mathscr{X} = Q_{2n}$.  Indeed, in that case, recall from Proposition~\ref{prop:propertiesofspheres} that $Q_{2n}$ is $\aone$-$(n-1)$-connected, and has $\bpi_n^{\aone}(Q_{2n}) \cong \K^{MW}_n$.  In Lemma~\ref{lem:explicitgenerator}, we gave an explicit generator of the $\K^{MW}_0(k)$-module $H^n(Q_{2n},\K^{MW}_n)$.  Since the fundamental class of Proposition~\ref{prop:functorialityinthetarget}(2) is another module generator of $H^n(Q_{2n},\K^{MW}_n)$, it follows that $\alpha_n$ and $\alpha_{Q_{2n}}$ differ from each other by a constant unit $\lambda \in \K^{MW}_0(k)$.

By the discussion preceding Lemma~\ref{lem:explicitgenerator}, there are canonical and functorial isomorphisms $H^n(U,\K^{MW}_n) \cong \widetilde{CH}^n(U)$ for any smooth $k$-scheme $U$.  Then, combining the isomorphism of Theorem \ref{thm:naivevsrefined} and Proposition~\ref{prop:functorialityinthetarget} the Hurewicz morphism may be viewed as a morphism:
\[
\pi_0(\Singaone Q_{2n})(U) \isomto [U,Q_{2n}]_{\aone} \longrightarrow H^n(U,\K^{MW}_n) \cong \widetilde{CH}^n(U)
\]
sending a morphism $f: U \to Q_{2n}$ to the class $f^*\alpha_n \in \widetilde{CH}^n(U)$.  Putting everything together with Proposition~\ref{prop:groupstructureI} and using the notation of Definition~\ref{defn:motiviccohomotopy}, we may summarize the discussion so far in the following omnibus result.

\begin{thm}
\label{thm:abelian}
Fix an integer $n \geq 2$.  If $k$ is a field having characteristic not equal to $2$ and $U$ is a smooth $k$-scheme of $\aone$-cohomological dimension $d\leq 2n-2$, then the binary operation $\tau$ of \textup{Definition \ref{defn:secondcomposition}} equips $\pi^{2n,n}(X)$ with a functorial structure of abelian group.  The Hurewicz homomorphism
\[
\pi^{2n,n}(U) = [U,Q_{2n}]_{\aone} \longrightarrow \widetilde{CH}^n(U),
\]
has the following properties:
\begin{enumerate}[noitemsep,topsep=1pt]
\item it is surjective if $d \leq n+1$ and an isomorphism if $d \leq n$;
\item it sends a map $f: U \to Q_{2n}$ representing a class in $\pi^{2n,n}(X)$ to $\lambda^{-1} f^*\alpha_n$ where the class $\alpha_n$ differs from the ``fundamental class" $\alpha_{Q_{2n}}$ of $\widetilde{CH}^n(Q_{2n})$ by a constant unit $\lambda \in \K^{\MW}_0(k)^{\times}$.
\end{enumerate}
\end{thm}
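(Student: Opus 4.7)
The plan is to assemble the theorem directly from the results of Section~\ref{ss:abgroupmapping}, since by this point essentially all of the substantive work has been carried out. First I would verify the hypotheses needed to invoke the group-structure machinery: the identification $Q_{2n} \cong S^{2n,n}$ via \cite[Theorem 2.2.5]{Asok14} together with Morel's unstable $\aone$-connectivity theorem ensures that $Q_{2n}$ is $\aone$-$(n-1)$-connected, and by hypothesis $U$ has $\aone$-cohomological dimension $\leq 2n-2$ with $n \geq 2$. These are precisely the hypotheses of Propositions~\ref{prop:groupstructureI} and \ref{prop:groupstructureII}, so $\pi^{2n,n}(U) = [U,Q_{2n}]_{\aone}$ inherits a functorial abelian group structure, and Proposition~\ref{prop:comparingcompositions} identifies this group law with the operation $\tau$ of Definition~\ref{defn:secondcomposition}, yielding the first assertion.

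For the Hurewicz statement I would specialize Proposition~\ref{prop:functorialityinthetarget} to $\mathscr{X} = Q_{2n}$. The computation $\bpi_n^{\aone}(Q_{2n}) \cong \K^{MW}_n$ follows from the sphere identification combined with \cite[Corollary 6.43]{MField}, and the standard identification $H^n_{\Nis}(U,\K^{MW}_n) \cong \widetilde{CH}^n(U)$ identifies the target of the Hurewicz homomorphism with the Chow--Witt group. Part (1) of Proposition~\ref{prop:functorialityinthetarget} then immediately delivers clause (1) of the theorem, i.e., surjectivity when $d \leq n+1$ and bijectivity when $d \leq n$.

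Finally, for clause (2), Proposition~\ref{prop:functorialityinthetarget}(2) describes the Hurewicz image of $f$ as $f^*\alpha_{Q_{2n}}$, where $\alpha_{Q_{2n}} \in H^n(Q_{2n},\K^{MW}_n) \cong \widetilde{CH}^n(Q_{2n})$ is the fundamental class extracted from the Postnikov tower. The localization sequence applied to the $\aone$-contractible complement $Q_{2n}\setminus Z_n$ (as sketched before Lemma~\ref{lem:explicitgenerator}) exhibits $\widetilde{CH}^n(Q_{2n})$ as a free $\K^{MW}_0(k)$-module of rank $1$, and Lemma~\ref{lem:explicitgenerator} produces $\alpha_n$ as an explicit generator; consequently $\alpha_{Q_{2n}} = \lambda \cdot \alpha_n$ for some unit $\lambda \in \K^{MW}_0(k)^{\times}$, which is exactly what the statement asserts. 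I do not expect a serious obstacle here, since the theorem is essentially a packaging of earlier results; the only mild delicacy is that no direct comparison between the Postnikov-theoretic fundamental class and the geometric generator $\alpha_n$ is attempted, and the statement is deliberately written to absorb this discrepancy into the unit $\lambda$.
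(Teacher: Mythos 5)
Your proposal is correct and takes essentially the same route as the paper: assemble the group structure from Propositions~\ref{prop:groupstructureI}, \ref{prop:groupstructureII} and \ref{prop:comparingcompositions}, read off clause (1) from Proposition~\ref{prop:functorialityinthetarget}(1) after the identification $\bpi_n^{\aone}(Q_{2n}) \cong \K^{MW}_n$, and deduce clause (2) from Proposition~\ref{prop:functorialityinthetarget}(2) together with Lemma~\ref{lem:explicitgenerator}. The only cosmetic difference is that the paper's proof phrases clause (2) as ``reduce to the universal case $U = Q_{2n}$, $f = \mathrm{id}$'' (and hence invokes Lemma~\ref{lem:aonecohomologicaldimensioncomputations} to ensure $cd_{\aone}(Q_{2n}) = n \leq 2n-2$), whereas you apply Proposition~\ref{prop:functorialityinthetarget}(2) to $U$ directly; both are sound and the universal-case step is really only a device for making the constancy of $\lambda$ conspicuous. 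One small point worth making fully explicit (which you flagged as a ``mild delicacy'' but did not fully close): concluding $\alpha_{Q_{2n}} = \lambda\alpha_n$ with $\lambda$ a \emph{unit} requires knowing the Postnikov fundamental class $\alpha_{Q_{2n}}$ is itself a generator of the free rank-one $\K^{MW}_0(k)$-module $H^n(Q_{2n},\K^{MW}_n)$, not merely some element; this follows from the sheaf-theoretic Hurewicz identification $H^n(Q_{2n},\K^{MW}_n) \cong \hom(\K^{MW}_n,\K^{MW}_n)$ sending the fundamental class to the identity, which is what the paper is invoking in the sentence preceding Lemma~\ref{lem:explicitgenerator}.
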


\section{Group structures and naive $\aone$-homotopy classes}
\label{s:geometricgroupstructure}
This section is the algebraic and geometric heart of the paper.  Recall from Theorem~\ref{thm:naivevsrefined} that there is an identification $\pi_0(\Singaone Q_{2n})(U) \cong [U,Q_{2n}]_{\aone}$ for $U$ a smooth affine $k$-scheme.  Combining this fact with  Proposition \ref{prop:comparingcompositions} (with $\mathscr{X} = Q_{2n}$), we see that if $U$ has $\aone$-cohomological dimension $\leq 2n-2$, the set $\pi_0(\Singaone Q_{2n})(U)$ inherits a functorial abelian group structure via the operation $\tau$.

In Section~\ref{ss:naivehomotopies}, we review some results of \cite{Fasel15b} and some preliminary moving lemmas that allow us to identify maps $U \to Q_{2n}$ in ``ideal-theoretic" terms.  In Section~\ref{ss:geometricdescription} we use these ideas to give a completely algebraic description of the composition operation $\tau$; this is achieved in Theorem~\ref{thm:concretetau}.

\subsection{Naive homotopies of maps to quadrics}
\label{ss:naivehomotopies}
If $R$ is a ring, then elements of $Q_{2n}(R)$ are sequences $(a_1,\ldots,a_n,b_1,\ldots,b_n,s) \in R^{2n+1}$ satisfying the equation defining $Q_{2n}$; we will write $(a,b,s)$ for such a triple with implicit understanding that $a = (a_1,\ldots,a_n)$, and $b = (b_1,\ldots,b_n)$.  As the notation indicates, we will frequently think of $a \in R^n$ as a row vector, and write $a^t$ for the column vector obtained by transposing $a$.  Given $a,b \in R^n$, and $s \in R$ the assertion that $(a,b,s)$ defines a class in $Q_{2n}(R)$ can equivalently be written as the equality  $ab^t = s(1-s)$.  

If $(a,b,s)$ defines a class in $Q_{2n}(R)$, we can consider the ideal $I := \langle a,s \rangle := \langle a_1,\ldots,a_n,s\rangle \subset R$.  We write $\overline{a}_i$ for the image of $a_i$ under the map $I \to I/I^2$.  The next result shows that, conversely, given an ideal $I$ in $R$ together with lifts of generators of $I/I^2$, there is an induced morphism $\Spec R \to Q_{2n}$.

\begin{lem}[{\cite[Lemma p. 533]{MohanKumarCI}}]
\label{lem:liftexists}
Given an ideal $I$ and a sequence $(a_1,\ldots,a_n)$ of elements in $I$ such that $I/I^2 = \langle \overline{a_1},\ldots,\overline{a_n} \rangle$, there exist an element $s \in I$ and $b_1,\ldots,b_n \in R$ such that $I = \langle a_1,\ldots,a_n,s \rangle$ and $(a,b,s)$ is an element of $Q_{2n}(R)$.
\end{lem}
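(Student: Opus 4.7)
The plan is to produce a single element $s \in I$ with two properties: (i) $I = \langle a_1, \ldots, a_n, s\rangle$, and (ii) $s(1-s) \in \langle a_1, \ldots, a_n\rangle$. Once such $s$ is in hand, (ii) immediately yields $b_1, \ldots, b_n \in R$ with $\sum_i a_i b_i = s(1-s)$, so that $(a,b,s)$ is then an $R$-point of the quadric $Q_{2n}$ defined by $\sum x_i y_i = z(1-z)$.

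To produce $s$, set $J := \langle a_1, \ldots, a_n\rangle$ and observe that the hypothesis $I/I^2 = \langle \overline{a}_1, \ldots, \overline{a}_n\rangle$ unwinds to the identity $I = J + I^2$ inside $R$. Passing to the quotient $\overline{R} := R/J$ and $\overline{I} := I/J$, this identity becomes $\overline{I} = \overline{I}^2$, so $\overline{I}$ is an idempotent ideal of $\overline{R}$. The task then reduces to finding an idempotent element $\overline{s} \in \overline{R}$ generating $\overline{I}$: any lift $s \in I$ of such an $\overline{s}$ automatically satisfies (i), because $\overline{I} = \overline{s} \cdot \overline{R}$ forces $I = J + sR$, and satisfies (ii), because $\overline{s}(1 - \overline{s}) = 0$ in $\overline{R}$ says precisely that $s(1-s) \in J$.

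The key input is then the classical fact that a finitely generated ideal $\overline{I}$ with $\overline{I} = \overline{I}^2$ is generated by an idempotent element. The standard proof is a determinant trick: writing each chosen generator of $\overline{I}$ as an $\overline{I}$-linear combination of the others produces a matrix equation which, after multiplication by the adjugate, yields a determinant of the form $1 + r$ with $r \in \overline{I}$ that annihilates $\overline{I}$, so that $-r$ is the desired idempotent generator. The only mild obstacle is verifying the finite-generation hypothesis on $\overline{I}$; this is automatic in the geometric applications where $I$ is finitely generated, and in any case the conclusion of the lemma already exhibits $I$ as generated by $n+1$ elements, so nothing is lost by assuming this from the outset. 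With $\overline{s}$ in hand, lifting arbitrarily to $s \in I$ completes the construction.
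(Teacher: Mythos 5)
Your argument is correct and matches the standard proof of Mohan Kumar's lemma (the paper does not reprove the statement, it only cites \cite{MohanKumarCI} and \cite{Fasel15b}, whose proofs proceed exactly by reducing modulo $J = \langle a_1,\dots,a_n\rangle$ and extracting an idempotent generator of $\overline{I}$ via the determinant trick). One small logical slip: you justify the finite-generation hypothesis on $\overline{I}$ in part by remarking that ``the conclusion of the lemma already exhibits $I$ as generated by $n+1$ elements, so nothing is lost by assuming this,'' which is circular---you cannot invoke the conclusion to license a hypothesis used in its proof; the clean justification is simply that the rings in play are Noetherian (coordinate rings of smooth affine $k$-schemes), so $I$ and hence $\overline{I}$ are automatically finitely generated, and without some such hypothesis the statement is genuinely false (e.g.\ for $n=0$ take $R=\prod_{i\geq 1}\mathbb{F}_2$ and $I=\bigoplus_{i\geq 1}\mathbb{F}_2$, an idempotent ideal with $I/I^2=0$ that is not principal).
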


\begin{proof}
By construction, $M := I/\langle a_1,\ldots,a_n \rangle$ is a finitely generated $R$-module such that $M/IM = 0$.  Nakayama's lemma implies that there exists an element $s \in I$ such that $(1-s)M = 0$.  For any element $m \in I$, we conclude that $m = \sum l_i a_i + ms$ and thus $I = \langle a_1,\ldots,a_n,s \rangle$.  The statement follows by considering the special case $m = s$.
\end{proof}

Using this observation, the naive $\aone$-homotopy class of an element $(a,b,s) \in Q_{2n}(R)$ is essentially determined by $\langle a,s\rangle$; this idea is encoded in the following lemmas.

\begin{lem}
\label{lem:movingb}
Suppose $R$ is a commutative unital ring.  If $a,b,b' \in R^n$ and $s \in R$ are such that $(a,b,s)$ and $(a,b',s)$ are elements of $Q_{2n}(R)$, then $[(a,b,s)] = [(a,b',s)]$ in $\pi_0(\Singaone Q_{2n})(R)$.
\end{lem}

\begin{proof}
We construct an explicit naive homotopy.  Set $B:= b + T(b'-b)$ note that $B(0) = b$ and $B(1) = b'$. Unwinding the definition, we see that $(a,B,s) \in Q_{2n}(R[T])$ gives the required naive homotopy.
\end{proof}

\begin{lem}
\label{lem:movings}
Suppose $R$ is a commutative ring and $I \subset R$ is a finitely generated ideal.  Suppose we are given elements $a_1,\ldots,a_n,s,s' \in I$ such that $(1-s)I \subset \langle a_1,\ldots,a_n \rangle$ and $(1-s')I \subset \langle a_1,\ldots,a_n \rangle$.  If $b,b' \in R^n$ are such that $(a,b,s)$ and $(a,b',s') \in Q_{2n}(R)$, then $[(a,b,s)] = [(a,b',s')]$ in $\pi_0(\Singaone Q_{2n})(R)$.
\end{lem}

\begin{proof}
Set $S := s' + T(s-s') \in R[T]$ so $S(0) = s'$ and $S(1) = s$.  Since $s'$ and $s - s'$ lie in $I$, we see that $S \in I[T]$.  By assumption $\langle a,s \rangle \subset I$ and $(1-s)I \subset \langle a \rangle$; we thus conclude $I = \langle a,s \rangle$.  We claim that $(1-S)I[T] \subset \langle a_1,\ldots,a_n\rangle \subset R[T]$, and to show this it suffices to check that $(1-S)s \in \langle a \rangle \subset R[T]$.

Since $(1-s')I \subset \langle a \rangle$, we know that $(1-s')s \subset \langle a \rangle$ and we may find $c_1,\ldots,c_n \in R$ such that $(1-s')s = ac^t$.  Then, $ab^t - ac^t = s - s^2 - (s - s's) = s(s'-s) \in \langle a \rangle$.  It follows that
\[
(1-S)s = (1 - s' - T(s - s'))s = (1-s')s - Ts(s - s')
\]
lies in $\langle a_1,\ldots,a_n \rangle$ as well.  By appeal to Lemma~\ref{lem:liftexists}, we conclude that there exists $B \in R[T]^n$ such that $(a,B,S) \in Q_{2n}(R[T])$ and $[(a,B(0),s')] = [(a,B(1),s)]$ in $\pi_0(\Singaone Q_{2n})(R)$.

To conclude, we appeal to Lemma~\ref{lem:movingb} to deduce that $[(a,B(1),s)] = [(a,b,s)]$ $\pi_0(\Singaone Q_{2n})(R)$ and also that $[(a,B(0),s')] = [(a,b',s')]$.
\end{proof}

\begin{lem}
\label{lem:naivehomotopyclassdependsonideal}
Suppose $R$ is a commutative ring and $I \subset R$ is a finitely generated ideal.  If $(a,b,s)$ and $(a',b',s')$ determine elements of $Q_{2n}(R)$ satisfying the following properties:
\begin{itemize}[noitemsep,topsep=1pt]
\item[i)] the elements $a_1,\ldots,a_n,a_1',\ldots,a_n' \in I$;
\item[ii)] for $i = 1,\ldots,n$, $a_i - a_i' \in I^2$, and
\item[iii)] $I/I^2 = \langle \overline{a_1},\ldots,\overline{a_n} \rangle = \langle \overline{a_1}',\ldots,\overline{a_n}' \rangle$,
\end{itemize}
then the naive $\aone$-homotopy classes $[(a,b,s)]$ and $[(a',b',s')]$ coincide in $\pi_0(\Singaone Q_{2n})(R)$.
\end{lem}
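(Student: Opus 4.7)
The plan is to produce an explicit naive $\aone$-homotopy, i.e., an element of $Q_{2n}(R[t])$, interpolating $(a,b,s)$ and $(a',b',s')$, possibly after concatenating a short chain of intermediate homotopies. The construction naturally splits into two stages: first I would deform the $x$-coordinates linearly from $a$ to $a'$ using the hypothesis $a_i - a_i' \in I^2$; then I would compare different completions of the same $x$-coordinates via a rigidity statement.

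For the first stage, set $A_i(t) := a_i + t(a_i' - a_i) \in R[t]$. Since $a_i' - a_i \in I^2$, each $A_i(t)$ lies in the ideal $J := I\cdot R[t] \subset R[t]$, and $A_i(t) \equiv a_i \pmod{J^2}$; hypothesis (iii) then forces $\overline{A_1(t)},\ldots,\overline{A_n(t)}$ to generate $J/J^2$. An application of Lemma~\ref{lem:liftexists} to the ring $R[t]$ and the ideal $J$ produces $B_i(t) \in R[t]$ and $S(t) \in J$ with $(A(t),B(t),S(t)) \in Q_{2n}(R[t])$; specializing at $t=0$ and $t=1$ yields a naive $\aone$-homotopy between some completion $(a,B(0),S(0))$ of $a$ and some completion $(a',B(1),S(1))$ of $a'$.

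The second stage amounts to the following rigidity claim: if $a_1,\ldots,a_n \in I$ generate $I/I^2$ and both $(a,b,s), (a,b'',s'') \in Q_{2n}(R)$ have associated ideal $\langle a, s \rangle = \langle a, s'' \rangle = I$, then they are naive $\aone$-homotopic. The key observation is that reducing $\sum a_i b_i = s(1-s)$ modulo $\langle a\rangle$ shows that $\overline{s} \in R/\langle a\rangle$ is idempotent, and similarly for $\overline{s''}$; since both generate the same principal ideal $I/\langle a\rangle$, the uniqueness of idempotent generators in a commutative ring forces $\overline{s} = \overline{s''}$, whence $\delta := s'' - s \in \langle a \rangle$. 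Writing $\delta = \sum a_i c_i$ and setting
\[
Z(t) := s + t\delta, \qquad Y_i(t) := b_i + t\,c_i(1-2s) - t^2\,c_i\,\delta,
\]
a direct expansion shows $\sum a_i Y_i(t) = Z(t)(1-Z(t))$, so $(a, Y(t), Z(t)) \in Q_{2n}(R[t])$ provides a naive homotopy from $(a,b,s)$ to some $(a, \tilde b, s'')$; a final linear interpolation in the $y$-coordinates (which stays on the quadric because $\sum a_i(\tilde b_i - b_i'') = 0$ once the $z$-coordinates agree) then connects $(a, \tilde b, s'')$ to $(a, b'', s'')$.

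The lemma follows by concatenating the Stage I homotopy with two applications of the rigidity claim: one at $t=0$ comparing $(a,b,s)$ with $(a,B(0),S(0))$, and one at $t=1$ comparing $(a',B(1),S(1))$ with $(a',b',s')$. The main obstacle is the rigidity step: while the conceptual input is the clean observation about idempotent rigidity in $R/\langle a\rangle$, one must carefully verify that the polynomial formulas above genuinely land on $Q_{2n}$, and one must be attentive to whether the hypotheses of the lemma really guarantee $\langle a,s\rangle = \langle a',s'\rangle = I$ (a natural-looking condition in every application, but one that may need to be finessed since it is not literally written into the statement).
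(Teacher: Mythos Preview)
The paper does not supply its own proof here; the lemma is quoted from \cite[Lemma~2.0.5]{Fasel15b}, so there is nothing in the present text to compare against directly. Your two-stage plan---linearly interpolate the $x$-coordinates via $A_i(t)=a_i+t(a_i'-a_i)$ and invoke Lemma~\ref{lem:liftexists} over $R[t]$, then run a rigidity argument fixing the $z$-coordinate modulo $\langle a\rangle$ using uniqueness of idempotent generators of a principal ideal---is the natural one, and the explicit homotopies you write down all check out.

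The reservation you voice at the end, however, is not a detail to be finessed but a genuine gap in the statement as recorded here. Your rigidity step requires $\bar s$ and $\overline{S(0)}$ to generate the \emph{same} ideal in $R/\langle a\rangle$, i.e.\ $\langle a,s\rangle=I$; hypotheses (i)--(iii) do not force this. Indeed the lemma is false as literally written: taking $I=R$ renders (i)--(iii) vacuous, and the conclusion would then assert that $\pi_0(\Singaone Q_{2n})(R)$ is a singleton for every commutative ring $R$, which fails already when $R$ is the coordinate ring of $Q_{2n}$ itself. The intended hypothesis is that both triples arise as in Lemma~\ref{lem:liftexists}, so that $s,s'\in I$ and $\langle a,s\rangle=\langle a',s'\rangle=I$; every application of the lemma in this paper (notably in the proof of Lemma~\ref{lem:welldefined}) is of this form. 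With that hypothesis in place your argument is complete and matches the approach in the cited reference.
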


\begin{proof}
Set $A := (a_1 + (a_1' - a_1)T,\ldots,a_n + (a_n' - a_n)T)$.  Since $I/I^2 = \langle \overline{a_1},\ldots,\overline{a_n} \rangle$, we conclude that the classes of $a_i - (a_i' - a_i)T$ modulo $I[T]^2$ generate $I[T]/(I[T])^2$.  By appeal to Lemma~\ref{lem:liftexists}, we conclude that there exist elements $S \in I[T]$ and $B \in R[T]^n$ such that $(A,B,S) \in Q_{2n}(R[T])$.  Thus, $[(a,B(0),S(0))] = [(a',B(1),S(1))]$ in $\pi_0(\Singaone Q_{2n})(R)$.  The elements $(a,S(0))$ and $(a,s)$ satisfy the hypotheses of Lemma~\ref{lem:movings} and the same statement holds for $(a',S(1))$ and $(a',s')$.  Thus, we conclude that these classes are also naively homotopic and conclude.
\end{proof}

\subsubsection*{Moving maps with target $Q_{2n}$}
In order to describe the abelian group structure on $\pi_0(\Singaone Q_{2n}(R))$ geometrically, it will be helpful to be able to move maps to $Q_{2n}$ into ``general position".  We now describe a procedure to do this, inspired by \cite[Lemma 4.3]{Mandal10} (or \cite[Corollary 2.14]{Bhatwadekar00} when $n=\mathrm{dim}(R)$).  If $(a,b,s) \in Q_{2n}(R)$ as above, then the ideal $\langle a,s \rangle$ need not have height $n$, however, Lemma~\ref{lem:moving} will demonstrate that $(a,b,s)$ is naively $\aone$-homotopic to an element of $Q_{2n}(R)$ for which the associated ideal {\em does} have height $n$.

\begin{lem}
\label{lem:moving}
Suppose $R$ is a Noetherian ring and $(a,b,s) \in Q_{2n}(R)$.  If $J_1,\ldots,J_r\subset R$ are ideals such that $\mathrm{dim}(R/J_i)\leq n-1$ for $i=1,\ldots,r$, then, there exists a sequence $\mu=(\mu_1,\ldots,\mu_n)\in R^n$ and an ideal
\[
N:=\langle a+\mu (1-s)^2,s+\mu b^t(1-s)\rangle,
\]
such that the following statements hold:
\begin{enumerate}[noitemsep,topsep=1pt]
\item the sequence $(a+(1-s)^2\mu,b(1-\mu b^t),s+\mu b^t(1-s))$ yields an element of $Q_{2n}(R)$;
\item the naive $\aone$-homotopy classes $[(a,b,s)]$ and $[(a+\mu (1-s)^2,b(1-\mu b^t),s+\mu b^t(1-s))]$ coincide;
\item $\mathrm{ht}(N)\geq n$; and
\item $J_i+N=R$ for $i=1,\ldots,r$.
\end{enumerate}
\end{lem}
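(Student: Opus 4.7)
The plan is to verify the four claims in sequence, with (1) and (2) reducing to direct algebraic computations and (3)--(4) requiring a careful Noetherian prime-avoidance argument.

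For claim (1), I would set $\alpha := \mu b^t \in R$ and exploit the relation $ab^t = s(1-s)$ to compute
\[
(a+\mu(1-s)^2)\cdot(b(1-\alpha))^{t} = (ab^t + \alpha(1-s)^2)(1-\alpha) = (1-s)(s+\alpha(1-s))(1-\alpha),
\]
which factors as $z(1-z)$ for $z := s + \alpha(1-s) = s + \mu b^t(1-s)$, verifying the defining equation of $Q_{2n}$. For claim (2), the identical computation performed over $R[t]$ with $\mu$ replaced by $t\mu$ shows that
\[
H(t) := (a + t\mu(1-s)^2,\, b(1-t\mu b^t),\, s + t\mu b^t(1-s))
\]
lies in $Q_{2n}(R[t])$, and $H$ is an explicit naive $\aone$-homotopy from $(a,b,s)$ at $t=0$ to the target tuple at $t=1$.

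For claims (3) and (4), two structural observations frame the problem. First, since $s + \mu b^t(1-s) \in N$, we have $s \in N + (1-s)R$, so $R = \langle s, 1-s\rangle \subseteq N + (1-s)R$; consequently $V(N) \subseteq D(1-s)$, and no prime containing $N$ contains $1-s$. Second, on $D(1-s)$, setting $\mu_i' := \mu_i + (1-s)^{-2} a_i \in R_{1-s}$ and using $ab^t = s(1-s)$, a short computation gives $s + \mu b^t(1-s) = (1-s)\sum_i b_i \mu_i'$, so the last generator of $N_{1-s}$ is redundant and $N_{1-s} = \langle \mu_1', \ldots, \mu_n'\rangle$ as an ideal of $R_{1-s}$ generated by $n$ elements. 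The task then reduces to choosing $\mu \in R^n$ such that $\langle \mu_1',\ldots,\mu_n'\rangle$ has height $n$ in $R_{1-s}$ and is comaximal with each $J_i R_{1-s}$.

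I would construct $\mu_1, \ldots, \mu_n$ inductively. At the $k$-th stage, assuming $\langle \mu_1',\ldots,\mu_{k-1}'\rangle$ has height $k-1$ in $R_{1-s}$, the primes to avoid with $\mu_k'$ are the finitely many minimal primes of $\langle \mu_1',\ldots,\mu_{k-1}'\rangle$ (each of height exactly $k-1$), together with, at the final stage $k=n$, the minimal primes of each $J_i R_{1-s}$ that do not contain $1-s$. For each such prime $\mathfrak{p}$, the condition $\mu_k' \in \mathfrak{p}$ translates to $(1-s)^2 \mu_k + a_k \in \mathfrak{p}$, i.e., $\mu_k$ has a single forbidden residue modulo $\mathfrak{p}$. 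The main obstacle lies here: one must produce $\mu_k \in R$ simultaneously avoiding the finitely many resulting cosets. This is a standard Noetherian prime-avoidance step, easily carried out when the residue fields $R/\mathfrak{p}$ are infinite -- in particular in the paper's geometric setting, where $R$ is a finitely generated algebra over an infinite field $k$, so that one may simply take $\mu_k \in k$. Granted this step, the inductive construction produces a $\mu$ for which $\mathrm{ht}(N) = n$ and $N + J_i = R$ for each $i$, completing the proof.
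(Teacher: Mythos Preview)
Your treatment of (1) and (2) is correct and matches the paper's: both arguments produce the explicit $\aone$-homotopy $H(t)=(a+t\mu(1-s)^2,\,b(1-t\mu b^t),\,s+t\mu b^t(1-s))$. Your reduction for (3)--(4) is also sound: the observations $V(N)\subset D(1-s)$ and $N_{1-s}=\langle\mu_1',\ldots,\mu_n'\rangle$ with $\mu_i'=\mu_i+(1-s)^{-2}a_i$ are correct and give a clean reformulation.

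However, your inductive construction has a genuine gap in the handling of (4). Avoiding the minimal primes of $J_iR_{1-s}$ \emph{only at the final stage $k=n$} does not force $N+J_i=R$. After that step you know only that the image $\bar\mu_n'$ in $R_{1-s}/J_iR_{1-s}$ lies in no minimal prime, i.e.\ $\langle\bar\mu_n'\rangle$ has height $\geq 1$; you have no control whatsoever over $\bar\mu_1',\ldots,\bar\mu_{n-1}'$ there, so $\langle\bar\mu_1',\ldots,\bar\mu_n'\rangle$ can easily be a proper ideal of a ring of dimension $n-1$. What is actually required is that at \emph{each} stage $k$ one also avoids the minimal primes (of height $k-1$) of $\langle\mu_1',\ldots,\mu_{k-1}'\rangle+J_iR_{1-s}$ in $R_{1-s}/J_iR_{1-s}$, so that after $n$ steps the image of $\langle\mu_1',\ldots,\mu_n'\rangle$ has height $\geq n>\dim(R_{1-s}/J_iR_{1-s})$ and is therefore the unit ideal. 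A second issue is the one you yourself flag: the ``coset avoidance'' step (find $\mu_k\in R$ with $\mu_k\not\equiv -(1-s)^{-2}a_k\pmod{\mathfrak p}$ for finitely many primes $\mathfrak p$) is not a theorem for arbitrary Noetherian $R$; your fallback to infinite residue fields proves strictly less than the stated lemma.

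The paper sidesteps both difficulties by invoking the Eisenbud--Evans basic element theorem in Plumstead's generalized-dimension-function form. One takes $\mathcal A=\{\mathfrak p\in D(1-s):\mathrm{ht}(\mathfrak p)\leq n-1\}$ and $\mathcal B=\bigcup_i V(J_i)\cap D(1-s)$, patches the two natural dimension functions to a single $\delta$ with $\delta\leq n-1$ on $\mathcal A\cup\mathcal B$, observes that $(a,(1-s)^2)$ is unimodular on $\mathcal A\cup\mathcal B$ (since $(1-s)$ is a unit there), and concludes directly that some $\mu\in R^n$ makes $a+\mu(1-s)^2$ unimodular on $\mathcal A\cup\mathcal B$. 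One then checks that any prime containing $N$ lies in $D(1-s)$ and contains $\langle a+\mu(1-s)^2\rangle$, hence cannot lie in $\mathcal A\cup\mathcal B$, giving (3) and (4) simultaneously. This is exactly the machinery designed to absorb both the bookkeeping of your induction and the finite-residue-field subtlety, and it delivers the lemma over an arbitrary Noetherian ring.
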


\begin{proof}
We appeal to the results of Eisenbud-Evans \cite{Eisenbud73}, as generalized by Plumstead \cite[p. 1420]{Plumstead83}. To this end, let $\mathcal A\subset \Spec R$ be the set of prime ideals $\mathfrak p\subset R$ such that $(1-s)\not\in \mathfrak p$ and $\mathrm{ht}(\mathfrak p)\leq n-1$. Let moreover $\mathcal B_i:=V((J_i)_{(1-s)})\subset \Spec R_{(1-s)}\subset \Spec R$ for $i=1,\ldots,r$ and $\mathcal B=\cup_i \mathcal B_i$.  Since $R$ is Noetherian, the restriction of the usual dimension function on $\Spec (R_{(1-s)})$ to $\mathcal{A}$ is a generalized dimension function $d: \mathcal{A} \to \N$ in the sense of \cite[Definition p. 1419]{Plumstead83} (cf. \cite[Example 1]{Plumstead83}).  Likewise, let  $d_i:\mathcal B_i\to \N$ be the usual dimension function on $V((J_i)_{(1-s)})$.  As in \cite[Example 2]{Plumstead83}, we obtain a generalized dimension function $\delta:\mathcal A\cup \mathcal B\to \N$ such that $\delta(\mathfrak p)\leq n-1$ for any $\mathfrak p\in \mathcal A\cup \mathcal B$.

Now, we apply the Eisenbud-Evans theorems to the finitely generated free $R$-module $R^{n}$ with the generalized dimension function $\delta$ on $\mathcal{A} \cup \mathcal{B}$.  Then, if $(a,(1-s)^2)$ is unimodular at $\mathcal A\cup \mathcal B$, we conclude that there exists a sequence $\mu=(\mu_1,\ldots,\mu_n)\in R^n$ such that the row  $(a_1+\mu_1(1-s)^2,\ldots,a_n+\mu_n(1-s)^2)$ is unimodular at $\mathcal A\cup \mathcal B$ (a priori the Eisenbud-Evans results are formulated in terms of basic elements, however the two notions coincide for finitely generated free modules \cite[Lemma 1]{Eisenbud73}).

To establish points (1) and (2), observe that if we set $A=a+T(1-s)^2\mu\in R[T]^n$, then
\[
Ab^t=ab^t+T(1-s)^2\mu b^t=s(1-s)+T(1-s)^2\mu b^t=(1-s)-(1-s)^2(1-T\mu b^t).
\]
Multiplying both sides by $(1-T\mu b^t)$, we obtain the equality
\[
Ab^t(1-T\mu b^t)=(1-s)(1-T\mu b^t)-(1-s)^2(1-T\mu b^t)^2.
\]
Setting $B=(1-T\mu b^t)b$, one deduces that $(A,B,(1-s)(1-T\mu b^t))\in Q_{2n}(R[T])$. It follows that $(A,B,1-(1-s)(1-T\mu b^t))=(A,B,s+T\mu b^t(1-s))\in Q_{2n}(R[T])$.  The first two points then follow by evaluating the homotopy at $T = 0,1$.

To establish points (3) and (4), observe that
\begin{eqnarray*}
\langle a+\mu (1-s)^2\rangle & = & \langle a+\mu (1-s)^2,(1-s)(1-\mu b^t)\rangle\cap \langle a+\mu (1-s)^2,s+\mu b^t(1-s)\rangle \\
 & = & \langle a+\mu (1-s)^2,(1-s)(1-\mu b^t)\rangle\cap N.
\end{eqnarray*}
If $\mathfrak p$ is a prime ideal such that $N\subset \mathfrak p$, then it follows that $s+\mu b^t(1-s)\in \mathfrak p$ and therefore that $1-(s+\mu b^t(1-s))=(1-s)(1-\mu b^t)\not\in \mathfrak p$. Consequently, $(1-s)\not\in \mathfrak p$. Moreover, $\langle a+\mu (1-s)^2\rangle\subset N\subset \mathfrak p$. As $(a_1+\mu_1(1-s)^2,\ldots,a_n+\mu_n(1-s)^2)$ is unimodular at $\mathcal A\cup \mathcal B$, it follows that $\mathfrak p\not \in \mathcal A\cup \mathcal B$. Therefore, $\mathfrak p\not\in \cup_i V(J_i)$ and condition (4) follows. Similarly, $\mathfrak p\not\in \mathcal A$ and condition (3) is also satisfied.
\end{proof}

\subsection{A geometric description of the composition on cohomotopy groups}
\label{ss:geometricdescription}
In order to describe the group structure on $[U,Q_{2n}]_{\aone}$ explicitly, it is convenient to use the description of the product given in Definition~\ref{defn:secondcomposition}; recall that this composition coincides with that described in Proposition~\ref{prop:groupstructureI} by appealing to Proposition~\ref{prop:comparingcompositions}.  More precisely, given $f,g: U \to Q_{2n}$, we consider the following diagram
\[
\xymatrix{
                  &                                 &  Q_{2n} \vee Q_{2n} \ar[r]^-{\nabla}\ar[d] & Q_{2n} \\
U \ar[r]^-{\Delta} &  U \times U \ar[r]^-{f \times g} &  Q_{2n} \times Q_{2n} &
}
\]
In order to make these constructions more explicit, we will provide a model of $Q_{2n} \vee Q_{2n}$ as a smooth affine scheme.  Given such a model, appeal to Theorem~\ref{thm:naivevsrefined} allows us to deduce that the maps $Q_{2n} \vee Q_{2n} \to Q_{2n} \times Q_{2n}$ and $Q_{2n} \vee Q_{2n} \to Q_{2n}$ may be represented by explicit morphisms of smooth affine schemes, unique up to naive $\aone$-homotopy.

\subsubsection*{A smooth model of $Q_{2n} \vee Q_{2n}$}
As discussed before Lemma~\ref{lem:explicitgenerator}, recall that $Z_n$ is the closed subscheme of $Q_{2n}$ defined by $x_1 = \cdots = x_n = z = 0$ and the open complement $X_{2n} := Q_{2n} \setminus Z_{2n}$ is $\aone$-contractible by \cite[Theorem 3.1.1]{ADF}.  Since $X_{2n}$ is $\aone$-contractible, the inclusion of the base-point in $X_{2n}$ yields a commutative diagram of smooth $k$-schemes of the form
\[
\xymatrix{
	Q_{2n} \ar[d]& \ar[l] \ast \ar[r]\ar[d]& Q_{2n} \ar[d] \\
	Q_{2n} \times X_{2n} & \ar[l] X_{2n} \times X_{2n} \ar[r]& X_{2n} \times Q_{2n},
}
\]
where all vertical morphisms are $\aone$-weak equivalences.  In particular, the evident map of homotopy colimits is also an $\aone$-weak equivalence.  

Since all the horizontal morphisms in this diagram are cofibrations, the homotopy colimit of each row coincides with the actual colimit.  The colimit of the top row is, by definition, the wedge sum.  On the other hand, the colimit of the bottom row is simply the union in $Q_{2n} \times Q_{2n}$ of $Q_{2n} \times X_{2n}$ and $X_{2n} \times Q_{2n}$, i.e., it is the open subscheme of the product $Q_{2n} \times Q_{2n}$ whose closed complement is $Z_n \times Z_n$.  The induced map of colimits thus yields a morphism $Q_{2n} \vee Q_{2n} \to (Q_{2n} \times Q_{2n})\setminus (Z_n \times Z_n)$ that we will refer to as the natural inclusion.  The next result is an immediate consequence of these observations. 

\begin{lem}
\label{lem:modelforwedgesum}
Fix a base-point in $X_{2n}$ and point $Q_{2n}$ by its image.   The coproduct-to-product map $Q_{2n} \vee Q_{2n} \to Q_{2n} \times Q_{2n}$ factors as
\[
Q_{2n} \vee Q_{2n} \longrightarrow (Q_{2n} \times Q_{2n})\setminus (Z_n \times Z_n) \longrightarrow Q_{2n} \times Q_{2n};
\]
where the first morphism is the natural inclusion and is an $\aone$-weak equivalence, and the second morphism is an open immersion.
\end{lem}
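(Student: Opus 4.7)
The plan is to compute $(Q_{2n}\times Q_{2n})\setminus (Z_n\times Z_n)$ up to $\aone$-weak equivalence by realising it as a Zariski union of two ``slabs'' and then exploiting the $\aone$-contractibility of $X_{2n}$.

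First, I would observe that the basepoint $*\in Q_{2n}$ satisfies $z=1$, hence lies in $X_{2n} := Q_{2n}\setminus Z_n$. Consider the two open subschemes
\[
U_1 := Q_{2n}\times X_{2n}, \qquad U_2 := X_{2n}\times Q_{2n}
\]
of $Q_{2n}\times Q_{2n}$. An elementary set-theoretic check gives $U_1\cup U_2 = (Q_{2n}\times Q_{2n})\setminus (Z_n\times Z_n)$ and $U_1\cap U_2 = X_{2n}\times X_{2n}$. Since a Zariski cover by two opens produces a Nisnevich distinguished square, the diagram
\[
\xymatrix{
X_{2n}\times X_{2n} \ar[r]\ar[d] & Q_{2n}\times X_{2n} \ar[d] \\
X_{2n}\times Q_{2n} \ar[r] & (Q_{2n}\times Q_{2n})\setminus (Z_n\times Z_n)
}
\]
is a homotopy pushout in the Nisnevich local model structure.

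Next, since $X_{2n}$ is $\aone$-contractible by \cite[Theorem 3.1.1]{Asok14} and $\Laone$ commutes with finite products, both projections $Q_{2n}\times X_{2n}\to Q_{2n}$ and $X_{2n}\times Q_{2n}\to Q_{2n}$ are $\aone$-weak equivalences and $X_{2n}\times X_{2n}$ is $\aone$-contractible. Applying $\Laone$ to the square therefore identifies it up to $\aone$-weak equivalence with the pointed pushout $Q_{2n}\sqcup_{*}Q_{2n} = Q_{2n}\vee Q_{2n}$, which produces the desired $\aone$-weak equivalence. Finally, the natural pointed map $Q_{2n}\vee Q_{2n}\to Q_{2n}\times Q_{2n}$ factors through $(Q_{2n}\times Q_{2n})\setminus (Z_n\times Z_n)$ because every point of the wedge has at least one coordinate equal to $*\in X_{2n}$; the two wedge summands embed as $Q_{2n}\times\{*\}\hookrightarrow U_1$ and $\{*\}\times Q_{2n}\hookrightarrow U_2$, which are precisely the canonical maps into the homotopy pushout, so the factorisation coincides with the Mayer--Vietoris comparison constructed above.

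I do not foresee a real obstacle: the argument reduces to a Zariski Mayer--Vietoris computation combined with the $\aone$-contractibility input, and the only care required is tracking the basepoint, which is handled automatically since $*\in X_{2n}\subset U_1\cap U_2$.
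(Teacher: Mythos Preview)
Your proposal is correct and is essentially the same argument as the paper's: both identify $(Q_{2n}\times Q_{2n})\setminus(Z_n\times Z_n)$ with the (homotopy) pushout of $Q_{2n}\times X_{2n}\leftarrow X_{2n}\times X_{2n}\to X_{2n}\times Q_{2n}$ and then use the $\aone$-contractibility of $X_{2n}$ to compare this span, via the base-point inclusions, with $Q_{2n}\leftarrow\ast\to Q_{2n}$. The only cosmetic difference is that the paper phrases the homotopy pushout step by noting the horizontal maps are cofibrations (so the honest colimit computes the homotopy colimit), whereas you invoke Zariski/Nisnevich descent; both justifications are valid.
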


\subsubsection*{A smooth affine model of $Q_{2n} \vee Q_{2n}$}
Granted that $Q_{2n} \vee Q_{2n}$ admits an explicit model as a smooth scheme, we may use the Jouanolou trick to obtain a smooth affine model. More precisely, the Jouanolou trick shows that there exists an affine vector bundle torsor $\phi: \widetilde{Q_{2n} \vee Q_{2n}} \to (Q_{2n} \times Q_{2n}) \setminus (Z_n \times Z_n)$  \cite[Definition 4.2]{WeibelHAK}.  The following example gives a general construction of Jouanolou devices.

\begin{construction}
\label{construction:explicitjouanolou}
Suppose $X$ is any regular affine scheme, and $Z \subset X$ is a closed subscheme equipped with a choice $f_1,\ldots,f_r$ of generators of the ideal corresponding to $Z$.  There is an induced morphism $f: X \to {\mathbb A}^r$ such that $f^{-1}(0) = Z$.  Pulling back the morphism $Q_{2r-1} \to {\mathbb A}^r \setminus 0$ (which one can check is a torsor under a trivial vector bundle of rank $r-1$) along $f$ one obtains a torsor under a vector bundle $\widetilde{X \setminus Z} \to X \setminus Z$.  More explicitly, the scheme $\widetilde{X \setminus Z}$ is the closed subscheme of $X \times {\mathbb A}^r$ defined by the equation $\sum_i f_i y_i = 1$ and the map to $X \setminus Z$ is induced by projection onto the first factor.

We now take $X = Q_{2n} \times Q_{2n}$ and $Z = Z_n \times Z_n$, which has codimension $2n$ in $X$, but is cut out by $2n+2$-equations.  We embed $Q_{2n} \times Q_{2n}$ in ${\mathbb A}^{2n+1} \times {\mathbb A}^{2n+1}$ with coordinates $(x_1,\ldots,x_n,y_1,\ldots,y_n,z) = (x,y,z)$ and $(x_1',\ldots,x_n',y_1',\ldots,y_n',z') = (x',y',z')$.  If we give ${\mathbb A}^{2n+2}$ the coordinates
\[
(u_1,\ldots,u_n,u_{n+1},u_1',\ldots,u_n',u_{n+1}') = (u,u_{n+1},u',u_{n+1}'),
\]
the construction above yields a closed subscheme of $X \times X\times {\mathbb A}^{2n+2}$ with coordinates $(x,y,z,x',y',z',u,u_{n+1},u',u'_{n+1})$ and coordinate ring:
\[
\begin{split}k[x,y,z,x',y',z',u,u_{n+1},u',u'_{n+1}]/\langle &xy^t = z(1-z), \\ &x'(y')^t = z'(1-z'),\\  &ux^t + u'(x')^t + u_{n+1}z + u'_{n+1}z' = 1\rangle \end{split}.
\]
The projection morphism for the Jouanolou device in these coordinates is induced by the projection onto $x,y,z,x',y',z'$; in particular, it has relative dimension $2n+1$.  Henceforth, $\widetilde{Q_{2n} \vee Q_{2n}}$ will be the Jouanolou device just described.
\end{construction}

Recall that $Q_{2n}$ is pointed with base point $(0,\ldots,0,0\ldots,0,1)$ and thus the product has an induced base-point.  The maps $Q_{2n} \to Q_{2n} \times Q_{2n}$ obtained by inclusion of the base-point in one component factor through closed immersions $i_l,i_r: Q_{2n} \to (Q_{2n} \times Q_{2n})\setminus (Z_n \times Z_n)$.  We may lift $i_l$ and $i_r$ through morphisms $\tilde{i_l},\tilde{i_r}: Q_{2n} \to \widetilde{Q_{2n} \vee Q_{2n}}$ that agree on the base-point.  Explicitly, we may define $\tilde{i_l}$ in the coordinates $x,y,z,x',y',z',u,u_{n+1},u',u'_{n+1}$ by setting $x' = y' = u = u' = 0$, $u_{n+1} = 0$, and $z' = u'_{n+1} = 1$.  Similarly, we define $\tilde{i_r}$ by setting $x = y = u = u' = 0$, $u'_{n+1}=0$ and $z = u_{n+1} = 1$.

\begin{rem}
The pullback of a torsor under a vector bundle is a torsor under a vector bundle, and since torsors under vector bundles on affine schemes are trivial, they become vector bundles after choice of a section.  Thus, the pullback of $\phi$ along either $i_l$ or $i_r$ is isomorphic to a vector bundle of rank $2n+1$ on $Q_{2n}$ and by suitably choosing sections we may obtain the required lifts.
\end{rem}

\begin{lem}
\label{lem:naivehomotopieslifttojouanolou}
If
\[
p: \widetilde{Q_{2n} \vee Q_{2n}} \longrightarrow (Q_{2n} \times Q_{2n})\setminus (Z_n \times Z_n)
\]
is the projection morphism in Construction~\ref{construction:explicitjouanolou}, then for any smooth affine $k$-scheme $U$ the induced map $\pi_0(\Singaone \widetilde{Q_{2n} \vee Q_{2n}}(U)) \to \pi_0(\Singaone (Q_{2n} \times Q_{2n}) \setminus (Z_n \times Z_n)(U))$ is a bijection.  In particular, any morphism of $k$-schemes $h: U \to (Q_{2n} \times Q_{2n}) \setminus (Z_n \times Z_n)$ lifts uniquely up to naive $\aone$-homotopy to a morphism of $k$-schemes $\tilde{h}: U \to \widetilde{Q_{2n} \vee Q_{2n}}$.
\end{lem}

\begin{proof}
The map $p$ is an affine vector bundle torsor by construction, and therefore the result is a special case of \cite[Lemma 4.2.4]{AsokHoyoisWendtII}.
\end{proof}

\subsubsection*{A scheme-theoretic model for the fold map $Q_{2n} \vee Q_{2n} \to Q_{2n}$}
We now describe a model of the fold map that admits a nice interpretation in ideal-theoretic terms; the explicit formulas we write down will be useful later.  Since $\widetilde{Q_{2n} \vee Q_{2n}}$ is a smooth affine $k$-scheme, the class of the fold map in $\nabla \in [Q_{2n} \vee Q_{2n},Q_{2n}]_{\aone}$ is, by means of Theorem~\ref{thm:naivevsrefined}, represented by a unique up to $\aone$-homotopy morphism of smooth $k$-schemes $\nabla: \widetilde{Q_{2n} \vee Q_{2n}} \to Q_{2n}$.

\begin{construction}
\label{construction:geometricfold}
We use the smooth affine scheme $\widetilde{Q_{2n} \vee Q_{2n}}$ from Construction~\ref{construction:explicitjouanolou}.  We now describe a morphism whose $\aone$-homotopy class coincides with that of the fold map.  Our construction is based on the idea made precise in Lemma~\ref{lem:naivehomotopyclassdependsonideal} that the naive $\aone$-homotopy class of a morphism $\varphi: \Spec R \to Q_{2n}$ defined by $(a,b,s)$ is essentially determined by the ideal $\langle a,s \rangle$.

Consider the ideals $I := \langle x, z \rangle$ and $I' = \langle x',z' \rangle$.  Set $J := II'$, i.e., $J$ is the ideal generated by $x_ix_j'$, $x_iz'$, $x_i'z$ and $zz'$ as $i$ and $j$ range from $1$ through $n$.

The equation $ux^t+u_{n+1}z+u'(x^\prime)^t+ u'_{n+1} z^\prime=1$ implies that the ideals $I$ and $I'$ are comaximal and therefore $II' = I \cap I^\prime$.  The Chinese remainder theorem guarantees the existence of an isomorphism
\[
J/J^2\simeq J/IJ\times J/I^\prime J.
\]
We claim that the inclusions $J\subset I$ and $J \subset I'$ induce isomorphisms $J/IJ \isomt I/I^2$ and $J/I'J \isomt I'/I'^2$.  We treat the case $J/IJ \to I/I^2$; define a homomorphism $I \to J/IJ$ by sending $a \in I$ to the image of $a (u'(x')^t + u_{n+1}'z')$ in $J/IJ$.  One checks that this homomorphism factors through a homomorphism $I/I^2 \to J/IJ$ inverse to the homomorphism $J/IJ \to I/I^2$ induced by $J \subset I$.

Now, set
\[
c_i:= x_i^\prime (ux^t+u_{n+1}z)^2+x_i(u'(x^\prime)^t+ u'_{n+1} z^\prime)^2.
\]
We claim that $J/J^2=(\overline c_1,\ldots,\overline c_n)$.  By means of the identification $J/J^2 \cong J/IJ \times J/I'J \cong I/I^2 \times I'/(I')^2$, it suffices to show that the images of $\overline{c}_i$ under the isomorphism of the previous paragraph generate $I/I^2 \times I'/(I')^2$.  To this end, observe that
\[
\bar{c}_i = \overline{x_i(u'(x')^t + u_{n+1}'z')^2} \bmod IJ.
\]
Since $x_i(u'(x')^t + u_{n+1}'z') \in I^2$, we see that the image of $\bar{c}_i$ in $I/I^2$ coincides with $\bar{x}_i$.  Similarly, the image of $\bar{c}_i$ in $I'/(I')^2$ is $\bar{x}'_i$.  Likewise, one sees that $(ux^t + u_{n+1}z)c_i$ is sent to $\bar{x}'_i$ in $I'/(I')^2$, while its class in $IJ$ is trivial and a similar statement holds for $(u'(x')^t + u_{n+1}'z')c_i$.

We saw above that $c \equiv x \bmod I^2$ and $c \equiv x' \bmod (I')^2$.  Therefore, $I \equiv \langle c \rangle + I^2$ and $I' \equiv \langle c \rangle + {I'}^2$.  By appeal to Lemma~\ref{lem:liftexists} we conclude that:
\begin{itemize}[noitemsep,topsep=1pt]
\item[i)] there exist elements $w \in I$ and $w' \in I'$ such that $I = \langle c,w \rangle$ and $I' = \langle c,w' \rangle$, and
\item[ii)] there exist $n$-tuples of regular functions $d,d'$ on the explicit Jouanolou device such that $w(1-w) = cd^t$ and $w'(1-w') = c{d'}^t$.
\end{itemize}
Then, one can check $J = \langle c,ww' \rangle$ and the equation $ww'(1-ww') = c \delta^t$ is satisfied with $\delta = (c(d^\prime)^t)d+(w^\prime)^2d+w^2d^\prime$. The sequence $(c,\delta,ww')$ corresponds to a morphism $\nabla$ from the Jouanolou device to $Q_{2n}$.
\end{construction}

The next result follows immediately from Construction~\ref{construction:geometricfold} by observing that the restriction of the morphism $\widetilde{Q_{2n} \vee Q_{2n}} \to Q_{2n}$ defined by $(c,\delta,ww')$ along either closed immersion $\tilde{i_l}$ or $\tilde{i_r}$ is the identity map $Q_{2n} \to Q_{2n}$.

\begin{lem}
\label{lem:geometricfold}
The map $\nabla: \widetilde{Q_{2n} \vee Q_{2n}} \to Q_{2n}$ described in \textup{Construction~\ref{construction:geometricfold}} is a model for the fold map.
\end{lem}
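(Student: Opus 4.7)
The plan is to verify directly that the morphism $\nabla$ of Construction~\ref{construction:geometricfold}, when pre-composed with each of the two closed immersions $i_l, i_r: Q_{2n} \to \widetilde{Q_{2n} \vee Q_{2n}}$ obtained as the zero sections of the pullback rank-$(n-1)$ vector bundles along the factor inclusions $Q_{2n} \to (Q_{2n} \times Q_{2n}) \setminus (Z_n \times Z_n)$, recovers the identity morphism of $Q_{2n}$. Since the $\aone$-weak equivalence $Q_{2n} \vee Q_{2n} \to \widetilde{Q_{2n} \vee Q_{2n}}$ discussed before Construction~\ref{construction:geometricfold} intertwines the two factor inclusions of the wedge with the zero sections $i_l, i_r$, and since the fold map is by definition the unique map out of $Q_{2n} \vee Q_{2n}$ restricting to the identity on each summand, these two verifications (together with the identification of $\aone$-homotopy classes of maps afforded by Theorem~\ref{thm:naivevsrefined}) identify $\nabla$ with the fold map.

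Using the coordinates of Example~\ref{ex:explicitjouanolou}, restriction along $i_l$ amounts to substituting $x' = 0$, $y' = 0$, $z' = 1$; a convenient zero section of the pullback vector bundle is obtained by further setting $u = 0$, $u_{n+1} = 0$, $v = 0$, which forces $v_{n+1} = 1$ in view of the Jouanolou defining equation. With these substitutions the two quantities $\beta := u x^t + u_{n+1} z$ and $\alpha := v (x')^t + v_{n+1} z'$ (which sum to $1$ by construction) reduce to $\beta = 0$ and $\alpha = 1$, so the formula for $c_i$ yields $c_i = x_i' \beta + x_i \alpha = x_i$. The ideal $I' = \langle x', z' \rangle$ collapses to the unit ideal, and within the choices afforded by Lemma~\ref{lem:liftexists} one may take $w = z$ with $d = y$ (since $z(1-z) = x y^t = c d^t$) and $w' = 1$ with $d' = 0$. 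Substituting these into the formulas for $\delta$ and $ww'$ given in Construction~\ref{construction:geometricfold} produces $\delta = y$ and $ww' = z$, so $\nabla \circ i_l$ sends $(x,y,z)$ to $(x,y,z)$ on the nose.

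The argument for $i_r$ is strictly symmetric, obtained by swapping the roles of primed and unprimed coordinates. I expect the only subtlety to be that the lifts $w, w', d, d'$ produced by Lemma~\ref{lem:liftexists} are not canonical; however, any two admissible choices generate the same ideal $J$ and induce the same classes in $J/J^2$, so Lemma~\ref{lem:naivehomotopyclassdependsonideal} guarantees that any choice yields the same $\aone$-homotopy class. Consequently it suffices to exhibit one convenient choice after restriction, and the explicit substitution above does exactly this.
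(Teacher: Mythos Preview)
Your proof is correct and follows the same approach as the paper's: the paper's argument is the one-line observation that the restriction of the constructed $\nabla$ along either $i_l$ or $i_r$ is the identity on $Q_{2n}$, and you carry this out in detail. Your handling of the non-canonical lifts $w,w',d,d'$ via Lemma~\ref{lem:naivehomotopyclassdependsonideal} is a genuine and necessary clarification---since those choices are made on the Jouanolou device rather than after restriction, one really only obtains the identity up to naive $\aone$-homotopy, and your appeal to Lemma~\ref{lem:naivehomotopyclassdependsonideal} (with $a=a'=x$ and $I=\langle x,z\rangle$) is exactly what justifies this.
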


\subsubsection*{A geometric lift}
Suppose that $U$ is a smooth affine $k$-scheme of dimension $d \leq 2n-2$ and $f,g: U \to Q_{2n}$.  We consider the map $(f \times g)\circ \Delta: U \to Q_{2n} \times Q_{2n}$.  While $(f \times g)\circ \Delta$ does not necessarily factor through $(Q_{2n} \times Q_{2n}) \setminus (Z_n \times Z_n)$, we now show that, up to replacing $f$ and $g$ by naively $\aone$-homotopic maps, such a lift does exist.  In fact, we will establish slightly more: we will show that we can choose $f'$ and $g'$ such that $(f')^{-1}(Z_n)$ and $(g')^{-1}(Z_n)$ are disjoint, i.e., the corresponding ideals in $U$ are comaximal.

\begin{lem}
\label{lem:generalpositionmaps}
Suppose $U$ is a smooth affine $k$-scheme of dimension $d \leq 2n-2$ and $f,g: U \to Q_{2n}$.
\begin{enumerate}[noitemsep,topsep=1pt]
\item There exist $f',g': U \to Q_{2n}$ such that $f$ is naively $\aone$-homotopic to $f'$, $g$ is naively $\aone$-homotopic to $g'$ and the morphism $(f' \times g')\circ \Delta$ factors through $(Q_{2n} \times Q_{2n}) \setminus (Z_n \times Z_n)$.
\item If $f''$ and $g''$ are another pair as in Point (1), then the $\aone$-homotopy classes of the morphisms $[(f'' \times g'')\circ \Delta]$ and $[(f' \times g')\circ \Delta]$ coincide in $[U,Q_{2n} \vee Q_{2n}]_{\aone}$ (under the $\aone$-weak equivalence of \textup{Lemma~\ref{lem:modelforwedgesum}}).
\end{enumerate}
\end{lem}

\begin{proof}
Suppose $U = \Spec R$ for some $k$-algebra $R$.  Morphisms $f,g: U \to Q_{2n}$ correspond to sequences $(a,b,s)$, $a = (a_1,\ldots,a_n)$, $b = (b_1,\ldots,b_n)$ and $(c,d,t)$, $c=(c_1,\ldots,c_n)$, $d = (d_1,\ldots,d_n)$ of elements of $R$.

Appealing to Lemma~\ref{lem:moving} with all $J_i$ taken equal to the unit ideal we may replace $g$ by a naively $\aone$-homotopic map $g'$, such that the associated ideal $I_2'$ has height $\geq n$.  Next, we want to show that by replacing $f$ by a naively $\aone$-homotopic map if necessary, we may assume that $f^{-1}(Z_n)$ and $g^{-1}(Z_n)$ are disjoint subschemes, i.e., that $f\times g$ misses $Z_n\times Z_n$ in $Q_{2n}\times Q_{2n}$.  To this end, set $J_1 = I_2$ (and take $J_i$ to be the unit ideal otherwise).  Since the height of $J_1$ is assumed $\geq n$, we see that $\dim R/J_1 \leq 2n-2 - n = n-2$.  The existence of the required homotopy is then guaranteed by Lemma~\ref{lem:moving}.

Since $f$ is naively $\aone$-homotopic to $f'$ and $g$ is naively $\aone$-homotopic to $g'$, we conclude that $(f \times g)\circ \Delta$ is naively $\aone$-homotopic to $(f' \times g')\circ \Delta$.  Therefore, if $f''$ and $g''$ are another such pair, the fact that $[(f' \times g')\circ \Delta]$ and $[(f'' \times g'')\circ \Delta]$ coincide in $[U,Q_{2n} \vee Q_{2n}]_{\aone}$ follows immediately by appeal to Proposition~\ref{prop:groupstructureII}.
\end{proof}

Consider the following diagram:
\[
\xymatrix{
& & \widetilde{Q_{2n} \vee Q_{2n}} \ar[r]^-{\nabla}\ar[d]^{p} & Q_{2n}; \\
& & (Q_{2n} \times Q_{2n}) \setminus (Z_n \times Z_n)\ar[d] & \\
U \ar[r]^-{\Delta}& U \times U \ar[r]^-{f \times g} & Q_{2n} \times Q_{2n}, &
}
\]
where the morphisms $p$ and $\nabla$ in this diagram are described in Lemmas~\ref{lem:naivehomotopieslifttojouanolou} and \ref{lem:geometricfold}.  Lemma~\ref{lem:generalpositionmaps} shows that, possibly after replacing $f$ and $g$ by naively $\aone$-homotopic maps $f'$ and $g'$, the composite $(f \times g)\circ \Delta$ factors through $(Q_{2n} \times Q_{2n}) \setminus (Z_n \times Z_n)$.  Lemma~\ref{lem:naivehomotopieslifttojouanolou} then shows that such a morphism lifts uniquely up to naive $\aone$-homotopy through a morphism $\widetilde{(f' \times g')\circ \Delta}: U \to \widetilde{Q_{2n} \vee Q_{2n}}$.  The composite $\nabla \circ \widetilde{(f' \times g')\circ \Delta}$ thus defines a morphism $U \to Q_{2n}$.  By tracing through construction Construction~\ref{construction:geometricfold}, we will now give an algebraic description of the naive homotopy class of $\nabla \circ \widetilde{(f' \times g')\circ \Delta}$.

\begin{construction}
\label{construction:geometriccomposition}
Suppose $U=\Spec R$ is a smooth affine $k$-scheme of dimension $d \leq 2n-2$ and $f,g: U\to Q_{2n}$.  Assume that $f$ and $g$ correspond with sequences $v=(a,b,s)$ and $v^\prime=(a^\prime,b^\prime,s^\prime)\in Q_{2n}(R)$ such that $I(v)=\langle a,s\rangle$ and $I(v^\prime)=\langle a^\prime,s^\prime\rangle$ are comaximal. Since $I(v)$ and $I(v^{\prime})$ are comaximal, we conclude that $I(v)I(v^\prime)=I(v)\cap I(v^\prime)$ and we set:
\[
J:=I(v)I(v^\prime)=I(v)\cap I(v^\prime).
\]

As $J/J^2\cong I(v)/I(v)^2\times I(v^\prime)/I(v^\prime)^2$, we see that there exist elements $c_1,\ldots,c_n\in J$ such that $c=(c_1,\ldots,c_n)$ satisfies $c\equiv a\pmod {I(v)}$, $c\equiv a^\prime\pmod {I(v^\prime)}$ and $J=\langle c\rangle \pmod{J^2}$. In particular, the following hold:
\begin{eqnarray*}
I(v)& = & \langle c\rangle + I(v)^2, \text{ and } \\
I(v^\prime) & = & \langle c\rangle + I(v^\prime)^2.
\end{eqnarray*}
By appeal to Lemma~\ref{lem:liftexists},  we conclude the following:
\begin{itemize}[noitemsep,topsep=1pt]
\item[i)] there exist elements $u\in I(v)$ and $u^\prime\in I(v^\prime)$ such that $I(v)=\langle c,u\rangle$ and $I(v^\prime)=\langle c,u^\prime\rangle$,
\item[ii)] there exist elements $d,d^\prime \in R^n$ such that the equations $u(1-u)=cd^t$ and $u^\prime(1-u^\prime)=c(d^\prime)^t$
  are satisfied.
\end{itemize}
Using these relations, we can write $J=\langle c,uu^\prime\rangle$ and the equation $uu^\prime(1-uu^\prime)=cx^t$, with $x=(c(d^\prime)^t)d+(u^\prime)^2d+u^2d^\prime$ is satisfied.   The $(2n+1)$-uple $(c,x,uu^\prime)$ yields a morphism $h:U\to Q_{2n}$.
\end{construction}

\begin{lem}
\label{lem:welldefined}
Suppose $k$ is a field having characteristic different from $2$ and $n \geq 2$ is an integer.  Suppose $U = \Spec R$ is a smooth affine $k$-scheme of dimension $d \leq 2n-2$ and we are given two morphisms $f,g: U \to Q_{2n}$.  There is a function
\[
\begin{split}
\tau: \pi_0(\Singaone Q_{2n}(R)) \times \pi_0(\Singaone Q_{2n}(R)) &\longrightarrow \pi_0(\Singaone Q_{2n}(R)), \\
([f],[g])&\longmapsto [h]
\end{split}
\]
defined as follows: choose $f'$ and $g'$ naively $\aone$-homotopic to $f$ and $g$ and with $(f')^{-1}(Z_n)$ and $(g')^{-1}(Z_n)$ disjoint and send $(f',g')$ to the output $h$ of \textup{Construction~\ref{construction:geometriccomposition}}.
\end{lem}

\begin{proof}
By appeal to Lemma~\ref{lem:generalpositionmaps}, we may always suppose that $(f')^{-1}(Z_n)$ and $(g')^{-1}(Z_n)$ are disjoint.  We now trace through Construction~\ref{construction:geometriccomposition} to see how the output depends on the chosen representatives.

Following the notation of  Construction~\ref{construction:geometriccomposition}, the element $h$ depends on the choices of elements $c,x \in R^n$ and $u,u' \in R^n$.  Pick $c' \in R^n$ such that $c' \equiv a (\bmod I(v))$, $c' \equiv a' (\bmod I(v'))$ and $J = \langle c' \rangle (\bmod J^2)$.  In that case, $I(v) = \langle c' \rangle + I(v)^2$ and $I(v') = \langle c ' \rangle + I(v')^2$.  From this we conclude that $c_i - c_i' \in I(v)^2$ and also that $c_i - c_i' \in I(v')^2$.

As in Construction~\ref{construction:geometriccomposition}, we build elements $\mu \in I(v)$ and $\mu' \in I(v')$ such that $I(v) = \langle c',\mu \rangle$ and $I(v') = \langle c', \mu' \rangle$ and elements $\delta, \delta' \in R^n$ such that $\mu(1-\mu) = c'\delta^t$ and $\mu'(1-\mu') = c'(\delta')^t$.  Then, $J = \langle c',\mu\mu' \rangle$ and by setting $x' = c' (\delta')^t \delta + (\mu')^2\delta+\mu^2\delta'$, the equation $\mu\mu'(1-\mu\mu') = c'(x')^t$ is satisfied.  By appeal to Lemma~\ref{lem:naivehomotopyclassdependsonideal}, one concludes that $(c,x,uu')$ and $(c',x',\mu\mu')$ yield the same class in $\pi_0(\Singaone Q_{2n}(R))$, irrespective of the choice of $\mu$, $\mu'$ and $x'$.
\end{proof}

Finally, we compare the output of Lemma~\ref{lem:welldefined} with the composition operation in $\pi_0(\Singaone Q_{2n}(R))$ defined in Proposition~\ref{prop:groupstructureI}.

\begin{thm}
\label{thm:concretetau}
Assume $k$ is a field having characteristic not eequal to $2$ and $U = \Spec R$ is a smooth affine $k$-scheme of dimension $d \leq 2n-2$.  The composition $[f],[g] \mapsto \tau([f],[g])$ on $\pi_0(\Singaone Q_{2n}(R))$ given in \textup{Lemma~\ref{lem:welldefined}} coincides with the operation from \textup{Proposition~\ref{prop:groupstructureI}}.
\end{thm}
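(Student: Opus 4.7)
The plan is to use Proposition~\ref{prop:comparingcompositions} to reduce the claim to identifying the Lemma~\ref{lem:welldefined} construction with the Borsuk-style operation of Definition~\ref{defn:secondcomposition}, and then to compute the latter on the nose using the explicit affine models developed in this section.

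First, I would fix representatives $f,g\colon U\to Q_{2n}$ of the given $\aone$-homotopy classes with disjoint preimages $f^{-1}(Z_n)$ and $g^{-1}(Z_n)$, as permitted by Lemma~\ref{lem:generalpositionmaps}; this replacement does not change either composition, by Lemma~\ref{lem:welldefined} and Proposition~\ref{prop:groupstructureII}. In this situation $(f\times g)\circ\Delta$ already factors through the open subscheme $(Q_{2n}\times Q_{2n})\setminus(Z_n\times Z_n)$, and by Lemma~\ref{lem:modelforwedgesum} this factorization represents the unique lift $\overline{(f,g)}\colon U\to Q_{2n}\vee Q_{2n}$ produced by Proposition~\ref{prop:groupstructureII}. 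Since $U$ is affine and the Jouanolou device $\phi\colon \widetilde{Q_{2n}\vee Q_{2n}}\to (Q_{2n}\times Q_{2n})\setminus(Z_n\times Z_n)$ of Example~\ref{ex:explicitjouanolou} is an affine vector bundle torsor, the factorization lifts, uniquely up to naive $\aone$-homotopy through sections of the torsor, to a morphism $\tilde{\ell}\colon U\to \widetilde{Q_{2n}\vee Q_{2n}}$. By Lemma~\ref{lem:geometricfold}, the class $\tau([f],[g])$ of Definition~\ref{defn:secondcomposition} is then represented by the composite $\nabla\circ\tilde{\ell}$, where $\nabla$ is the explicit morphism of Construction~\ref{construction:geometricfold}.

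It remains to pull the universal triple $(c,\delta,ww')$ defining $\nabla$ back along $\tilde{\ell}$ and match the outcome with Construction~\ref{construction:geometriccomposition}. Under this pullback, the universal coordinates $(x,y,z)$ and $(x',y',z')$ become $(a,b,s)$ and $(a',b',s')$, while $(u,u_{n+1},v,v_{n+1})$ becomes a tuple in $R^{2n+2}$ witnessing the comaximality relation $ua^t+v(a')^t+u_{n+1}s+v_{n+1}s'=1$ of $I(v)$ and $I(v')$. The pulled-back $c_i^{\sharp}$ visibly lies in $I(v)I(v')=J$ and, via the identification $J/J^2\cong I(v)/I(v)^2\times I(v')/I(v')^2$, satisfies the congruences required of the generating sequence in Construction~\ref{construction:geometriccomposition}; similarly $(w,w',d,d',\delta)$ pull back to admissible data $(u^{\sharp},(u')^{\sharp},d^{\sharp},(d')^{\sharp},x^{\sharp})$ for that construction. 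The well-definedness content of Lemma~\ref{lem:welldefined}, itself powered by Lemma~\ref{lem:naivehomotopyclassdependsonideal}, then identifies $[\nabla\circ\tilde{\ell}]$ with the class produced by Construction~\ref{construction:geometriccomposition}, completing the proof.

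The main obstacle is the explicit unwinding in the last paragraph: extracting from the Jouanolou relation above the exact congruences modulo $I(v)^2$ and $I(v')^2$ that Construction~\ref{construction:geometriccomposition} demands of $c^{\sharp}$, and verifying that distinct choices of $\tilde{\ell}$, which differ by a section of the underlying vector bundle, yield naively $\aone$-homotopic morphisms to $Q_{2n}$. Every other ingredient is formal from the material already assembled in this section.
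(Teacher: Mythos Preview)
Your proposal is correct and follows essentially the same route as the paper: reduce via Proposition~\ref{prop:comparingcompositions} to the Borsuk composition, move $f,g$ into general position so that $(f\times g)\circ\Delta$ lands in $(Q_{2n}\times Q_{2n})\setminus(Z_n\times Z_n)$, lift through the explicit Jouanolou device, and then pull back the formula of Construction~\ref{construction:geometricfold} to recover Construction~\ref{construction:geometriccomposition}. The only cosmetic difference is that where you justify the lift to $\widetilde{Q_{2n}\vee Q_{2n}}$ by the elementary observation that an affine-vector-bundle torsor over an affine base is a vector bundle (hence sections exist and any two are linearly homotopic), the paper instead invokes \cite[Lemma~4.2.4]{AsokHoyoisWendtII} to identify $\pi_0(\Singaone\widetilde{Q_{2n}\vee Q_{2n}}(U))$ with $\pi_0(\Singaone((Q_{2n}\times Q_{2n})\setminus(Z_n\times Z_n))(U))$; both arguments dispose of the ``different choices of $\tilde\ell$'' issue you flagged.
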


\begin{proof}
By Proposition~\ref{prop:comparingcompositions}, it suffices to prove that the composition $\tau$ is the same as that described in Definition~\ref{defn:secondcomposition}.

Suppose $f$ and $g$ are such that $f^{-1}(Z_n)$ and $g^{-1}(Z_n)$ are disjoint, which we can assume by Lemma~\ref{lem:generalpositionmaps}. In that case, we know that $(f \times g) \circ \Delta$ lifts, up to naive $\aone$-homotopy, to a morphism $h: U \to (Q_{2n} \times Q_{2n}) \setminus (Z_n \times Z_n)$.  By appeal to Lemma~\ref{lem:naivehomotopieslifttojouanolou}, the morphism $h$ lifts, uniquely up to naive $\aone$-homotopy to a morphism $\tilde{h}: U \to \widetilde{Q_{2n} \vee Q_{2n}}$.

We can write down an ``explicit" formula for the map $\tilde{h}$, at least up to naive $\aone$-homotopy.  Let $\varphi:k[x,y,z]/(xy^t=z(1-z))\to k[U]$ and $\psi:k[x,y,z]/(xy^t=z(1-z))\to k[U]$ be the maps corresponding to $f$ and $g$.  We write down a ring map from the coordinate ring displayed in Construction~\ref{construction:explicitjouanolou}.  The images of the variables $(x,y,z,x^\prime,y^\prime,z^\prime)$ are determined by $\varphi$ and $\psi$. By assumption, we know that the ideals $I:=\langle \varphi(x),\varphi(z)\rangle$ and $I^\prime:= \langle \psi(x^\prime),\psi(z^\prime)\rangle$ are comaximal. Therefore we can find $n+1$-tuples $(a,a_{n+1})$ (where, as above $a = (a_1,\ldots,a_n)$) and $(b,b_{n+1})$ such that
\[
a\varphi(x)^t + a_{n+1}\varphi(z) + b \psi(x')^t + b_{n+1} \psi(z') = 1.
\]
Then, sending $u\mapsto a$ and $v\mapsto b$, we obtain the required morphism.

Thus, to establish the result, it suffices to show that $[h]$ as in Lemma~\ref{lem:welldefined} is a model for the composite of this lift and the geometric $\nabla$.  Given the constructions above, the result follows by comparison with the explicit formula for $\nabla$ given in Construction~\ref{construction:geometricfold} combined with Lemma~\ref{lem:geometricfold}.
\end{proof}

\begin{rem}
The formula in Theorem~\ref{thm:concretetau} is motivated by van der Kallen's group structure on orbit sets of unimodular rows \cite{vdKallen83}.  In fact, it is possible to use the formula in Construction~\ref{construction:geometriccomposition} to establish directly that the composition so-defined is unital, associative and commutative.  Following this course likely produces a composition in greater generality than we have established here (e.g., presumably one could make a statement, suitably modified, that holds for $U$ the spectrum of an arbitrary commutative Noetherian ring).  Nevertheless, we have not pursued this approach because, from a theoretical point of view, we felt the homotopy theoretic techniques in Section~\ref{s:cohomotopy} give a nice ``explanation" for the formulas regarding composition and yield strong functoriality properties without significant additional work.
\end{rem}

\begin{rem}
\label{rem:vdk}
Assume $k$ is a field.  The variety $Q_{2n-1}$ is $\aone$-weakly equivalent to ${\mathbb A}^n \setminus 0$ and therefore $\aone$-$(n-2)$-connected.  As a consequence, if $X = \Spec R$ is a smooth $k$-scheme of dimension $d \leq 2n-4$, we conclude that $[X,Q_{2n-1}]_{\aone}$ inherits a group structure via Proposition \ref{prop:groupstructureI}.  By Theorem~\ref{thm:naivevsrefined}, we know that $\pi_0(\Singaone Q_{2n-1})(R) \to [X,Q_{2n-1}]_{\aone}$ is a bijection for any smooth affine $k$-scheme $X$.  On the other hand, by \cite[Theorem 2.1]{Fasel08b} we know that $\pi_0(\Singaone Q_{2n-1})(R) \cong Um_n(R)/E_n(R)$ and therefore the latter is equipped with an abelian group structure.  Now, van der Kallen showed $Um_n(R)/E_n(R)$ admits an abelian group structure \cite[Theorem 5.3]{vdKallen89}.  In fact, the two group structures actually coincide, and we will return to this in future work.
\end{rem}


\section{Segre classes and Euler class groups}
\label{s:Euler}
In this section, we connect the results of the previous two sections with classical results involving Euler class groups.  Section~\ref{ss:Segreclasshomomorphism} is devoted to construction of the homomorphism from Euler class groups to motivic cohomotopy; the main result is Theorem~\ref{thm:comparison}.  In Section~\ref{ss:applications} we establish some applications of this comparison result: we compare Euler class groups, weak Euler class groups, Chow--Witt groups and Chow groups in certain situations.

\subsection{Euler class groups and the Segre class homomorphism}
\label{ss:Segreclasshomomorphism}
We begin by recalling some key results of \cite{Fasel15b} related to Segre classes.  Then, we record the definition of Euler class groups \`a la Bhatwadekar--Sridharan, slightly adapted for our purposes in Definition~\ref{defn:eulerclassgroup}.  After recording a helpful moving lemma, we construct the Segre class homomorphism and then establish its main properties.  The main result of this section is Theorem \ref{thm:comparison}, which establishes a portion of Theorem \ref{thmintro:cohomotopy} in the introduction.

\subsubsection*{Universal Segre classes}
Once more suppose $R$ is a commutative ring and $f: \Spec R \to Q_{2n}$ is a morphism given by a sequence of elements $(x,y,z) \in R^n \times R^n \times R$.  If $I$ is the ideal $\langle x_1,\ldots,x_n,z \rangle$, then the quotient $I/I^2$ is generated by $\{\overline{x}_1,\ldots,\overline{x}_n\}$ and there is a surjective homomorphism $\omega_I:(R/I)^n\to I/I^2$.

\begin{defn}
\label{defn:segreclasses}
If $R$ is a commutative ring and $n\in\N$, then set
\begin{enumerate}[noitemsep,topsep=1pt]
\item $Ob_n(R) := $ the set consisting of pairs $(I,\omega_I)$ where $I$ a finitely generated ideal in $R$ and $\omega_I:(R/I)^n\to I/I^2$ a surjective homomorphism;
\item $Ob_n'(R) :=$ the subset of $Ob_n(R)$ consisting of pairs $(I,\omega_I)$ with $\operatorname{ht}(I) = n$.
\end{enumerate}
\end{defn}

We now review a key result from \cite{Fasel15b}; we include the proof for the convenience of the reader.

\begin{thm}
\label{thm:segre}
Suppose $(I,\omega_I) \in Ob_n(R)$.
\begin{enumerate}[noitemsep,topsep=1pt]
\item There exist elements $a_1,\ldots,a_n,s \in I$ and $b_1,\ldots,b_n \in R$ such that $I = \langle a,s \rangle$ and $(a,b,s) \in Q_{2n}(R)$.
\item If $(a',b',s') \in R^{n} \times R^n \times R$ is any another $(2n+1)-$tuple of elements such that $a_1',\ldots,a_n',s' \in I$, $I = \langle a',s' \rangle$ and $(a',b',s') \in Q_{2n}(R)$, then $[(a,b,s)] = [(a',b',s')]$ in $\pi_0(\Singaone Q_{2n}(R))$, i.e., the class $[(a,b,s)]$ is independent of the relevant choices.
\end{enumerate}
\end{thm}

\begin{proof}
The first statement is immediate from Lemma~\ref{lem:liftexists}.  The second statement follows from Lemma~\ref{lem:naivehomotopyclassdependsonideal} by analyzing the proof of Lemma~\ref{lem:liftexists}.
\end{proof}

\begin{defn}
\label{defn:segreclass}
Given $(I,\omega_I)\in Ob_n(R)$, set $s(I,\omega_I) = [(a,b,s)] \in \pi_0(\Singaone Q_{2n}(R))$ where $a_1,\ldots,a_n,s \in I$, $b_1,\ldots,b_n \in R$, $I = \langle a,s \rangle$ and $(a,b,s) \in Q_{2n}(R)$ (guaranteed to exist by \textup{Theorem~\ref{thm:segre}}.  The class $s(I,\omega_I)$ will be called the \emph{universal Segre class} of $(I,\omega_I)$.
\end{defn}

\begin{rem}
The terminology Segre class is inspired by the work of Murthy \cite[\S 5]{Murthy94}.
\end{rem}

\subsubsection*{Euler class groups}
Let $R$ be a commutative Noetherian ring with $\mathrm{dim}(R)=d$.  Euler class groups of $R$ were defined for height $d$ ideals in \cite[p. 197]{Bhatwadekar00} and more generally in \cite[p. 146]{Bhatwadekar02}.  We slightly recast the definition here and to do so, we recall some notation.  Given a pair $(I,\omega_I) \in Ob_n(R)$, if $I$ is generated by $a_1,\ldots,a_n$, we say that {\em $\omega_I$ is induced by $\overline{a}_1,\ldots,\overline{a}_n$} if $\omega_I$ is the morphism sending the standard basis vector $e_i$ of the free $R/I$-module $R/I^n$ to the element $\overline{a}_i$.  For any commutative ring $A$, let $E_n(A) \subset GL_n(A)$ be the subgroup consisting of elementary (shearing) matrices.  Note that there is a left action of $E_n(R/I)$ on $Ob_n(R)$ or $Ob_n'(R)$ given as follows: for an element $\sigma \in E_n(R/I)$, $\sigma \cdot (I,\omega_I) = (I,\omega_I \circ \sigma^{-1})$.

\begin{defn}
\label{defn:eulerclassgroup}
The Euler class group $\mathrm{E}^n(R)$ is the quotient of the free abelian group $\Z[Ob^{\prime}_n(R)]$ by the ideal generated by the following relations:
\begin{enumerate}[noitemsep,topsep=1pt]
\item (Complete intersection) If $I=\langle a_1,\ldots,a_n\rangle$ and $\omega_I: (R/I)^n \to I/I^2$ is induced by $\overline a_1,\ldots,\overline a_n$ then $(I,\omega_I)=0$.
\item (Elementary action) If $\sigma \in E_n(R/I)$, then $\sigma \cdot (I,\omega_I)= (I,\omega_I)$.
\item (Disconnected sum) If $I=JK$, where $J,K$ are height $n$ ideals with $K+J=R$, then a surjection $\omega_I:(R/I)^n\to I/I^2$ induces surjections $\omega_K:(R/K)^n\to K/K^2$ and $\omega_J:(R/J)^n\to J/J^2$ and the relation is $(I,\omega_I)=(K,\omega_K)+(J,\omega_J)$.
\end{enumerate}
\end{defn}

\begin{rem}
\label{rem:eulerclassgroups}
Definition~\ref{defn:eulerclassgroup} is equivalent to that given in \cite[p. 147]{Bhatwadekar02} even though it looks (slightly) formally different.  More precisely, Bhatwadekar and Sridharan consider the free abelian group on equivalence class of pairs $(I,\omega_I)$ with $\Spec R/I$ connected; conditions (2) and (3) are imposed precisely to compare with that situation.

Note also that Definition~\ref{defn:eulerclassgroup} always makes sense, but it is mainly of interest when $d \leq 2n-3$ because in that context it is closely related with with the problem of when the ideal $I$ can be generated by $n$ elements (\cite[Theorem 4.2]{Bhatwadekar02} or \cite[Theorem 2.4]{Mandal12}).
\end{rem}

\subsubsection*{A ``moving" lemma and some consequences}
We now establish some preliminary ``moving" results, which will be useful in getting ``good" representatives of elements of Euler class groups.  The following lemma is a special case of \cite[Lemma 2.2]{Mandal12} (taking $J = I^2$ in the notation there); note that the proof is quite similar to that of Lemma~\ref{lem:moving}.

\begin{lem}
\label{lem:movingeuler}
Let $R$ be a Noetherian ring of dimension $d$, $I \subset R$ an ideal of height $n$ and $\omega_I:R^n\to I/I^2$ a surjective homomorphism.  For $I_1,\ldots,I_r$ arbitrary ideals of $R$, there exists an ideal $K\subset R$ and a homomorphism $f:R^n\to I\cap K$ having the following properties:
\begin{enumerate}[noitemsep,topsep=1pt]
\item the ideals $I^2$ and $K$ are comaximal;
\item the composite $R^n\stackrel{f}\to I\cap K\subset I\to I/I^2$ is equal to $\omega_I$;
\item the ideal $K$ has height $\geq n$;
\item for any integer $1\leq i\leq r$, the inequality $\mathrm{ht}((I_i+K)/I_i)\geq n$ holds.
\end{enumerate}
\end{lem}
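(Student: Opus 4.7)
The plan is to follow the template of Lemma~\ref{lem:moving}: lift $\omega_I$ to a map $R^n \to I$, use a Nakayama-type argument to produce a residual element, and invoke Plumstead's form of the Eisenbud--Evans basic element theorem to place the resulting generators in sufficiently general position. The one new wrinkle compared to Lemma~\ref{lem:moving} is that we may only modify a lift of $\omega_I$ by adding elements of $I^2$ (in order to preserve property~(2)), so the ``test'' submodule in the Eisenbud--Evans step will be $I^2 R^n$ rather than the whole $R^n$.

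I would first record the Nakayama input: for any finitely generated ideal $J \subset I$ with $J + I^2 = I$, the quotient $I/J$ satisfies $I^2 \cdot (I/J) = I/J$, since every class in $I/J$ is already represented by an element of $I^2$, which in turn can be written $\sum u_i v_i$ with $u_i \in I$ and each $v_i$ chosen in $I^2$. Applying Nakayama to the finitely generated module $I/J$ then produces $s \in I^2$ with $(1-s) I \subset J$.

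Next, I would choose a lift $\tilde{\omega}: R^n \to I$ of $\omega_I$, set $a_i := \tilde{\omega}(e_i)$, and fix generators $t_1, \ldots, t_m$ of $I^2$. The key ingredient is then Plumstead's basic element theorem applied to
\[
\mathcal{A} := \{\mathfrak{p} \in \Spec R : \mathrm{ht}(\mathfrak{p}) \leq n-1\}, \qquad \mathcal{B}_j := \{\mathfrak{p} \supset I_j : I \not\subset \mathfrak{p},\ \mathrm{ht}(\mathfrak{p}/I_j) \leq n-1\},
\]
each equipped with the evident generalized dimension function ($\mathrm{ht}(\mathfrak{p})$ on $\mathcal{A}$, $\mathrm{ht}(\mathfrak{p}/I_j)$ on $\mathcal{B}_j$). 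Exactly as in the proof of Lemma~\ref{lem:moving} these assemble into a single generalized dimension function $\delta < n$ on $\mathcal{A} \cup \bigcup_j \mathcal{B}_j$. Since $\mathrm{ht}(I) = n$, no prime of $\mathcal{A}$ contains $I$, and by construction no prime of $\bigcup_j \mathcal{B}_j$ does either; hence at every prime of the union some $t_\ell$ is a unit, so the submodule $R \cdot (a_1, \ldots, a_n) + I^2 R^n \subset R^n$ contains a basic element (namely $t_\ell e_i$ for any $i$). Plumstead's theorem will then furnish $\mu_{i\ell} \in R$ such that the modified generators $b_i := a_i + \sum_\ell \mu_{i\ell} t_\ell$ give an ideal $J := \langle b_1, \ldots, b_n \rangle$ not contained in any prime of $\mathcal{A} \cup \bigcup_j \mathcal{B}_j$. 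The congruence $b_i \equiv a_i \pmod{I^2}$ keeps $J + I^2 = I$ intact, so the Nakayama input applied to this $J$ produces $s \in I^2$ with $(1-s) I \subset J$.

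Finally, setting $K := J + (1-s) R$ and $f: R^n \to I \cap K$, $e_i \mapsto b_i$, properties~(1) and (2) are immediate: $1 = s + (1-s) \in I^2 + K$, and the composite to $I/I^2$ sends $e_i$ to $\overline{a_i} = \omega_I(e_i)$. For (3) and (4) the observation is that any prime $\mathfrak{p} \supset K$ cannot contain $I$ (else $s \in \mathfrak{p}$ and $1-s \in \mathfrak{p}$ force $1 \in \mathfrak{p}$), so together with $J \subset \mathfrak{p}$ it must lie outside $\mathcal{A}$, giving $\mathrm{ht}(\mathfrak{p}) \geq n$ and hence (3); the same argument with $\mathcal{B}_j$ in place of $\mathcal{A}$ handles (4). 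The main obstacle I anticipate is the bookkeeping in the Plumstead step: packaging the height and relative-height data on $\mathcal{A} \cup \bigcup_j \mathcal{B}_j$ into a single generalized dimension function, with special care at primes lying in several $\mathcal{B}_j$ simultaneously, and checking Plumstead's basic-element hypothesis uniformly on the enlarged row. This is essentially the same bookkeeping carried out in Lemma~\ref{lem:moving}, now in the presence of $r$ auxiliary ideals rather than one.
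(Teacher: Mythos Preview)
The paper does not actually prove this lemma; it simply records it with the parenthetical ``(which is a simplified version of \cite[Lemma 2.2]{Mandal12})'' and passes directly to Corollary~\ref{cor:alphaisI}. Your argument---lift $\omega_I$ to $a_1,\dots,a_n\in I$, perturb the lifts inside $I^2$ via the Plumstead/Eisenbud--Evans basic-element machinery so that $J=\langle b_1,\dots,b_n\rangle$ avoids every prime in $\mathcal{A}\cup\bigcup_j\mathcal{B}_j$, extract $s\in I^2$ with $(1-s)I\subset J$ by Nakayama, and set $K=J+(1-s)R$---is exactly the standard template for results of this shape and is the route taken in the cited reference. The proof is correct, and the bookkeeping you flag (patching the generalized dimension functions and verifying the basic-element hypothesis on the enlarged row $(a_1,\dots,a_n,t_1,\dots,t_m)$) is routine; it is the same manipulation carried out in the proof of Lemma~\ref{lem:moving}, only with the single element $(1-s)^2$ replaced by a generating set for $I^2$.

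One small remark: your justification of $I^2\cdot(I/J)=I/J$ is phrased a bit loosely (you cannot literally choose $v_i\in I^2$ in a decomposition of a \emph{given} element of $I^2$; the replacement is only valid modulo $J$). The claim itself is fine: from $I=J+I^2$ one gets $I^2=(J+I^2)^2\subset J+I^4\subset J+I^3$, hence $I=J+I^3$ and $I^2\cdot(I/J)=(I^3+J)/J=I/J$. Alternatively and more directly, $I\cdot(I/J)=(I^2+J)/J=I/J$ already yields $s\in I$ with $(1-s)I\subset J$ by Nakayama, and writing $s=j+s'$ with $j\in J$ and $s'\in I^2$ gives $(1-s')I\subset(1-s)I+jI\subset J$ as well.
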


The following result can be deduced from Lemma~\ref{lem:movingeuler}.  It also follows by combining \cite[Corollary 2.4 and Proposition 3.1]{Bhatwadekar02}; see \cite[p. 66]{vdKallen15} and note that \cite[Corollary 2.4]{Bhatwadekar02} only requires $d \leq 2n-1$.

\begin{cor}
\label{cor:alphaisI}
Let $R$ be a Noetherian ring of dimension $d\leq 2n-1$ and let $\alpha\in \mathrm{E}^n(R)$. There exist an ideal $I$ of height $n$ and a surjective homomorphism $\omega_I:(R/I)^n\to I/I^2$ such that $\alpha=(I,\omega_I)\in \mathrm{E}^n(R)$.
\end{cor}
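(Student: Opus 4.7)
The plan is to reduce any finite signed sum $\alpha=\sum_{i=1}^m \epsilon_i(I_i,\omega_{I_i})$, $\epsilon_i\in\{\pm1\}$, to a single generator by combining the relations of Definition \ref{defn:eulerclassgroup} with Lemma \ref{lem:movingeuler}. The key auxiliary assertion is a \emph{negation step}: given $(I,\omega_I)\in Ob^\prime_n(R)$ and any finite family of height-$n$ ideals $I_1,\ldots,I_r$, there exists a pair $(J,\omega_J)\in Ob^\prime_n(R)$ with $J+I_j=R$ for every $j$ and $(I,\omega_I)+(J,\omega_J)=0$ in $\mathrm{E}^n(R)$.

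To produce this negation I would apply Lemma \ref{lem:movingeuler} to $(I,\omega_I)$ with the list $I_1,\ldots,I_r$, obtaining an ideal $K$ and a homomorphism $f:R^n\to I\cap K$. Condition (1) of the lemma forces $I+K=R$ (since $I^2+K=R$), hence $I\cap K=IK$, so the elements $g_i:=f(e_i)$ lie in $IK$; a local Nakayama argument combining the surjectivity of $\omega_I$ with that of the induced $\omega_K$ identifies $\langle g_1,\ldots,g_n\rangle$ with $IK$. Relation (1) of Definition \ref{defn:eulerclassgroup} gives $(IK,\omega_{IK})=0$, where $\omega_{IK}$ is the surjection induced by $\bar g_1,\ldots,\bar g_n$. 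Relation (3) of that definition, combined with condition (2) of the moving lemma (which identifies the induced surjection onto $I/I^2$ with $\omega_I$), then yields $(I,\omega_I)+(K,\omega_K)=0$. Setting $J:=K$, comaximality with each height-$n$ ideal $I_j$ is forced by the dimension bound $d\leq 2n-1$: condition (4) gives $\mathrm{ht}((I_j+K)/I_j)\geq n$, while $\dim R/I_j\leq d-n\leq n-1$, so $I_j+K=R$.

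With the negation step in hand, the corollary follows by induction on $m$. Processing the summands of $\alpha$ one at a time, the negation step (applied with the growing list of previously-chosen ideals) replaces every $-(I_i,\omega_{I_i})$ by an equivalent positive term whose ideal is comaximal with every previously produced ideal, yielding $\alpha=\sum_{j=1}^{m^\prime}(I^\prime_j,\omega^\prime_j)$ with the $I^\prime_j$ pairwise comaximal. Iterated application of relation (3) of Definition \ref{defn:eulerclassgroup} then collapses this sum to the single pair $(\prod_j I^\prime_j,\omega)$, and the product of pairwise comaximal height-$n$ ideals has height $n$.

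The main technical obstacle is ensuring that the ideal $K$ produced by Lemma \ref{lem:movingeuler} has height exactly $n$, rather than merely $\geq n$ as the lemma states, so that the pair $(K,\omega_K)$ actually lies in $Ob^\prime_n(R)$ as required. A closer inspection of the Plumstead/Eisenbud--Evans basic-element construction underlying the proof of Lemma \ref{lem:movingeuler} should pin this down; alternatively, one may enlarge the list of auxiliary ideals fed into the moving lemma to force $K$ to avoid the primes where its height would otherwise jump above $n$.
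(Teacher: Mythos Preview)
Your approach is the paper's own: both invoke Lemma~\ref{lem:movingeuler} to manufacture an additive inverse $(K,\omega_K)$ comaximal with a prescribed list, iterate to reach a pairwise-comaximal expression, and then collapse via relation~(3) of Definition~\ref{defn:eulerclassgroup}. The paper first reduces to $\alpha=(I,\omega_I)-(J,\omega_J)$ and then applies one final negation, whereas you systematize the sign-flipping from the outset; the difference is purely organizational.

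Your flagged obstacle dissolves without any appeal to the internals of the moving lemma. Granting that $f:R^n\to I\cap K$ is \emph{surjective} (the paper also uses this in its proof, though the word is missing from the statement of Lemma~\ref{lem:movingeuler}; it is present in Mandal's original), take any minimal prime $\mathfrak p$ of $K$. Comaximality gives $I\not\subset\mathfrak p$, so $K_{\mathfrak p}=(I\cap K)_{\mathfrak p}$ is generated by the $n$ elements $g_i$, and Krull's Hauptidealsatz forces $\mathrm{ht}(\mathfrak p)\le n$. Combined with $\mathrm{ht}(K)\ge n$ this yields $\mathrm{ht}(K)=n$ unless $K=R$; in the latter case $I=I\cap K=\langle g_1,\dots,g_n\rangle$ with $\overline{g_i}=\omega_I(e_i)$, so $(I,\omega_I)=0$ by relation~(1) and the term simply drops. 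The paper glosses over this point as well.

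One small caution: your ``local Nakayama'' justification for $\langle g_1,\dots,g_n\rangle=IK$ is circular as written, since you invoke surjectivity of ``the induced $\omega_K$'' which is not part of the lemma's stated output. The correct fix is not to derive surjectivity of $f$ but to read it as part of the lemma's conclusion (as the paper itself does two paragraphs later).
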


\subsubsection*{The Segre class homomorphism}
Assume $k$ is a field having characteristic not equal to $2$, and suppose $X=\Spec R$ is a smooth affine $k$-scheme of dimension $d\leq 2n-2$.  By definition of the Segre class (Definition~\ref{defn:segreclass}), a pair $(I,\omega_I)\in Ob'_n(R)$ determines an element $s(I,\omega_I)$ of $\pi_0(\Singaone Q_{2n})(R)$ so by Theorem~\ref{thm:naivevsrefined} yields an element of $[X,Q_{2n}]_{\aone}$.  If we equip the target with the group structure of Proposition~\ref{prop:groupstructureI}, then there is a unique extension to a group homomorphism:
\[
\tilde{s}: \Z[Ob'_n(R)] \longrightarrow [X,Q_{2n}]_{\aone}.
\]
We now show that $\tilde{s}$ factors through the Euler class group of Definition~\ref{defn:eulerclassgroup}.

\begin{prop}
\label{prop:segreclasshomomorphism}
Suppose $n,d$ are integers, $n \geq 2$ and $d \leq 2n-2$.  Assume $k$ is a field having characteristic not equal to $2$ and suppose $X = \Spec R$ is a smooth affine $k$-scheme having $\aone$-cohomological dimension $d \leq 2n-2$.  The homomorphism $\tilde{s}$ factors through a homomorphism:
\[
s:\mathrm{E}^n(R) \longrightarrow [X,Q_{2n}]_{\aone},
\]
that we will refer to as the Segre class homomorphism.
\end{prop}

\begin{proof}
We prove that the relations in Definition~\ref{defn:eulerclassgroup} are satisfied in $[X,Q_{2n}]_{\aone}$.
\smallskip

\noindent {\bf Step 1}.  Relation $1$ holds, i.e., if $I=\langle a_1,\ldots,a_n\rangle$ is an ideal of height $n$ and $\omega_I:(R/I)^n\to I/I^2$ is given by $e_i\mapsto \overline a_i$, then $s(I,\omega_I)=0$.  Indeed, the Segre class of $(I,\omega_I)$ is given, for instance, by $v:=(a_1,\ldots,a_n,0,\ldots,0)\in Q_{2n}(R)$. Now,  $(a_1T,\ldots,a_nT,0,\ldots,0)\in Q_{2n}(R[T])$ provides an explicit homotopy between $v$ and $v_0=(0,\ldots,0)$.
\smallskip

\noindent{\bf Step 2.} Relation $2$ holds, i.e., if $(I,\omega_I)$ is a generator of the Euler class group and $\sigma\in E_{n}(R/I)$, then $s(I,\omega_I)=s(\sigma \cdot (I,\omega_I))$.  By definition, any element $\sigma \in E_n(R/I)$ can be factored as a product of elementary matrices.  Therefore, it suffices to establish the result for $\overline{\sigma} = 1 + e_{ij}(\overline{\lambda})$, $i \neq j$ with $\lambda \in R$ an arbitrary element.

Choose any representative $(a,b,s)$ of $(I,\omega_I)$; such a representative exists by Lemma~\ref{lem:liftexists}.  In that case, set $\sigma = 1 + e_{ij}\lambda$ and observe that we can choose the representative $(a\cdot \sigma,b\cdot(\sigma^{-1})^t ,s)$ for $\sigma \cdot (I,\omega_I)$.  Thus, $\sigma(T) := 1 + e_{ij}(\lambda T)$ determines an explicit homotopy between $(a,b,s)$ and $(a\cdot \sigma,b\cdot(\sigma^{-1})^t,s)$.
\smallskip

\noindent{\bf Step 3.} Relation $3$ holds, i.e., suppose $I,J,K$ are ideals of height $n$ such that $J$ and $K$ are comaximal, $I = JK$, $\omega_I: (R/I)^n \to I/I^2$ is a surjection, and $\omega_J$ and $\omega_K$ are the surjections induced by $\omega_I$; we will show that $s(I,\omega_I) = s(J,\omega_J) + s(K,\omega_K)$.  To this end, recall Construction~\ref{construction:geometriccomposition}: if $J$ and $K$ correspond to elements $v,v^\prime \in Q_{2n}(R)$, we observed there how to construct an element of $Q_{2n}(R)$ corresponding with $I$.  With that in mind, relation 3 follows immediately from Theorem~\ref{thm:concretetau}.
\end{proof}

Our goal is to show that the Segre class homomorphism is an isomorphism; the next result shows that it is surjective and establishes the third point of Theorem~\ref{thmintro:cohomotopy}.

\begin{prop}
\label{prop:surjectivityofsegreclasshomom}
Assume $k$ is a field having characteristic not equal to $2$, and suppose $n,d$ are integers, $n \geq 2$ and $d \leq 2n-2$.  If $X = \Spec R$ is a smooth affine $k$-scheme having $\aone$-cohomological dimension $d$, then the Segre class homomorphism $s$ is surjective.
\end{prop}

\begin{proof}
Fix $v=(a,b,s)\in Q_{2n}(R)$.  By Lemma~\ref{lem:moving} we can find $v^\prime=(a^\prime,b^\prime,s^\prime)$ such that (i) $I:=\langle a^\prime,s^\prime\rangle$ has height $\geq n$ and (ii) the class of $v^{\prime}$ in $\pi_0(\Singaone Q_{2n}(R))$ coincides with that of $v$.  Since $a^\prime (b^\prime)^t=s^\prime(1-s^\prime)$, we conclude that the localization $I_{1-s^\prime}=\langle a^\prime\rangle$.  If $I_{1-s^{\prime}}$ is a proper ideal of $R_{1-s^{\prime}}$, it follows from Krull's Hauptidealsatz that $\mathrm{ht}(I_{1-s^\prime})\leq n$ and therefore that $\mathrm{ht}(I)\leq n$ as well.  Thus, in that case, we conclude that $\mathrm{ht}(I) = n$.  Defining $\omega_I:(R/I)^n\to I/I^2$ by $e_i\mapsto \overline {a_i}^\prime$, it follows that $s(I,\omega_I)=(a^\prime,b^\prime,s^\prime)$. On the other hand, if $I_{1-s^\prime}=R_{1-s^\prime}$, then $I=R$.  Thus, $(a',b',s')$ corresponds to the constant map to the base-point in $\pi_0(\Singaone Q_{2n}(R))$ (i.e., the identity for the group structure). Since the images of $(a,b,s)$ and $(a',b',s')$ in $\pi_0(\Singaone Q_{2n}(R))$ coincide, the result follows in this case as well.
\end{proof}

\subsubsection*{An inverse to the Segre class homomorphism}
To prove injectivity of the Segre class homomorphism, we will construct an explicit inverse.  As above, let $n \geq 2$ be an integer, and suppose $X=\Spec R$ be a smooth affine $k$-scheme having $\aone$-cohomological dimension $d \leq 2n-2$.  Given a sequence $v = (a,b,s) \in Q_{2n}(R)$, Lemma~\ref{lem:moving} guarantees that we may find $\mu \in R^n$ such that, setting
\[
w = (a + \mu(1-s)^2,b(1-\mu b^t),s+\mu b^t (1-s)),
\]
the following statements are true: $w \in Q_{2n}(R)$, $w$ and $v$ lie in the same naive $\aone$-homotopy class and the ideal $N_{\mu} := \langle a + \mu(1-s)^2,s+\mu b^t (1-s) \rangle$ has height $\geq n$.  Mapping the standard basis of $R^n$ to the classes of $a_i + \mu_i(1-s)^2$ in the quotient $N_{\mu}/N_{\mu}^2$ defines a surjective homomorphism $\omega_{N_{\mu}}: R^n \to N_{\mu}/N_{\mu}^2$.  Granted these facts, define a map
\[
\tilde{\psi}_n: Q_{2n}(R) \longrightarrow \mathrm{E}^n(R)
\]
by means of the formula
\[
\tilde{\psi}_n(v) = (N_{\mu},\omega_{N_{\mu}}).
\]
We now check that this formula is well-defined, i.e., independent of the choice of $\mu$.

\begin{lem}
\label{lem:psin}
Assume $k$ is an infinite field and $R$ is a smooth affine $k$-algebra of dimension $d \geq 2$.  If $n \geq 2$ is an integer, then the map $\tilde{\psi}_n:Q_{2n}(R)\to \mathrm{E}^n(R)$ described above is well-defined.
\end{lem}

\begin{proof}
Given $v = (a,b,s)$, assume we may find $\mu_0$ and $\mu_1$ as guaranteed by Lemma~\ref{lem:moving} such that, setting
\[
\begin{split}
w_0 &:=(a+\mu_0 (1-s)^2,b(1-\mu_0 b^t),s+\mu_0 b^t(1-s)), \text{ and } \\
w_1 &:=(a+\mu_1 (1-s)^2,b(1-\mu_1 b^t),s+\mu_1 b^t(1-s)),
\end{split}
\]
both $w_0$ and $w_1$ satisfy the conditions listed above.

We now construct a homotopy between the elements $w_0$ and $w_1$.  To this end, set $\beta = \mu_0(1-T) + \mu_1 T$.  One checks immediately that
\[
W := (a + \beta(1-s)^2,b(1 - \beta b^t), s + \beta b^t(1-s))
\]
defines an element of $Q_{2n}(R[T])$ with $W(0) = w_0$ and $W(1) = w_1$.  A priori, the ideal $N_{\beta} := \langle a + \beta(1-s)^2,s + \beta b^t(1-s) \rangle$ may not give rise to an element of $\mathrm{E}^n(R[T])$, so we now show that it may be modified to do so.

Begin by observing that the sequence $(a+\beta(1-s)^2,T(1-T)(1-s)^2)$ defines a unimodular row over $R[T]^{n+1}_{T(1-T)(1-s)}$.  Arguing as in Lemma~\ref{lem:moving}, we conclude that there exists $\alpha \in R[T]^n$ such that the ideal
\[
J :=\langle a+\beta (1-s)^2,\alpha T(1-T)(1-s)^2\rangle
\]
has height $\geq n$ in $R[T]_{T(1-T)(1-s)}$.  Setting $\gamma=\beta +\alpha T(1-T)$, we see that
\[
W^\prime =(a+\gamma (1-s)^2,b(1-\gamma b^t),s+\gamma b^t(1-s))\in Q_{2n}(R[T]).
\]
still satisfies $W^\prime(0)=w_0$ and $W^\prime(1)=w_1$.  We claim, furthermore, that the ideal
\[
I :=\langle a+\gamma (1-s)^2,s+\gamma b^t(1-s)\rangle
\]
has height $\geq n$ in $R[T]$.  Indeed, suppose that $I \subset \mathfrak p$ for some prime ideal ${\mathfrak p} \subset R[T]$. If $T(1-T)(1-s)\not\in\mathfrak p$, then by definition $\mathrm{ht}(\mathfrak p)\geq n$. If $T\in \mathfrak p$, we find $\langle I ,T\rangle\subset \mathfrak p$. Now $\langle I ,T\rangle=\langle N_{\mu},T\rangle$ and it follows that $\mathrm{ht}(\mathfrak p)\geq n+1$.  The same argument applies if $(1-T)\in \mathfrak p$ and we similarly conclude that $\mathrm{ht}(\mathfrak{p}) \geq n+1$.  Finally, $\langle I,1-s\rangle=R[T]$ and the claim is proved.

Assume henceforth that $k$ is an infinite field and $R$ is a smooth affine $k$-algebra of dimension $d \geq 2$.  The pair $(I,\omega_{I})$ gives rise to an element of $\mathrm{E}^n(R[T])$.  The image of this element under the maps $\mathrm{E}^n(R[T]) \to \mathrm{E}^n(R)$ corresponding to restriction to $t = 0$ and $t = 1$ correspond to the elements of the Euler class group defined by $w_0$ and $w_1$.  However, under the hypothesis on $d$, the restriction maps are isomorphisms by appeal to Theorem~\ref{homotopy}.
\end{proof}


Now that we have established that $\tilde\psi_n$ defines a function $Q_{2n}(R) \to \mathrm{E}^n(R)$, we want to check that it factors through the naive $\aone$-homotopy relation.

\begin{prop}
\label{prop:inverse}
Assume $k$ is an infinite field and $R$ is a smooth affine $k$-algebra of dimension $d \geq 2$.  If $n \geq 2$ is an integer, then the function $\tilde{\psi}_n:Q_{2n}(R)\to \mathrm{E}^n(R)$ factors through the naive $\aone$-homotopy relation to define a function $\psi_n: [X,Q_{2n}]_{\aone}\to \mathrm{E}^n(R)$.
\end{prop}

\begin{proof}
Let $v =(A,B,S)\in Q_{2n}(R[T])$. By Lemma~\ref{lem:moving}, there exists $\mu \in R[T]^n$ such that
\[
w =(A+\mu(T)(1-s(T))^2,B(1-\mu(T)B^t),S+\mu B^t(1-S))\in Q_{2n}(R[T])
\]
satisfies, $\mathrm{ht}(N)\geq n$ where $N:=\langle A+\mu(1-S)^2,S+\mu B^t(1-S)\rangle$. Further, we may assume (arguing as in the proof of Lemma~\ref{lem:movingeuler}) that $\mathrm{ht}(N(0))\geq n$ and $\mathrm{ht}(N(1))\geq n$. It follows from the previous lemma that $\tilde{\psi}_n(v(0))=(N(0),\omega_{N(0)})$ and $\tilde\psi_n(v(1))=(N(1),\omega_{N(1)})$.  As in the proof of Lemma~\ref{lem:psin}, we conclude by appeal to Theorem~\ref{homotopy}.
\end{proof}

We now put everything together to show that the Segre class homomorphism is an isomorphism; this establishes the fourth point of Theorem~\ref{thmintro:cohomotopy}.

\begin{thm}
\label{thm:comparison}
If $k$ is an infinite field having characteristic not equal to $2$, $n,d$ are integers $\geq 2$, and $R$ is a smooth affine $k$-algebra of dimension $d \leq 2n-2$, then the Segre class homomorphism
\[
s:\mathrm{E}^n(R) \longrightarrow [X,Q_{2n}]_{\aone}
\]
is an isomorphism.
\end{thm}

\begin{proof}
Surjectivity of the Segre class homomorphism was already established in Proposition~\ref{prop:surjectivityofsegreclasshomom}, so it remains to demonstrate injectivity.  To this end, we use the map $\psi_n$ of Proposition~\ref{prop:inverse}. Indeed, it follows from Lemma~\ref{lem:psin} that $(\psi_n\circ s)(I,\omega_I)=(I,\omega_I)$ for any generator $(I,\omega_I)$ of $E^n(R)$ and injectivity thus follows from Corollary~\ref{cor:alphaisI}.
\end{proof}

\begin{rem}
\label{rem:assumptions}
Tracing through the arguments, in Theorem~\ref{thm:comparison}, the assumption that $k$ has characteristic not equal to $2$ appears only by way of appeal to Theorem~\ref{thm:naivevsrefined}.  In particular, work currently in preparation will remove this hypothesis.  On the other hand, the assumption that $k$ is infinite appears by way of appeal to Theorem~\ref{homotopy}, i.e., homotopy invariance for Euler class groups.  It is unclear to the authors at the time of this writing whether this hypothesis can be removed.
\end{rem}

\begin{rem}
\label{rem:improvedfunctoriality}
Theorem~\ref{thm:comparison} immediately implies that Euler class groups satisfy a number of functorial properties.  For example, if $f: X \to Y$ is any arbitrary morphism of smooth affine $k$-varieties as in the statement, then the map $f^*: [Y,Q_{2n}]_{\aone} \to [X,Q_{2n}]$ induced by composition yields a pull-back morphism for Euler class groups.  By appeal to Theorem~\ref{thm:excision} one obtains Mayer-Vietoris sequences.  More precisely, if $X$ is a smooth affine scheme of dimension $d \leq 2n-2$ and we have $j: U \to X$ an open immersion of an affine $k$-scheme $U$ and $\varphi: V \to X$ an \'etale morphism from an affine $k$-scheme $V$ such that the induced map $(V \setminus U \times_X V)_{red} \to (X \setminus U)_{red}$ is an isomorphism, then there is an exact sequence of the form
\[
[X,Q_{2n}]_{\aone} \longrightarrow [U,Q_{2n}]_{\aone} \times [V,Q_{2n}]_{\aone} \longrightarrow [U \times_X V,Q_{2n}]_{\aone}.
\]
Furthermore, if $d \leq 2n-4$, there is a map from $[U \times_X V,Q_{2n-1}]$ to $[X,Q_{2n}]$ which extends the exact sequence further to the left.  Finally, Remark~\ref{rem:externalproducts} equips Euler class groups with a product operation as well. It would be interesting to compare the excision and product operations studied in \cite{Mandal12} and \cite{Mandal10} with this product operation.
\end{rem}

\begin{rem}
As a further consequence of Theorem~\ref{thm:comparison}, given a ring $R$ satisfying the hypotheses, it is possible to attach to a pair $(I,\omega_I)$ consisting of an ideal $I \subset R$ and a surjection $\omega_I: (R/I)^n \to I/I^2$ an element in the Euler class group that detects if $\omega_I$ lifts to a surjection $R^n \to I$.  In particular, one can extend (and partially generalize) the work of M. Das and R. Sridharan \cite{Das10}.
\end{rem}

\subsection{Euler class, Chow--Witt and Chow groups}
\label{ss:applications}
Finally, we put everything together: if $X$ is a smooth $k$-scheme of dimension $d$, we compare Euler class groups and Chow--Witt groups in top codimension.  The explicit form of the Hurewicz homomorphism from Theorem~\ref{thm:abelian} plays a role in the comparison with previous constructions.  In Theorem~\ref{thm:EulertoChow} we show that Euler class groups can be identified with Chow--Witt groups in certain cases.  Finally, Theorem~\ref{thm:weakEulerisChow} establishes the connection between weak Euler class groups and Chow groups by appeal to an exact sequence studied in \cite{Das15}.

\subsubsection*{Euler class groups vs. Chow--Witt groups}
In view of Theorems~\ref{thm:abelian} and \ref{thm:comparison}, there is a homomorphism $\mathrm{E}^n(R)\to \CH^n(X)$ which we make more explicit when $n=d:=\mathrm{dim}(X)$. In that case, the Euler class group is generated by pairs $(\mathfrak m,\omega_{\mathfrak m})$ where $\mathfrak m\subset R$ is a maximal ideal and $\omega_{\mathfrak m}:(R/\mathfrak m)^d\to \mathfrak m/\mathfrak m^2$ is a surjection (and indeed an isomorphism).  Note that this definition of the Euler class differs from that in Definition~\ref{defn:eulerclassgroup}, but if we restrict attention to smooth affine algebras over an infinite perfect field, it follows from \cite[Remark 4.6]{Bhatwadekar98} that the definition is equivalent.

Let $m_1,\ldots,m_n$ be elements of $\mathfrak m$ such that $\omega_{\mathfrak m}(e_i)=\overline m_i$.  In that case, the description of the Hurewicz homomorphism from Theorem~\ref{thm:abelian} shows that the Hurewicz image of $(s(\mathfrak m,\omega_{\mathfrak m}))$ is given (up to the unit $\lambda\in \K_0^{MW}(k)$) by the class of the cycle $\langle 1\rangle\otimes \overline m_1\wedge\ldots\wedge\overline m_d$. In other words, the composite homomorphism
\begin{equation}
\label{eqn:eulertochowwitt}
\xymatrix{\mathrm{E}^d(R)\ar[r]^-s & [X,Q_{2d}]\ar[r] & \CH^d(X)}
\end{equation}
coincides (up to the unit $\lambda\in \K_0^{MW}(k)$) with the homomorphism defined in \cite[Proposition 17.2.8]{Fasel08a}.  Using these observations, we establish the following result, which proves another part of Theorem \ref{thmintro:cohomotopy} from the introduction. 

\begin{thm}
\label{thm:EulertoChow}
If $k$ is an infinite field that has characteristic not equal to $2$, $d \geq 2$ is an integer and $X = \Spec R$ is a smooth affine $k$-scheme of dimension $d$, then the composite homomorphism
\[
\mathrm{E}^d(R)\to \CH^d(X)
\]
in \eqref{eqn:eulertochowwitt} is an isomorphism.
\end{thm}

\begin{proof}
It suffices to show that the composite
\[
\xymatrix{\mathrm{E}^d(R)\ar[r]^-s & [X,Q_{2d}]_{\aone}\ar[r] & \CH^d(X)}
\]
is an isomorphism.  In view of Theorem~\ref{thm:abelian}, this follows from Theorem~\ref{thm:comparison}.
\end{proof}

\subsubsection*{Weak Euler class groups and Chow groups: recollections}
Let $R$ be a smooth affine algebra of dimension $d$ over a field $k$.  Recall the notion of weak Euler class groups $\mathrm{E}^d_0(R)$ (cf. \cite[Definition 2.2]{Bhatwadekar99} or \cite[Definition 5.1]{Murthy97}; these groups were mentioned in the introduction).

\begin{defn}
\label{defn:weakeulerclassgroup}
If $X = \Spec R$ is a smooth affine $k$-scheme of dimension $d$, the weak Euler class group $\mathrm{E}^d_0(X)$ is the quotient of the free abelian group on the set of maximal ideals $\mathfrak m\subset R$ subject to the relation $\sum_i \mathfrak m_i=0$ if $I=\cap_i \mathfrak m_i$ is a reduced complete intersection ideal.
\end{defn}

The assignment $({\mathfrak m},\omega_{\mathfrak m}) \mapsto {\mathfrak m}$ passes to a surjective group homomorphism $\mathrm{E}^d(X) \to \mathrm{E}^d_0(X)$.  Associating with a maximal ideal $\mathfrak m$ its class in the Chow group $CH^d(X)$ yields a well-defined surjective homomorphism $s^\prime:\mathrm{E}^d_0(R)\to CH^d(X)$ \cite[Lemma 2.5]{Bhatwadekar99}.  The relationship between these homomorphisms together with the canonical homomorphism $\CH^d(X) \to CH^d(X)$ is established in \cite[Proposition 17.2.10]{Fasel08a} where it is shown that there is a commutative diagram of the form:
\[
\xymatrix{\mathrm{E}^d(X)\ar[r]\ar[d]_-s & \mathrm{E}^d_0(X)\ar[d]^-{s^\prime} \\
\CH^d(X)\ar[r] & CH^d(X)};
\]
and the horizontal maps are surjective (the surjectivity of the bottom horizontal map follows because $X$ has dimension $d$).  We now proceed to analyze the kernels of the two horizontal maps appearing here.

\subsubsection*{On the surjection $\mathrm{E}^d(X) \to \mathrm{E}^d_0(X)$}
The kernel of the surjection $\mathrm{E}^d(X) \to \mathrm{E}^d_0(X)$ can also be described in terms of unimodular rows.  Recall that $Um_{d+1}(R)$ is the set of unimodular rows of length $d+1$ in $R$, i.e., rows $(a_1,\ldots,a_{d+1})$ that admit a right inverse.

In Section~\ref{ss:dzhomomorphism}, following \cite{Das15} one defines a function
\[
\phi: Um_{d+1}(R)/E_{d+1}(R) \longrightarrow \mathrm{E}^d(R)
\]
that is a homomorphism by appeal to Theorem~\ref{prop:dzhomomrevisited}.


\begin{prop}[Das--Zinna]
\label{prop:daszinna}
Assume $k$ is an infinite field, and suppose $X = \Spec R$ is a smooth affine $k$-algebra of dimension $d \geq 2$.  The homomorphism $\phi: Um_{d+1}(X)/E_{d+1}(X) \to \mathrm{E}^d(X)$ described above fits into an exact sequence of the form
\[
Um_{d+1}(X)/E_{d+1}(X) \stackrel{\phi}{\longrightarrow} \mathrm{E}^d(X) \longrightarrow \mathrm{E}^d_0(X) \longrightarrow 0.
\]
\end{prop}

\begin{proof}
Appealing to Proposition~\ref{prop:dzhomomrevisited} instead of the construction and results of \cite[\S 3]{Das15}; the claimed exactness follows by repeated the proof of \cite[Theorem 3.8]{Das15}.
\end{proof}

\subsubsection*{On the surjection $\CH^d(X) \to CH^d(X)$}
The kernel of the surjection $\CH^d(X) \to CH^d(X)$ can be analyzed homotopically.  The following result generalizes \cite[Theorem 4.9]{Fasel08b}.

\begin{prop}
\label{prop:hurewiczisomorphismadminus0}
Assume $k$ is a field, $d \geq 2$ is an integer and $X$ is a smooth affine $k$-scheme of dimension $d$.  There is a ``Hurewicz" isomorphism of the form:
\[
Um_{d+1}(X)/E_{d+1}(X) \isomto H^d(X,\K^{MW}_{d+1}).
\]
Explicitly, this isomorphism sends a unimodular row $(a_1,\ldots,a_{d+1})$ to the cycle $[a_{d+1}] \tensor \overline{a_1} \wedge \cdots \wedge \overline{a_d}$ in $\K^{MW}_1(k(s)) \tensor_{\Z[k(s)]^{\times}} \Z[\Lambda^*_s]$, where $s$ is the complete intersection given by the equations $a_1 = \cdots = a_{d} = 0$.
\end{prop}

\begin{proof}
By appeal to \cite[Corollary 4.2.6]{AsokHoyoisWendtII} and \cite[Theorem 2.1]{Fasel08b}, one knows that $[X,{\mathbb A}^{d+1} \setminus 0]_{\aone} \cong Um_{d+1}(X)/E_{d+1}(X)$.  Since ${\mathbb A}^{d+1} \setminus 0$ is $\aone$-$(d-1)$-connected, and  $\bpi_d^{\aone}({\mathbb A}^{d+1} \setminus 0) \cong \K^{MW}_{d+1}$, by appeal to Proposition~\ref{prop:functorialityinthetarget}, we conclude that for any integer $d \geq 2$ the Hurewicz homomorphism defines an isomorphism $[X,{\mathbb A}^{d+1} \setminus 0]_{\aone} \isomt H^d(X,\K^{MW}_{d+1})$.

The final statement follows from the explicit description of the fundamental class in $H^d({\mathbb A}^{d+1} \setminus 0,\K^{MW}_{d+1})$ from \cite[Lemma 4.5]{ADF}.
\end{proof}

Taking cohomology of the exact sequence of sheaves $0 \to \mathbf{I}^{d+1} \to \K^{MW}_d \to \K^M_d \to 0$ yields an exact sequence of the form
\[
H^d(X,\mathbf{I}^{d+1}) \longrightarrow \CH^d(X) \longrightarrow CH^d(X) \longrightarrow 0.
\]
The epimorphism $\K^{MW}_{d+1} \to \mathbf{I}^{d+1}$ yields an surjective homomorphism $H^d(X,\K^{MW}_{d+1}) \to H^d(X,\mathbf{I}^{d+1})$ and we thus obtain a homomorphism $H^d(X,\K^{MW}_{d+1}) \to \CH^d(X)$; note that this homomorphism may also be described as the map in sheaf cohomology associated with the morphism of sheaves $\K^{MW}_{d+1} \to \K^{MW}_d$.

Define $\phi'$ to be the composite
\[
Um_{d+1}(X)/E_{d+1}(X) \isomto H^d(X,\K^{MW}_{d+1}) \longrightarrow \CH^d(X),
\]
where the first map is the isomorphism from Proposition~\ref{prop:hurewiczisomorphismadminus0} and the second map $H^d(X,\K^{MW}_{d+1}) \to \CH^d(X)$ is the one just defined.  Putting everything together, one has the following result.

\begin{prop}
\label{prop:kernelofchowwitttochow}
Assume $k$ is a field, and $X$ is a smooth affine $k$-scheme of dimension $d \geq 2$.  There is an exact sequence of the form
\[
Um_{d+1}(X)/E_{d+1}(X) \stackrel{\phi'}{\longrightarrow} \CH^d(X) \longrightarrow CH^d(X) \longrightarrow 0.
\]
\end{prop}

\subsubsection*{Comparing weak Euler class groups and Chow groups}
We now establish Theorem~\ref{thmintro:mainchow} from the introduction.

\begin{thm}
\label{thm:weakEulerisChow}
If $k$ is an infinite field that has characteristic not equal to $2$, and $X=\Spec R$ is a smooth affine $k$-scheme of dimension $d\geq 2$, then the homomorphism $s^\prime:\mathrm{E}^d_0(R)\to CH^d(X)$ is an isomorphism.
\end{thm}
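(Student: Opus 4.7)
The plan is to obtain Theorem~\ref{thm:weakEulerisChow} as a formal consequence of the comparison between Euler class groups and Chow--Witt groups established in Theorem~\ref{thm:EulertoChow}, via a diagram chase (really a short five lemma) applied to the commutative diagram with exact rows displayed just before the theorem statement.

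First, I would assemble the two exact sequences independently. On the Euler class side, the sequence
\[
Um_{d+1}(R)/E_{d+1}(R) \xrightarrow{\phi} \mathrm{E}^d(R) \longrightarrow \mathrm{E}^d_0(R) \longrightarrow 0
\]
comes from \cite[Theorem 3.8]{Das15} (or equivalently \cite[Theorem 3.4]{vdKallen15}), with $\phi$ defined on a ``special'' unimodular row $(a_1,\dots,a_{d+1})$, i.e.\ one for which $I := \langle a_1,\dots,a_d \rangle$ has height $d$, by the formula $\phi(a_1,\dots,a_{d+1}) = (I, a_{d+1}\omega_I)$ where $\omega_I$ sends $e_i \mapsto \overline{a_i}$; the extension to arbitrary rows uses the fact that any unimodular row may be moved to special form by elementary operations, and that $\phi$ descends to orbits. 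On the Chow side, the analogous sequence
\[
Um_{d+1}(R)/E_{d+1}(R) \xrightarrow{\phi'} \widetilde{CH}^d(X) \longrightarrow CH^d(X) \longrightarrow 0
\]
is \cite[Proposition 3.1, Theorem 4.9]{Fasel08b}.

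Next, I would verify the identity $s \circ \phi = \phi'$, where $s$ is the isomorphism of Theorem~\ref{thm:EulertoChow}. This is the one point requiring genuine checking: it amounts to comparing the two constructions on special unimodular rows. Using the explicit description of the Hurewicz image recorded in the discussion after Theorem~\ref{thm:comparison} (which identifies the composite $\mathrm{E}^d(R) \xrightarrow{s} [X,Q_{2d}]_{\aone} \to \widetilde{CH}^d(X)$ with the homomorphism of \cite[Proposition 17.2.8]{Fasel08a}, up to the constant unit $\lambda \in \K_0^{MW}(k)^{\times}$), the image of $\phi(a_1,\dots,a_{d+1}) = (I, a_{d+1}\omega_I)$ under $s$ is represented by the cycle $\langle a_{d+1} \rangle \otimes \overline{a}_1 \wedge \cdots \wedge \overline{a}_d$ at the relevant closed points; this is precisely the class giving $\phi'(a_1,\dots,a_{d+1})$. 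Once this identification is in place, the diagram
\[
\xymatrix{Um_{d+1}(R)/E_{d+1}(R)\ar[r]^-\phi \ar@{=}[d]& \mathrm{E}^d(R)\ar[r]\ar[d]_-s & \mathrm{E}^d_0(R)\ar[d]^-{s^\prime}\ar[r] & 0 \\
Um_{d+1}(R)/E_{d+1}(R)\ar[r]_-{\phi^\prime} & \widetilde{CH}^d(X)\ar[r] & CH^d(X)\ar[r] & 0}
\]
commutes with exact rows, its left vertical arrow is the identity, and its middle vertical arrow is an isomorphism by Theorem~\ref{thm:EulertoChow}.

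Finally, I would conclude by the short five lemma: given any commutative diagram of abelian groups with exact rows $A \to B \to C \to 0$ in which the first vertical map is surjective and the second is an isomorphism, the third is automatically an isomorphism (surjectivity is a routine diagram chase on elements of $C$; injectivity uses that any kernel element of $C$ lifts to $B$, is pushed into the image of $A$ below, and therefore lifts back to the image of $A$ above). Applied to our diagram this immediately yields that $s'$ is an isomorphism, proving the theorem. The only non-formal step is the verification $s\phi = \phi'$ outlined in the previous paragraph; everything else is bookkeeping around results already available in the paper.
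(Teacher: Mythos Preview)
Your proposal is correct and follows essentially the same route as the paper: the paper likewise assembles the commutative diagram with exact rows from \cite{Das15,vdKallen15} and \cite{Fasel08b}, notes that $s\phi=\phi'$, invokes Theorem~\ref{thm:EulertoChow} for the middle vertical isomorphism, and concludes by the five lemma. Your only addition is spelling out the verification of $s\phi=\phi'$ and the diagram chase in slightly more detail than the paper does.
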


\begin{proof}
We claim that $s\phi = \phi'$.  Assuming this, by combining Propositions~\ref{prop:daszinna}, \ref{prop:kernelofchowwitttochow} and \cite[Proposition 17.2.10]{Fasel08a} one obtains a commutative diagram with exact rows of the form:
\[
\xymatrix{Um_{d+1}(R)/E_{d+1}(R)\ar[r]^-\phi \ar@{=}[d]& \mathrm{E}^d(R)\ar[r]\ar[d]_-s & \mathrm{E}^d_0(R)\ar[d]^-{s^\prime}\ar[r] & 0 \\
Um_{d+1}(R)/E_{d+1}(R)\ar[r]_-{\phi^\prime} & \CH^d(X)\ar[r] & CH^d(X)\ar[r] & 0.}
\]
The conclusion then follows by appeal to Theorem~\ref{thm:EulertoChow}.

To establish that $s\phi = \phi'$, it suffices to unwind the construction of $\phi'$.  Indeed, $\phi'$ is the composite of the Hurewicz isomorphism, which is made explicit in Proposition~\ref{prop:hurewiczisomorphismadminus0} and the map in sheaf cohomology induced by $\K^{MW}_{d+1} \to \K^{MW}_d$.  The latter map is induced by an explicit morphism of complexes, which we now describe.

Suppose $F$ is a field.   The homomorphism $K^{MW}_1(F) \to K^{MW}_0(F)$ is given at the level of symbols by multiplication by the class $\eta$.  In more detail, if $u$ is a unit in $F$, then
\[
[u] \longmapsto \eta[u] = 1 + \eta[u] - 1 = \langle u \rangle - \langle 1 \rangle.
\]
Now, we use the notation of Proposition~\ref{prop:hurewiczisomorphismadminus0}.  In that notation, the composite of the Hurewicz homomorphism and the map on sheaf cohomology sends the unimodular row $(a_1,\ldots,a_{d+1})$ to $(\langle a_{d+1}\rangle - \langle 1 \rangle) \tensor \bar{a}_1 \wedge \cdots \wedge\bar{a_d}$.  By means of the explicit formula for $\phi$ and the discussion of $s$ before Theorem~\ref{thm:EulertoChow} (specifically the fact that it coincides with the Hurewicz homomorphism for $Q_{2d}$), the result follows.
\end{proof}


\appendix
\section{Some results on Euler class groups}
\begin{center}by Mrinal Kanti Das\end{center}
\label{s:properties}
This appendix attempts to streamline (e.g., clarify necessary hypotheses) the presentation of some results on Euler class groups used in the main body of this paper.  Results established in this appendix are often not presented in their most general form; our purpose is solely to tailor them for use in the main body of the paper.  Moreover, the proofs given herein are not original: most ideas are present either explicitly or implicitly in the literature.  The results presented here incorporate a portion of the document \cite{das2019remarks} and provide further weakenings of the hypotheses for results listed there.

\subsection{Homotopy invariance of Euler class groups} 
\label{ss:homotopyinvarianceofEulerclassgroups}
Let $R$ be a Noetherian commutative ring of dimension $d\geq 2$.  Let $I\subset R[T]$ be an ideal of height $n$ such that $I/I^2$ is generated by $n$ elements and both $I(0)$ and $I(1)$ are ideals of height $n$ in $R$. If $\omega:(R[T]/I)^n \twoheadrightarrow I/I^2$ is a surjection, then there are induced surjections $\omega(0):(R/I(0))^n \twoheadrightarrow I(0)/I(0)^2$ and $\omega(1):(R/I(1))^n \twoheadrightarrow I(1)/I(1)^2$. One says that {\em homotopy invariance} holds for the Euler class group of $R$ if $(I(0),\omega(0))=(I(1),\omega(1))$ in $\mathrm{E}^n(R)$ (see Section~\ref{ss:Segreclasshomomorphism}).  

An example due to Bhatwadekar \cite[Example 5.21]{dk} shows that homotopy invariance fails for Euler class in general.  Nevertheless, under suitable smoothness hypotheses, homotopy invariance is known to hold (see, e.g., \cite[Corollary 4.4(3)]{Mandal12} and \cite[Proposition 5.7]{Das03}).  In this section, we establish Theorem~\ref{homotopy} which improves those results.  To this end, we begin by recalling the following improvement of \cite[Theorem 3.8]{Bhatwadekar98} from \cite{d3}.


\begin{thm}[{\cite[Theorem 4.12]{d3}}]
	\label{nori}
Let $R$ be a regular ring of dimension $d$ which has essentially finite type over a field 
$k$ and infinite residue fields.  Let $n$ be an integer such that $2n\geq d+3$.  If $I\subset R[T]$ is an ideal of height $n$ such that $I=(F_1,\cdots,F_n)+(I^2T)$, then there
exist $G_1,\cdots,G_n\in I$ such that $I=(G_1,\cdots,G_n)$ where $G_i-F_i\in (I^2T)$ for $1\leq i\leq n$.
\end{thm}

\begin{rem}
The above result fails in the case where $d=n=2$, as shown in \cite[Example 3.15]{Bhatwadekar98}.  However, in that situation, the following weaker assertion holds (see also \cite[Theorem 3.16]{Bhatwadekar98}.
\end{rem}

\begin{thm}[{\cite[Corollary 3.5]{d3}}]
\label{nori2}
Let $R$ be a regular ring of dimension $2$.  If $I\subset R[T]$ is an ideal of height $2$ such that $\text{ht}\,I(0)\geq 2$ and $I=(F_1,F_2)+(I^2T)$, then there exist $H_1,H_2\in I$ and $\theta\in SL_2(R[T]/I)$  such that the following statements hold:
\begin{enumerate}[noitemsep,topsep=1pt]
\item $I=(H_1,H_2)$; 
\item $(\overline{F_1},\overline{F_2})\theta= (\overline{H_1},\overline{H_2})$, where bar denotes modulo $I^2$; and 
\item $H_i(0)=F_i(0)$ for $i=1,2$. 
\end{enumerate}
\end{thm}

We now prove the following result; the idea of this proof is similar to that of \cite[Lemma 4.3, Remark 4.6]{Bhatwadekar98}.

\begin{thm}
\label{homotopy}
Let $R$ be a ring of dimension $d\geq 2$ and suppose $n \geq 2$ is a positive integer. Assume further that: (1) if $d=n=2$, then $R$ is regular or (2) if $2n-3 \geq d \geq 3$, then $R$ is a smooth $k$-algebra with $k$ an infinite field. Let $I\subset R[T]$ be an ideal of height $n$ such that $I/I^2$ is generated by $n$ elements and both $I(0)$ and $I(1)$ are ideals of height $n$ in $R$. If $\omega:(R[T]/I)^n \twoheadrightarrow I/I^2$ is a surjection, then  $(I(0),\omega(0))=(I(1),\omega(1))$ in $\mathrm{E}^n(R)$.	
\end{thm}

\begin{proof}
Let $\omega$ be induced by $I=(f_1,\cdots,f_n)+I^2$. Then $\omega(0)$ is given by
$I(0)=(f_1(0),\cdots,f_n(0))+I(0)^2$. Similarly, $\omega(1)$ is given by
$I(1)=(f_1(1),\cdots,f_n(1))+I(1)^2$.

We first assume that $(I(0),\omega(0))\neq 0$ in $E^n(R)$.
Let $I\cap R=J$. Applying \cite[Corollary 2.4]{brs4} we can find an ideal $K$ of height $n$ 
such that $K+J=R$ and $I(0)\cap K=(a_1,\cdots,a_n)$ where
$a_i-f_i(0)\in I(0)^2$. Then $K= (a_1,\cdots,a_n)+K^2$. Write $\omega_K:(R/K)^n \twoheadrightarrow K/K^2$ for the surjection thus induced. We then have
\[
(I(0),\omega(0))+(K,\omega_K)=0 \text{ in } \mathrm{E}^n(R).
\]

Let $L=I\cap K[T]$. Then $f_1,\cdots,f_n$ and $a_1,\cdots,a_n$ will induce a set of generators of   
$L/L^2$, say,  $L=(g_1,\cdots,g_n)+L^2$. This means that $g_i-f_i\in I^2$
and $g_i-a_i\in K^2[T]$. Consequently,
$g_i(0)-f_i(0)\in I(0)^2$ and $g_i(0)-a_i\in K^2$. Since $a_i-f_i(0)\in I(0)^2$, it follows that 
$g_i(0)-a_i\in (I(0)\cap K)^2=L(0)^2$. Applying \cite[Remark 3.9]{Bhatwadekar98} we can find $h_1,\cdots,h_n\in L$ such that  $L=(h_1,\cdots,h_n)+(L^2T)$ with $h_i-g_i\in L^2$. \newline

\noindent {\bf Case 1.}
Assume that  $2n-3\geq d\geq 3$, then by \ref{nori} we have $L=(l_1,\cdots,l_n)$ such that $l_i-h_i\in (L^2T)$.  Then $L(1)=I(1)\cap K=(l_1(1),\cdots,l_n(1))$. As $l_i-f_i\in I^2$, we have $l_i(1)-f_i(1)\in I(1)^2$. Also, $l_i-a_i\in K^2[T]$, implying that $l_i(1)-a_i\in K^2$. Summing up, we simply have 
\[
(I(1),\omega(1))+(K,\omega_K)=0 \text{ in } E^n(R).
\]
Therefore, $(I(0),\omega(0))=(I(1),\omega(1))$ in $E^n(R)$ in this case. \newline

\noindent {\bf Case 2.} Assume that $d=n=2$. By \ref{nori2}, there exist $l_1,l_2\in L$ and $\theta\in SL_2(R[T]/L)$  such that (1) $L=(l_1,l_2)$; (2) $(\overline{h_1},\overline{h_2})\theta= (\overline{l_1},\overline{l_2})$,
where we use the bar to indicate that we are working modulo $L^2$. 

We have $L(1)=I(1)\cap K=(l_1(1),l_2(1))$. Note that $\theta(1)\in SL_2(R/I(1)\cap K)$ and as 
$\mathrm{dim}(R/I(1)\cap K)=0$, we have $\theta(1)\in E_2(R/I(1)\cap K)=E_2(R/I(1))\times E_2(R/K)$. 

As $I(1)+K=R$ and $I(1)\cap K=(l_1(1),l_2(1))$, we have, $I(1)=(l_1(1),l_2(1))+I(1)^2$ and
$K=(l_1(1),l_2(1))+K^2$. Writing the corresponding surjections as $\omega'_{(I(1))}$ and 
$\omega'_K$ we have $(I(1),\omega'_{(I(1))})+(K,\omega'_K)=0$.

It is then easy to see that $\omega(1)$ and $\omega'_{(I(1))}$ differ by a matrix in $E_2(R/I(1))$, whereas $\omega_K$ and $\omega'_K$ differ by a matrix in $E_2(R/K)$. Consequently, $(I(1),\omega(1))=(I(1),\omega'_{(I(1))})$ and $(K,\omega'_K)=(K,\omega_K)$ and we have $(I(1),\omega(1))+(K,\omega_K)=0$. Therefore, $(I(0),\omega(0))=(I(1),\omega(1))$ in $\mathrm{E}^2(R)$.
\end{proof}

\subsection{A group homomorphism}
\label{ss:dzhomomorphism}
Assume $R$ is a commutative ring of dimension $d\geq 2$.  Our goal is to construct a group homomorphism $Um_{d+1}(R)/E_{d+1}(R)$ to $\mathrm{E}^d(R)$, following \cite{Das15}, with some subtle modifications.  Another construction of this homomorphism is given (under rather general hypotheses) in \cite{vdKallen15}, but we choose this approach because it may be plugged directly into the proof of Proposition~\ref{prop:daszinna}.  We refer the reader to \cite{Das15} and \cite{vdKallen15} for further references and some history of the relevant homomorphism.  

Our construction proceeds by defining a function on a restricted class of unimodular rows, checking invariance under the action of the elementary matrix group, and then showing that the map is a group homomorphism.  To this end, let $(a_1,\cdots,a_d,a_{d+1})\in Um_{d+1}(R)$.  By adding suitable multiples of $a_{d+1}$ to $a_1,\cdots,a_d$ if necessary, we can  assume that  $\mathrm{ht}(a_1,\cdots,a_d)=d$.  Let $J_0$ be the ideal $\langle a_1,\cdots,a_d \rangle$ and $\omega_0:R^d\twoheadrightarrow J_0$ be the surjection induced by $(a_1,\cdots,a_d)$. As $a_{d+1}$ is a unit modulo $J_0$, the pair $(J_0,\overline{a_{d+1}}\omega_0)$ defines an element of $Ob'_{d}(R)$ 
(where "bar" means reduction modulo $J_0$) and has an associated class in $\mathrm{E}^d(R)$. We associate with $(a_1,\cdots,a_d,a_{d+1})$ the element $(J_0,\overline{a_{d+1}}\omega_0)\in \mathrm{E}^d(R)$.  The next proposition follows \cite[Section 4]{Das15}; for the sake of completeness, we spell out the details.

\begin{prop}
Assume $R$ is a commutative ring of dimension $d\geq 2$.  Assume furthermore that: (1) $d = 2$ and $R$ is regular, or (2) $d \geq 3$ and $R$ is a smooth $k$-algebra with $k$ an infinite field.  The association described above is invariant under the action of the elementary subgroup $E_{d+1}(R)$ and therefore yields a well-defined function: 
\[
\phi:Um_{d+1}(R)/E_{d+1}(R) \longrightarrow E^d(R).
\]
\end{prop}

\begin{proof}
Let $(a_1,\ldots,a_d,a_{d+1}),(b_1,\ldots,b_d,b_{d+1})\in Um_{d+1}(R)$ be such that 
$\mathrm{ht}(a_1,\ldots,a_d)=d=\mathrm{ht}(b_1,\ldots,b_d)$ and there exists 
$\sigma\in E_{d+1}(R)$ such that  
\[
(a_1,\ldots,a_d,a_{d+1})\sigma=(b_1,\ldots,b_d,b_{d+1}).
\]
Let $J_1= \langle b_1,\ldots,b_d \rangle$ and $\omega_1:R^d\twoheadrightarrow J_1$ be the surjection induced by $(b_1,\ldots,b_d)$.  It suffices to prove that $(J_0,\overline{a_{d+1}}\omega_0)=(J_1,\overline{b_{d+1}}\omega_1)$ in $\mathrm{E}^d(R)$ (where $\overline{b_{d+1}}$ is the reduction of $b_{d+1}$ modulo $J_1$).

As $\sigma$ is elementary, it is naively homotopic to the identity, i.e., we may find $\tau\in E_{d+1}(R[T])$ such that $\tau(0)=\text{id}$ and $\tau(1)=\sigma$. Let
\[
(a_1,\ldots,a_d,a_{d+1})\tau=(f_1(T),\ldots,f_d(T),f_{d+1}(T)).
\]
Then $(f_1(T),\ldots,f_d(T),f_{d+1}(T))\in Um_{d+1}(R[T])$.  As $J_0$ and $J_1$ are both of height $d$, one checks that the ideal $(f_1(T), \ldots, f_d(T),T(1-T))\subset R[T]$  has height $d+1$.  Since $(f_1(T), \ldots , f_d(T),f_{d+1}(T))$ is a unimodular row, we conclude that
\[
\mathrm{ht}(f_1(T), \ldots, f_d(T),(T(1-T)f_{d+1}(T))=d+1.
\]
Adding suitable multiples of $(T(1 -T))f_{d+1}(T) $ to $f_1(T),\ldots,f_d(T)$ if necessary, we can assume that $\mathrm{ht}(f_1(T),\ldots,f_d(T))=d$. Write $I=\langle f_1(T),\ldots,f_d(T) \rangle$.
Then we have   $I(0) = J_0$ and $I(1) = J_1$.  
Let $\omega^{\prime}:R[T]^d \twoheadrightarrow I$ denote the surjection induced by $f_1(T),\ldots,f_d(T)$ and write $\omega=\overline{f_{d+1}}(T)\omega^{\prime}$  (where $\overline{f_{d+1}}$ is the reduction of $f_d$ modulo $I$). Then, $(I(0),\omega(0))=(J_0,\overline{a_{d+1}}\omega_0)$ and
$(I(1),\omega(1))=(J_1,\widetilde{b_{d+1}}\omega_1)$ in $\mathrm{E}^d(R)$.
Appealing to Theorem~\ref{homotopy}, we conclude that 
$(J_0,\overline{a_{d+1}}\omega_0)=(J_1,\widetilde{b_{d+1}}\omega_1)$ in $\mathrm{E}^d(R)$ as required.
\end{proof}

\begin{prop}
\label{prop:dzhomomrevisited}
The function $\phi:Um_{d+1}(R)/E_{d+1}(R)\to E^d(R)$ constructed above is a group homomorphism.
\end{prop}

\begin{proof}
By \cite[3.3]{vdKallen02}, it suffices to prove that if $(x,a_1,\cdots,a_d)$ and $(y,a_1,\cdots,a_d)$ are unimodular with $x+y=1$, then 
\[
\phi(x,a_1,\cdots,a_d)+\phi(y,a_1,\cdots,a_d)=\phi(xy,a_1,\cdots,a_d).
\]
If any of $x,y$ or $xy$ is zero, then  $(a_1,\cdots,a_d)$ is unimodular and the above equality holds trivially; thus, we may assume that these elements are non-zero.

Let bar denote reduction modulo $\langle xy \rangle$.  Adding suitable multiples of $\overline{a_d}$ to $ \overline{a_1}, \cdots , \overline{a_{d-1}}$ we may assume that $\mathrm{ht}(\overline{a_1}, \cdots , \overline{a_{d-1}})\geq d-1$. It follows that $\mathrm{ht}(xy,a_1,\cdots,a_{d-1})\geq d$. Therefore, $\mathrm{ht}(x,a_1,\cdots,a_{d-1})\geq d$ and $\mathrm{ht}(y,a_1,\cdots,a_{d-1})\geq d.$

Set $J_1=(x,a_1,\cdots,a_{d-1})$ and  $J_2=(y,a_1,\cdots,a_{d-1})$ and let $\omega_{J_1}:R^d \twoheadrightarrow J_1$ and $\omega_{J_2}:R^d \twoheadrightarrow J_2$ be the corresponding surjections. Then $J_1+J_2=R$ and $J_1\cap J_2=(xy,a_1,\cdots,a_{d-1})$; let $\omega_{J_1\cap J_2}$ be the corresponding surjection.

Unwinding the definition of $\phi$, the following statements hold:
\begin{enumerate}[noitemsep,topsep=1pt]
	\item
	$\phi [x,a_1,\cdots,a_{d}]=(J_1,\overline{a_d}\omega_{J_1})$
	\item
	$\phi [y,a_1,\cdots,a_{d}]=(J_2,\overline{a_d}\omega_{J_2})$
	\item
	$\phi [xy,a_1,\cdots,a_{d}]=(J_1\cap J_2,\overline{a_d}\omega_{J_1\cap J_2})$
\end{enumerate}
(note that $\overline{a_d}$ refers to reduction modulo $J_1$ in the first line, reduction modulo $J_2$ in the second line and reduction modulo $J_1 \cap J_2$ in the third line).  As $x\equiv 1$ modulo $J_2$, and $y\equiv 1$ modulo $J_1$, we see that 
\[
(J_1\cap J_2,\overline{a_d} \omega_{J_1\cap J_2})=(J_1,\overline{a_d}\omega_{J_1})+(J_2,\overline{a_d}\omega_{J_2}),
\]
and the result follows. 
\end{proof}


\begin{footnotesize}
\bibliographystyle{alpha}
\bibliography{complete}

\begin{thebibliography}{AWW17}

\bibitem[ADF16]{ADF}
A.~Asok, B.~Doran, and J.~Fasel.
\newblock Smooth models of motivic spheres and the clutching construction.
\newblock {\em IMRN}, 6(1):1890--1925, 2016.

\bibitem[AF14a]{Asok12b}
A.~Asok and J.~Fasel.
\newblock Algebraic vector bundles on spheres.
\newblock {\em J. Topology}, 7(3):894--926, 2014.
\newblock doi:10.1112/jtopol/jtt046.

\bibitem[AF14b]{Asok12a}
A.~Asok and J.~Fasel.
\newblock A cohomological classification of vector bundles on smooth affine
  threefolds.
\newblock {\em Duke Math. J.}, 163(14):2561--2601, 2014.

\bibitem[AF15]{Asok12c}
A.~Asok and J.~Fasel.
\newblock Splitting vector bundles outside the stable range and homotopy theory
  of punctured affine spaces.
\newblock {\em J. Amer. Math. Soc.}, 28(4):1031--1062, 2015.

\bibitem[AF16]{AFComparison}
A.~Asok and J.~Fasel.
\newblock Comparing {E}uler classes.
\newblock {\em Quart. J. Math.}, 67:603--635, 2016.

\bibitem[AHW18]{AsokHoyoisWendtII}
A.~Asok, M.~Hoyois, and M.~Wendt.
\newblock Affine representability results in {${\mathbb A}^1$}-homotopy theory
  {II}: principal bundles and homogeneous spaces.
\newblock {\em Geom. Top.}, 22(2):1181--1225, 2018.

\bibitem[AHW19]{AHWOctonion}
A.~Asok, M.~Hoyois, and M.~Wendt.
\newblock Generically split octonion algebras and {$\mathbb A^1$}-homotopy
  theory.
\newblock {\em Algebra Number Theory}, 13(3):695--747, 2019.

\bibitem[AHW20]{AsokHoyoisWendtIII}
A.~Asok, M.~Hoyois, and M.~Wendt.
\newblock Affine representability results in {$\mathbb A^1$}-homotopy theory
  {III}: finite fields and complements.
\newblock {\em Algebr. Geom.}, 7(5):634--644, 2020.

\bibitem[Ark11]{Arkowitz}
M.~Arkowitz.
\newblock {\em Introduction to homotopy theory}.
\newblock Universitext. Springer, New York, 2011.

\bibitem[AWW17]{AsokWickelgrenWilliams}
A.~Asok, K.~Wickelgren, and T.B. Williams.
\newblock The simplicial suspension sequence in {${\mathbb A}^1$}-homotopy.
\newblock {\em Geom. Top.}, 21(4):2093--2160, 2017.

\bibitem[BM00]{Barge00}
J.~Barge and F.~Morel.
\newblock Groupe de {C}how des cycles orient\'es et classe d'{E}uler des
  fibr\'es vectoriels.
\newblock {\em C. R. Acad. Sci. Paris S\'er. I Math.}, 330(4):287--290, 2000.

\bibitem[Bor36]{Borsuk}
K.~Borsuk.
\newblock Sur les groupes des classes de transformations continues.
\newblock {\em C. R. Acad. Sci. Paris}, 202:1400--1403, 1936.

\bibitem[BS98]{Bhatwadekar98}
S.~M. Bhatwadekar and Raja Sridharan.
\newblock Projective generation of curves in polynomial extensions of an affine
  domain and a question of nori.
\newblock {\em Invent. Math.}, 133(1):161--192, 1998.

\bibitem[BS99]{Bhatwadekar99}
S.~M. Bhatwadekar and Raja Sridharan.
\newblock Zero cycles and the {E}uler class groups of smooth real affine
  varieties.
\newblock {\em Invent. Math.}, 136:287--322, 1999.

\bibitem[BS00]{Bhatwadekar00}
S.~M. Bhatwadekar and Raja Sridharan.
\newblock The {E}uler class group of a {N}oetherian ring.
\newblock {\em Compositio Math.}, 122(2):183--222, 2000.

\bibitem[BS02a]{brs4}
S.~M. Bhatwadekar and R.~Sridharan.
\newblock On {E}uler classes and stably free projective modules.
\newblock In {\em Algebra, arithmetic and geometry, {P}art {I}, {II} ({M}umbai,
  2000)}, volume~16 of {\em Tata Inst. Fund. Res. Stud. Math.}, pages 139--158.
  Tata Inst. Fund. Res., Bombay, 2002.

\bibitem[BS02b]{Bhatwadekar02}
S.~M. Bhatwadekar and Raja Sridharan.
\newblock On {E}uler classes and stably free projective modules.
\newblock In {\em Algebra, arithmetic and geometry, Part I, II (Mumbai, 2000)},
  volume~16 of {\em Tata Inst. Fund. Res. Stud. Math.}, pages 139--158. Tata
  Inst. Fund. Res., Bombay, 2002.

\bibitem[Das03]{Das03}
M.~K. Das.
\newblock The {E}uler class group of a polynomial algebra.
\newblock {\em J. Algebra}, 264:582--612, 2003.

\bibitem[Das13]{d3}
M.~K. Das.
\newblock On triviality of the {E}uler class group of a deleted neighbourhood
  of a smooth local scheme.
\newblock {\em Trans. Amer. Math. Soc.}, 365(7):3397--3411, 2013.

\bibitem[Das19]{das2019remarks}
M.~K. Das.
\newblock Remarks on {E}uler class groups and two conjectures.
\newblock {\em Preprint}, available at \url{https://arxiv.org/abs/1901.10686},
  2019.

\bibitem[DAZ15]{Das15}
M.~K. Das and Md. Ali~Zinna.
\newblock ``{S}trong'' {E}uler class of a stably free module of odd rank.
\newblock {\em J. Algebra}, 432:185--204, 2015.

\bibitem[DI05]{DuggerIsaksenCellular}
D.~Dugger and D.~C. Isaksen.
\newblock Motivic cell structures.
\newblock {\em Algebr. Geom. Topol.}, 5:615--652, 2005.

\bibitem[DK84]{DwyerKan}
W.~G. Dwyer and D.~M. Kan.
\newblock An obstruction theory for diagrams of simplicial sets.
\newblock {\em Nederl. Akad. Wetensch. Indag. Math.}, 46(2):139--146, 1984.

\bibitem[DK12]{dk}
M.~K. Das and M.~K. Keshari.
\newblock A question of {N}ori, {S}egre classes of ideals and other
  applications.
\newblock {\em J. Pure Appl. Algebra}, 216(10):2193--2203, 2012.

\bibitem[DR{\O}03]{DundasRondigsOstvaer}
B.~I. Dundas, O.~R{\"o}ndigs, and P.-A. {\O}stv{\ae}r.
\newblock Motivic functors.
\newblock {\em Doc. Math.}, 8:489--525 (electronic), 2003.

\bibitem[DS10]{Das10}
M.~K. Das and R.~Sridharan.
\newblock Good invariants for bad ideals.
\newblock {\em J. Algebra}, 323:3216--3229, 2010.

\bibitem[EE73]{Eisenbud73}
D.~Eisenbud and E.~G. Evans.
\newblock Generating modules efficiently: theorem from algebraic {$K$}-theory.
\newblock {\em J. Algebra}, 27:278--305, 1973.

\bibitem[Fas07]{Fasel07}
J.~Fasel.
\newblock The {C}how-{W}itt ring.
\newblock {\em Doc. Math.}, 12:275--312 (electronic), 2007.

\bibitem[Fas08]{Fasel08a}
J.~Fasel.
\newblock Groupes de {C}how-{W}itt.
\newblock {\em M\'em. Soc. Math. Fr. (N.S.)}, 113:viii+197, 2008.

\bibitem[Fas11]{Fasel08b}
J.~Fasel.
\newblock Some remarks on orbit sets of unimodular rows.
\newblock {\em Comment. Math. Helv.}, 86(1):13--39, 2011.

\bibitem[Fas16]{Fasel15b}
J.~Fasel.
\newblock On the number of generators of ideals in polynomial rings.
\newblock {\em Ann. Math. (2)}, 184(1):315--331, 2016.

\bibitem[FS09]{Fasel09c}
J.~Fasel and V.~Srinivas.
\newblock Chow-{W}itt groups and {G}rothendieck-{W}itt groups of regular
  schemes.
\newblock {\em Adv. Math.}, 221(1):302--329, 2009.

\bibitem[Hop33]{Hopf}
H.~Hopf.
\newblock Die {K}lassen der {A}bbildungen der {$n$}-dimensionalen {P}olyeder
  auf die {$n$}-dimensionale {S}ph\"are.
\newblock {\em Comment. Math. Helv.}, 5(1):39--54, 1933.

\bibitem[Hoy15]{HoyoisHM}
M.~Hoyois.
\newblock From algebraic cobordism to motivic cohomology.
\newblock {\em J. Reine Angew. Math.}, 702:173--226, 2015.

\bibitem[Jar15]{JardineLHT}
J.~F. Jardine.
\newblock {\em Local homotopy theory}.
\newblock Springer Monographs in Mathematics. Springer, New York, 2015.

\bibitem[KM82]{Mohan82}
N.~Mohan Kumar and M.~P. Murthy.
\newblock Algebraic cycles and vector bundles over affine three-folds.
\newblock {\em Ann. of Math. (2)}, 116(3):579--591, 1982.

\bibitem[MK77]{MohanKumarCI}
N.~Mohan~Kumar.
\newblock Complete intersections.
\newblock {\em J. Math. Kyoto Univ.}, 17(3):533--538, 1977.

\bibitem[MM19]{MandalMishra}
S.~Mandal and B.~Mishra.
\newblock The monoid structure on homotopy obstructions.
\newblock {\em J. Algebra}, 540:168--205, 2019.

\bibitem[Mor05]{MStable}
F.~Morel.
\newblock The stable {${\Bbb A}^1$}-connectivity theorems.
\newblock {\em $K$-Theory}, 35(1-2):1--68, 2005.

\bibitem[Mor12]{MField}
F.~Morel.
\newblock {\em ${\mathbb A}^1$-{A}lgebraic {T}opology over a {F}ield}, volume
  2052 of {\em Lecture Notes in Math.}
\newblock Springer, New York, 2012.

\bibitem[MS74]{Milnor74}
J.~W. Milnor and J.~D. Stasheff.
\newblock {\em Characteristic classes}, volume~76 of {\em Ann. of Math. Stud.}
\newblock Princeton University Press, Princeton, N.J., 1974.

\bibitem[MS76]{MurthySwan}
M.~P. Murthy and R.~G. Swan.
\newblock Vector bundles over affine surfaces.
\newblock {\em Invent. Math.}, 36:125--165, 1976.

\bibitem[Mur94]{Murthy94}
M.~P. Murthy.
\newblock Zero cycles and projective modules.
\newblock {\em Ann. of Math. (2)}, 140(2):405--434, 1994.

\bibitem[Mur99a]{MurthySurvey}
M.~P. Murthy.
\newblock A survey of obstruction theory for projective modules of top rank.
\newblock In {\em Algebra, {$K$}-theory, groups, and education ({N}ew {Y}ork,
  1997)}, volume 243 of {\em Contemp. Math.}, pages 153--174. Amer. Math. Soc.,
  Providence, RI, 1999.

\bibitem[Mur99b]{Murthy97}
M.~P. Murthy.
\newblock A survey of obstruction theory for projective modules of top rank.
\newblock In {\em Algebra, {$K$}-theory, groups and education ({N}ew {Y}ork,
  1997)}, volume 243 of {\em Contemp. Math.}, pages 153--174, Providence, RI,
  1999. Amer. Math. Soc.

\bibitem[MV99]{MV}
F.~Morel and V.~Voevodsky.
\newblock {${\bf A}\sp 1$}-homotopy theory of schemes.
\newblock {\em Inst. Hautes \'Etudes Sci. Publ. Math.}, 90:45--143 (2001),
  1999.

\bibitem[MY10]{Mandal10}
S.~Mandal and Y.~Yang.
\newblock Intersection theory of algebraic obstructions.
\newblock {\em J. Pure Appl. Algebra}, 214:2279--2293, 2010.

\bibitem[MY12]{Mandal12}
S.~Mandal and Y.~Yang.
\newblock Excision in algebraic obstruction theory.
\newblock {\em J. Pure Appl. Algebra}, 216:2159--2169, 2012.

\bibitem[Plu83]{Plumstead83}
B.~Plumstead.
\newblock The conjectures of {E}isenbud and {E}vans.
\newblock {\em Amer. J. Math.}, 105(6):1417--1433, 1983.

\bibitem[Sch17]{SchlichtingEuler}
M.~Schlichting.
\newblock Euler class groups and the homology of elementary and special linear
  groups.
\newblock {\em Adv. Math.}, 320:1--81, 2017.

\bibitem[Ser55]{Serre55}
J.-P. Serre.
\newblock Faisceaux alg\'ebriques coh\'erents ({F}rench).
\newblock {\em Ann. of Math. (2)}, 61:197--278, 1955.

\bibitem[Spa49]{SpanierCohomotopy}
E.~Spanier.
\newblock Borsuk's cohomotopy groups.
\newblock {\em Ann. of Math. (2)}, 50:203--245, 1949.

\bibitem[vdK83]{vdKallen83}
W.~van~der Kallen.
\newblock A group structure on certain orbit sets of unimodular rows.
\newblock {\em J. Algebra}, 82(2):363--397, 1983.

\bibitem[vdK89]{vdKallen89}
W.~van~der Kallen.
\newblock A module structure on certain orbit sets of unimodular rows.
\newblock {\em J. Pure Appl. Algebra}, 57(3):281--316, 1989.

\bibitem[vdK02]{vdKallen02}
W.~van~der Kallen.
\newblock From {M}ennicke symbols to {E}uler class groups.
\newblock In {\em Algebra, arithmetic and geometry, Part I, II (Mumbai, 2000)},
  volume~16 of {\em Tata Inst. Fund. Res. Stud. Math.}, pages 341--354. Tata
  Inst. Fund. Res., Bombay, 2002.

\bibitem[vdK15]{vdKallen15}
W.~van~der Kallen.
\newblock Extrapolating an {E}uler class.
\newblock {\em J. Algebra}, 434:65--71, 2015.

\bibitem[Wei84]{WeibelComplete}
C.~A. Weibel.
\newblock Complete intersection points on affine varieties.
\newblock {\em Comm. Algebra}, 12(23-24):3011--3051, 1984.

\bibitem[Wei89]{WeibelHAK}
C.~A. Weibel.
\newblock Homotopy algebraic {$K$}-theory.
\newblock In {\em Algebraic {$K$}-theory and algebraic number theory
  ({H}onolulu, {HI}, 1987)}, volume~83 of {\em Contemp. Math.}, pages 461--488.
  Amer. Math. Soc., Providence, RI, 1989.

\end{thebibliography}
\end{footnotesize}
\Addresses
\end{document}